\newtheorem{theorem}{Theorem}
\newtheorem{definition}[theorem]{Definition}
\newtheorem{lemma}[theorem]{Lemma}
\newtheorem{proposition}[theorem]{Proposition}
\newtheorem{example}[theorem]{Example}
\newtheorem{remark}[theorem]{Remark}
\newtheorem{corollary}[theorem]{Corollary}
\newtheorem*{theorem*}{Theorem}
\DeclareMathOperator{\ord}{ord}
\DeclareMathOperator{\rk}{rk}
\DeclareMathOperator{\syz}{syz}
\DeclareMathOperator{\chara}{char}
\DeclareMathOperator{\jet}{jet}
\DeclareMathOperator{\coker}{coker}
\DeclareMathOperator{\Mat}{Mat}
\DeclareMathOperator{\ima}{ima}
\numberwithin{theorem}{section}
\renewcommand{\mod}{\,\operatorname{mod}\,}
\title{ADE Classification of Hypersurface Singularities over Local Rings}
\author{Yotam Svoray}
\date{}
\begin{document}

\maketitle

\begin{abstract}
         In this paper we present an $ADE-$type classification of hypersurfaces of complete regular local rings based on their Cohen-Macaulay type. In order to preform this classification, we show how we can generalize the classical result regarding finite and countable Cohen-Macaulay type to larger cardinalities, depending on the cardinality of the residue field. 
\end{abstract}

  \tableofcontents

\section{Introduction}

The goal of this paper is to provide an analogue of the $ADE$ classification for hypersurfaces of complete regular local rings (including rings of mixed characteristic):

\begin{theorem*}[Generalized $ADE$ Classification]\label{thm:mainintro} \hfill

 Let $(R, \mathfrak{m}, \kappa)$ be a regular complete local ring of dimension $n\geq 1$ such that $\kappa$ is infinite with $\chara(\kappa)\neq 2$ and let $f \in \mathfrak{m}^2$. If $f$ has sparse Cohen-Macaulay type (see Definition~\ref{def:sparse}) then $f$ can be written as (see Definition~\ref{def:can_be_written}) either $ADE$ singularity over $R$, which are defined to be as follows (for $x_1, \dots, x_n$ a set of minimal generators of $\mathfrak{m}$):

\begin{center}
\hspace*{-0.8cm}
    \begin{tabular}{ | l | l | l |}
    \hline
    $ADE$  & Formula &  Note  \\ \hline \hline
    $A_k$ & $x_1^{k+1} + x_2^2 +\dots + x_n^2$ & $1 \leq k$  \\ \hline
    $D_k$ & $x_1\left(x_2^2  + x_1^{k-2}\right)+ x_3^2 +\dots + x_n^2$ & $4 \leq k$  \\ \hline
    $E_6$ & $x_1^3 + x_2^{4}+ x_3^2 +\dots + x_n^2$ &    \\ \hline
    $E_6^1$ & $x_1^3 + x_2^{4} + x_1^2 x_2^2+ x_3^2 +\dots + x_n^2$ & $\chara(\kappa) = 3$  \\ \hline
    $E_7$ & $x_1\left(x_1^2 +  x_2^{3}\right)+ x_3^2 +\dots + x_n^2$ &   \\ \hline
    $E_7^1$ & $x_1\left(x_1^2 +  x_2^{3} + x_1 x_2^2\right)+ x_3^2 +\dots + x_n^2$ & $\chara(\kappa) =3$  \\ \hline
    $E_8$ & $x_1^3 + x_2^{5}+ x_3^2 +\dots + x_n^2$ &   \\ \hline
    $E_8^1$ & $x_1^3 + x_2^{5} + x_1^2 x_2^3+ x_3^2 +\dots + x_n^2$ & $\chara(\kappa)=3$ \\ \hline
    $E_8^2$ & $x_1^3 + x_2^{5}  + x_1^2 x_2^2+ x_3^2 +\dots + x_n^2$ & $\chara(\kappa)=3$  \\ \hline
    $E_8^1$ & $x_1^3 + x_2^{5}+ x_1 x_2^4+ x_3^2 +\dots + x_n^2$ & $\chara(\kappa)=5$ \\ \hline 
    \end{tabular}
    \vspace{5mm}
\end{center}

or as one of the following (assuming $\kappa$ is uncountable as): 

\begin{center}
\hspace*{-0.8cm}
    \begin{tabular}{ | l | l |}
    \hline
    $A_\infty$ & $x_1^2+x_2^2 + \cdots +x_{n-1}^2$  \\ \hline
    $D_\infty$ & $x_1x_2^2+x_3^2+\cdots+ x_{n-1}^2$  \\ \hline
    \end{tabular}
    \vspace{5mm}
\end{center}
\end{theorem*}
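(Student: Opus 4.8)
The plan is to emulate the classical proof of the $ADE$ classification --- splitting (Morse) lemma, successive normalization of jets, finite determinacy, and exclusion of modular families --- but carried out over the complete regular local ring $R$, so that the residue characteristic $p = \chara\kappa$ may itself occur among the minimal generators $x_1,\dots,x_n$ of $\mathfrak m$, and feeding in the cardinality results on Cohen--Macaulay type established in the earlier sections. First I would apply the generalized splitting lemma to rewrite $f$, up to the equivalence of Definition~\ref{def:can_be_written}, as $g(x_1,\dots,x_m) + x_{m+1}^2 + \dots + x_n^2$ with $g\in\mathfrak m^3$ and $m$ the corank; this uses $\chara\kappa\neq 2$ and completeness of $R$ to solve for the coordinate change by successive approximation. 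If $m=0$, then $f$ is a unit times $\sum x_i^2$ and can be written as $A_1$. If $m=1$, then $g=g(x_1)\in\mathfrak m^3$ and completeness forces $g = x_1^{k+1}\cdot(\text{unit})$ with $k\geq 2$ --- giving $A_k$ after absorbing the unit --- or $g=0$, giving $A_\infty$; in the latter case the cardinality computation shows the type is sparse precisely when $\kappa$ is uncountable, as in the statement.

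For $m=2$ I would classify the $3$-jet $J^3g$, a binary cubic, up to $GL_2(\kappa)$ (lifted to $R$ by completeness): (a) three distinct linear factors, where $g$ is $3$-determined and $f$ can be written as $D_4$; (b) one double and one simple factor, normalized to $x_1^2x_2 + (\text{higher})$ and then, via a Weierstrass-type preparation in $x_2$, to $D_k$ for some $k\geq 5$, or to $D_\infty = x_1x_2^2$ if the tail vanishes identically (again sparse exactly for $\kappa$ uncountable); (c) a triple factor, normalized to $x_1^3 + (\text{higher})$, handled below; (d) the zero cubic, i.e.\ $g\in\mathfrak m^4$, in which case $f$ dominates an $X_9$/$T_{p,q}$-type singularity carrying a one-parameter family of pairwise non-isomorphic maximal Cohen--Macaulay modules, so by the lower bounds of the earlier sections its Cohen--Macaulay type has cardinality at least $|\kappa|$, contradicting sparseness --- so (d) cannot occur.

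In case (c), with $g = x_1^3 + (\text{higher})$, I would normalize jet by jet modulo the ideal generated by $x_1^3$: if the surviving quartic has a nonzero $x_2^4$ term we reach $E_6$; if only $x_1x_2^3$ survives we reach $E_7$; if both vanish we pass to the $5$-jet, reaching $E_8$ when $x_2^5$ survives, and otherwise $g$ takes the form $x_1^3 + x_1x_2^4\cdot(\cdots)$, a $\widetilde E_8$/$T_{3,q}$-adjacent singularity with moduli, excluded exactly as in (d). In characteristics $3$ and $5$ this normalization fails verbatim --- the terms $x_1^2x_2^2$, $x_1^2x_2^3$, $x_1x_2^4$ generally cannot be removed --- and one must instead identify the precise reduced forms as the $E_6^1, E_7^1, E_8^1, E_8^2$ of the table and check that these remain finitely determined, hence of sparse type. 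Finally, if $m\geq 3$ then $g\in\mathfrak m^3$ in at least three variables, so $f$ dominates a $P_8$-type (or worse) singularity with a modular family of maximal Cohen--Macaulay modules, incompatible with sparseness by the same bounds.

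The two genuinely delicate points are: establishing the splitting lemma, Weierstrass preparation, and finite determinacy over the complete regular local ring $R$ --- including the mixed-characteristic case --- rather than over a field, since these underpin every normalization above and are the business of the earlier sections; and the small-characteristic analysis in $\chara\kappa\in\{3,5\}$, where one must both pin down the surviving forms $E_k^j$ and verify they stay finitely (hence sparsely) determined. The converse bookkeeping in case (d) and in the corank $\geq 3$ case --- that each excluded branch genuinely fails sparseness --- also rests on the quantitative extension of the Buchweitz--Greuel--Schreyer lower bounds to arbitrary residue-field cardinality proved earlier in the paper.
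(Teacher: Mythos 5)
Your proposal follows essentially the same route as the paper's proof: the splitting lemma over the complete ring, reduction by corank (rank at least $n-2$ forced by the cardinality bound), classification of the $3$-jet in the corank-two case leading to the $D$ and $E$ normal forms together with the extra characteristic $3$ and $5$ forms, and exclusion of the residual cases (corank $\geq 3$, higher-order tails, and the beyond-$E_8$ locus $f \in \langle x_1, x_2^2\rangle^3$) via the quantitative Buchweitz--Greuel--Schreyer bound, i.e.\ producing $|\kappa|$-many ideals $I$ with $f \in I^2$, which is exactly the paper's sparse-type obstruction. The only cosmetic difference is that you phrase these exclusions through adjacency to modular singularities ($X_9$, $P_8$, $T_{3,q}$) carrying families of MCM modules, whereas the paper realizes the families concretely as explicit $|\kappa|$-parameter families of ideals (Proposition~\ref{prop:ideal}, Proposition~\ref{prop:ord}, Proposition~\ref{prop:n-2}), which is the same mechanism you are citing.
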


The study of $ADE$ singularities over fields goes back to Arnold's $ADE$ classification of simple singularities over the complex numbers in~\cite{varchenko1985singularities}, which was based on the work of Artin in~\cite{artin1966isolated, artin11977coverings} and of Du Val in~\cite{du1934isolatedi, du1934isolatedii, du1934isolatediii} on rational surface singularities. Based on Arnold's work, Buchweitz, Greuel, Schreyer, and Kn\"{o}rrer in~\cite{buchweitz1987cohen, knorrer1986cohen} proved that the $ADE$ singularities over $\mathbb{C}$ are exactly the hypersurfaces of finite Cohen-Macaulay type in addition to the classification of countable Cohen-Macaulay type hypersurfaces as $A_\infty$ and $D_\infty$. Later, Greuel and Kr\"{o}ning in~\cite{greuel1990simple} and Greuel and Nguyen in~\cite{greuel2016right} proved an analogues result of an $ADE$ classification in the positive characteristic case (based on the concept of finite deformation type), which unlike the characteristic zero case, differs based on if we look at right or contact equivalence. These classifications are based on computations and analysis of hypersurface singularities using tools such as the Milnor and Tjurina numbers, equivalences, determinacy, etc. For more about the topology and algebra of complex hypersurfaces, see~\cite{greuel2007introduction, milnor2016singular, varchenko1985singularities}.  \\

When moving beyond the equi-characteristic case (power series over a field), we encounter problems that arise due to constraints of the ring. Specifically, in the mixed characteristic case, we have a problem that there are elements that are very rigid under automorphisms. This causes the groups of automorphisms to be much smaller compared to its field counterpart. For example, if $\text{Witt}(\kappa)[[x]]$ is the ring of power series in one variable over the ring of Witt vectors over an algebraically closed field $\kappa$ of characteristic $p$, then $p$ is the uniformizer of $\text{Witt}(\kappa)$ and thus stable under ring automorphisms. For a deeper review of Witt vectors, see~\cite{rabinoff2014theory} or Chapter II, Section 6 of~\cite{serre1979local}. Yet, there are still elements we wish to view as "the same" algebraically. For example, similar to Example 1.1. in~\cite{carvajal2019covers}, over $\text{Witt}(\kappa)[[x]]$ we see that $x^3+p^2$ and $x^2+p^3$ are similar even though their corresponding hypersurface rings are not isomorphic, as otherwise$\mod p$ they would be isomorphic over the field $\kappa$, which is impossible. For this goal, we introduce the notion of "can be written as", which we denote $\looparrowright$, with respect to which we preform our $ADE$ classification. In addition, it provides for us a characteristic free approach for the classification, that is, no matter if the ring is equi-characteristic or mixed-characteristic.\\ 

In most research on Cohen-Macaulay type of a singularity (as mentioned above), the discussion is limited to the cases where the Cohen-Macaulay type is at most countable. Yet, we present a new notion, which we call "sparse Cohen-Macaulay type", which compares the cardinality with that of the residue field. We show that many results on finite and countable Cohen-Macaulay type can be generalized to sparse Cohen-Macaulay type. For example, in Section~\ref{Sec:Uncountable} we show that Huneke-Leuschke-Takahashi theorem (first proposed by Schreyer for complex analytic local rings in~\cite{schreyer2006finite} and later proved by Huneke and Leuschke in~\cite{huneke2003local} for excellent rings and by Takahashi in~\cite{takahashi2007uncountably} for Cohen-Macaulay rings) is true in the sparse case, that is, if a Cohen-Macaulay local ring has Sparse Cohen-Macaulay type then it has at most a one dimensional singular locus. \\

In the past there has been some work on singularities and specifically $ADE$ singularities in mixed characteristic and over local rings. In the case of the ring of formal power series over mixed characteristic DVR, the $ADE$ singularities have been studied by Leuschke in~\cite{leuschke2000finite}, in which they proved that the $ADE$ singularities are indeed of Finite Cohen-Macaulay type. In the surface case (i.e. the ring of formal power series over a mixed characteristic DVR), $ADE$ singularities (assuming $\chara (\kappa) >5$) have been studied in~\cite{carvajal2019covers}. In that paper, inspired by the work of Lipman in~\cite{lipman1969rational}, the authors proved that the ADE surface singularities are exactly the rational surface singularities, similar to the results by Artin and Du Val. In addition, the Weyl monodromy of rational surface singularities in the mixed characteristic case has been studied in~\cite{kountouridis2023simple}.  Isolated singularities over local rings in the mixed characteristic setting have been studied in~\cite{li2024regular} (for local graded isolated singularities) and in~\cite{heitmann1994completions}. Mixed characteristic rings of finite Cohen-Macaulay type (that are not hypersurfaces) have been studied in the past, for example, in~\cite{puthenpurakal2016two}. Jeffries and Hochster in~\cite{hochster2021jacobian} presented a Jacobian criterion over unramified mixed characteristic, and this work was later generalized by KC in~\cite{kc2024singular} to the ramified case. \\

 Throughout this text, we assume that $(R, \mathfrak{m}, \kappa)$ is a local ring with maximal ideal $\mathfrak{m}$ such that $\kappa=\frac{R}{\mathfrak{m}}$ is infinite and $\chara(\kappa)\neq 2$ (with potentially additional assumption on $R$ added based upon the specific result stated). Given some $f \in R$, we define \textbf{the order} of $f$  to be $\ord(f) = \max\{ r \colon f \in \mathfrak{m}^r \}$. Regrading the $ADE$ singularities, if $\chara(\kappa) = 3$ (resp. $\chara(\kappa)=5$) we denote $E_k$ by $E_k^0$ for $k=6,7,8$ (resp. $E_8$ by $E_8^0 $). Given a homorophism $\Phi$, we denote its kernel by $\ker(\Phi)$, its cokernel by $\coker(\Phi)$, and its image by $\ima(\Phi)$.  When discussing infinite sums, we use multi-index notation (as in~\cite{greuel2007introduction}): If $\alpha = (\alpha_1, \dots, \alpha_n)$ is a tuple of non negative integers and $\underline{x}=(x_1, \dots, x_n)$ is a tuple of ring elements, we denote $\underline{x}^\alpha =  x_1^{\alpha_1} \cdots x_n^{\alpha_n}$ and we denote $\sum_{\underline{i}} a_{\underline{i}} \underline{x}^{\underline{i}} = \sum_{i_1, \dots, i_n \geq 0} c_{i_1, \dots, i_n} x_1^{i_1} \cdots x_n^{i_n}$, where $|\underline{i}|=i_1+\cdots +i_n$. \\

 \textbf{Acknowledgments.} This work was done as part of the Author's PhD thesis under the guidance of Karl Schwede, and we wish to thank him for his guidance, help, and support. We wish to thank Gert-Martin Greuel, Srikanth Iyengar, Graham Leuschke, Eugenii Shustin, and Tim Tribone for productive mathematical discussions and their inputs on some of the ideas. The author was partially support by NSF grant DMS-2101800.

\section{Uncountable Cohen-Macaulay Type}\label{Sec:Uncountable}

In this section we generalize the notions of finite and countable Cohen-Macaulay type to arbitrary cardinalities, based upon the cardinality of the residue field. For more information on finite and countable Cohen-Macaulay type, see~\cite{yoshino1990maximal, leuschke2012cohen}. 

\begin{definition}\label{def:cm_type}
    Let $(R, \mathfrak{m}, \kappa)$ be a local ring. We denote by $\mathcal{MCM}(R)$ the set of isomorphism classes $[M]$ of maximal indecomposable Cohen–Macaulay modules $M$ over $R$. If $|\mathcal{MCM}(R)|=\lambda$ we say that $R$ has \textbf{$\lambda$-Cohen-Macaulay type}.  
\end{definition}

The following lemma shows us that indeed Definition~\ref{def:cm_type} is well defined, and gives us an upper bound on the possible Cohen-Macaulay type. 

\begin{lemma}\label{lem:cardinality_cm}
    For every finite dimensional Noetherian local ring $(R, \mathfrak{m}, \kappa)$ (where $\kappa$ is infinite) we have that $\mathcal{MCM}(R)$ is a set of cardinality at most $|R|$. In addition, $|R| \leq 2^{|\kappa|}$.  
\end{lemma}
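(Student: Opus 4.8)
The plan is to treat the two assertions separately, as each is essentially a cardinality count sitting on top of standard commutative algebra. Throughout I would first note that $R$ is infinite, since its quotient $\kappa$ is; this is what lets all the cardinal arithmetic below collapse finite powers and countable products of $|R|$ (and of $|\kappa|$) back down.

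For the bound $|\mathcal{MCM}(R)|\le |R|$, the key observation is that a maximal Cohen--Macaulay module is in particular a finitely generated $R$-module, so it suffices to bound the number of isomorphism classes of finitely generated $R$-modules. Every such module is isomorphic to $R^n/K$ for some $n\in\mathbb{N}$ and some submodule $K\subseteq R^n$; since $R$ is Noetherian, $K$ is finitely generated and hence determined by a finite tuple of elements of $R^n$. The number of such finite tuples is at most $\sum_{k\ge 0}|R^n|^{k}=1+\aleph_0\cdot |R|=|R|$, so there are at most $|R|$ finitely generated submodules of $R^n$, and therefore at most $|R|$ isomorphism classes of $n$-generated modules. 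Taking the union over $n\in\mathbb{N}$ gives at most $\aleph_0\cdot|R|=|R|$ isomorphism classes of finitely generated $R$-modules; in particular $\mathcal{MCM}(R)$ is a genuine set and $|\mathcal{MCM}(R)|\le |R|$.

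For the bound $|R|\le 2^{|\kappa|}$, I would pass to the $\mathfrak{m}$-adic completion $\hat{R}=\varprojlim_n R/\mathfrak{m}^n$. Since $R$ is Noetherian local, Krull's intersection theorem gives $\bigcap_n\mathfrak{m}^n=0$, so the canonical map $R\hookrightarrow\hat{R}$ is injective and $|R|\le|\hat{R}|\le\prod_{n\ge 1}|R/\mathfrak{m}^n|$. Each $R/\mathfrak{m}^n$ admits a finite filtration whose successive quotients are the $\kappa$-vector spaces $\mathfrak{m}^i/\mathfrak{m}^{i+1}$, and each of these is finite-dimensional because $R$ is Noetherian (it is a finitely generated $R$-module killed by $\mathfrak{m}$). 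Hence $|R/\mathfrak{m}^n|\le|\kappa|^{d_n}$ for some finite $d_n$, which equals $|\kappa|$ as $\kappa$ is infinite. Therefore $|R|\le|\kappa|^{\aleph_0}\le(2^{|\kappa|})^{\aleph_0}=2^{|\kappa|\cdot\aleph_0}=2^{|\kappa|}$.

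The argument uses no deep input, so the points requiring care are purely set-theoretic: it is Noetherianity that permits replacing ``all submodules of $R^n$'' (which would only give the weaker $2^{|R|}$) by ``finitely generated submodules'' to obtain the sharp bound $|R|$, and one must invoke Krull's intersection theorem to know that $R$ embeds in its completion before estimating $|\hat{R}|$. I would also remark that finite Krull dimension is not actually needed for this lemma; it is part of the standing hypotheses and is harmless here.
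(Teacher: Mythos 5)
Your proof is correct and follows essentially the same route as the paper: you bound $|\mathcal{MCM}(R)|$ by counting quotients $R^n/K$ with $K$ finitely generated (Noetherianity being exactly what brings the count down from $2^{|R|}$ to $|R|$), and you bound $|R|$ by Krull's intersection theorem together with the finite-dimensionality of the layers $\mathfrak{m}^i/\mathfrak{m}^{i+1}$ over $\kappa$. The only cosmetic difference is that you embed $R$ into the completion $\hat{R}\subseteq\prod_n R/\mathfrak{m}^n$, whereas the paper phrases the same idea as an injection related to $\bigoplus_N \mathfrak{m}^N/\mathfrak{m}^{N+1}$; your version is, if anything, the more carefully stated one, and your remark that finite Krull dimension is not needed is also accurate.
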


\begin{proof}
    Given some $[M] \in \mathcal{MCM}(R)$, then $M$ must be finitely generated, and so there exists some surjection $\varphi\colon R^{\oplus r} \to M$ for some $r>0$. Therefore we have that $\frac{R^{\oplus r}}{\ker \varphi} \cong M$. Thus, $\mathcal{MCM}(R)$ can be embedded inside the set $\{(R^{\oplus r}, N) \colon N \subset R^{\oplus r} \text{ submodule }\}$, which has cardinality at most $|R|$. Now, since $\dim(R)<\infty$, we can conclude that $\mathfrak{m}$ is a finitely generated ideal. Therefore, for every $i$ we have that $\frac{\mathfrak{m}^N}{\mathfrak{m}^{N+1}}$ is a finite dimensional vector space over $\kappa$, and thus has cardinality (at most) $|\kappa|$. Yet, the map $R \to \bigoplus_N \frac{\mathfrak{m}^N}{\mathfrak{m}^{N+1}}$ is an injection (as $\cap_{i=1}^\infty \mathfrak{m}^i =\{0\}$ by Krull's intersection theorem), and so we can conclude that $|R| \leq 2^{|\kappa|}$. 
\end{proof}

\begin{definition}\label{def:sparse}
We say that $(R, \mathfrak{m},\kappa)$ has a \textbf{sparse Cohen-Macaulay type} if $|\mathcal{MCM}(R)| < |\kappa|$. 
\end{definition}

\begin{remark}
\textup{Note that when $R$ does not have a sparse Cohen-Macaulay type, then by Lemma~\ref{lem:cardinality_cm} we must have that $ |\kappa| \leq |\mathcal{MCM}(R)| \leq 2^{|\kappa|}$. The question if there are any cardinalities $|\kappa| < \lambda < 2^{|\kappa|}$ (known as the generalized continuum hypothesis) is independent of the ZFC axiomatic system. For more information, see Chapter 1 of~\cite{fraisse2000theory}. }
\end{remark}

We now present a sparse version of the Huneke-Leuschke-Takahashi theorem. Recall that $\text{Sing}(R)=\{\mathfrak{p} \in \text{Spec}(R) \colon R_\mathfrak{p} \text{ is not regular}\}$. Our proof is inspired by Section 2 of~\cite{takahashi2007uncountably} and by Section 1 of~\cite{huneke2003local}, while the general idea was inspired by Remark 14.4 in~\cite{leuschke2012cohen}.

\begin{theorem}[Huneke-Leuschke-Takahashi]\label{thm:Huneke-Leuschke-Takahashi}
    Let $(R, \mathfrak{m}, \kappa)$ be a Cohen-Macaulay local ring. If $R$ has a sparse Cohen-Macaulay type then $\dim\left(\frac{R}{\mathfrak{p}}\right) \leq 1$ for every $\mathfrak{p} \in \text{Sing}(R)$. 
\end{theorem}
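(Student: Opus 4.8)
The plan is to argue by contraposition: assuming there exists a prime $\mathfrak{p} \in \text{Sing}(R)$ with $\dim(R/\mathfrak{p}) \geq 2$, I will construct a family of pairwise non-isomorphic indecomposable maximal Cohen-Macaulay modules indexed by a set of size $|\kappa|$ (or at least $\geq |\kathbb{\kappa}|$), contradicting sparseness. The natural source of such modules is the classical trick used by Huneke-Leuschke and Takahashi: start from the fact that $R_\mathfrak{p}$ is not regular, so over the local ring $R_\mathfrak{p}$ there is a non-free maximal Cohen-Macaulay module, equivalently an infinite minimal free resolution and hence a module with nontrivial Ext, which produces nonsplit self-extensions. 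Concretely, since $R/\mathfrak{p}$ has dimension $\geq 2$, after going modulo a maximal regular sequence inside $\mathfrak{p}$ one reduces (using that $R$ is Cohen-Macaulay, so depth and dimension of quotients behave well) to the situation of a one-dimensional or two-dimensional base over which one can write down explicit extension modules parametrized by $\mathbb{P}^1(\kappa)$ or by elements of $\kappa$ itself.

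The key steps, in order: (1) Reduce to the case where $\mathfrak{p}$ is a prime with $\dim(R/\mathfrak{p}) = 2$ and, after killing part of a system of parameters, to a two-dimensional Cohen-Macaulay local ring having a non-isolated singularity along a one-dimensional prime; here one must check that the reduction preserves both the Cohen-Macaulay property and the failure of regularity at the relevant prime, and that MCM modules lift appropriately (via the standard correspondence between MCM $R$-modules and MCM $R/\underline{x}$-modules for a regular sequence $\underline{x}$, using the Auslander-Buchsbaum formula). (2) On the reduced ring, use that $R_\mathfrak{p}$ is not a field/DVR to find a module $M$ with $\text{Ext}^1_R(M,M) \neq 0$ after localizing, and then globalize: build an explicit $|\kappa|$-indexed family $\{M_\lambda\}$ of extensions of $M$ by $M$ (or of $R/\mathfrak{p}$ by a syzygy) whose isomorphism classes over $R_\mathfrak{p}$ are already distinct — this is where the cardinality $|\kappa|$ enters, since the extension classes live in a $\kappa$-vector space of positive dimension and one can separate them up to the action of $\text{Aut}(M) \times \text{Aut}(M)$, which is "small" (its orbits have size bounded by $|R_\mathfrak{p}|$-many parameters but one arranges genuinely $|\kappa|$ distinct orbits). (3) Replace each $M_\lambda$ by a maximal Cohen-Macaulay approximation or a high syzygy to make it MCM without collapsing the distinctness, and pass to an indecomposable direct summand, using Krull-Schmidt over the complete (or Henselian) ring to conclude that infinitely — indeed $|\kappa|$-many — distinct indecomposable MCM modules occur. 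This contradicts $|\mathcal{MCM}(R)| < |\kappa|$.

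The main obstacle I anticipate is step (2): producing genuinely $|\kappa|$-many \emph{non-isomorphic} modules rather than merely infinitely many. The subtlety is that two different extension classes can give isomorphic modules, and the automorphism groups acting on $\text{Ext}^1$ can be large; controlling this requires a careful choice — typically one localizes at $\mathfrak{p}$, where $R_\mathfrak{p}$ has small residue field relations, or one uses a discrete invariant (like the isomorphism type of $M_\lambda/\mathfrak{p}M_\lambda$ or a Fitting ideal) that varies with $\lambda$. A clean way, following Takahashi, is to take $M = \Omega^d(R/\mathfrak{p})$ a high syzygy and consider the pushout extensions $0 \to M \to M_\lambda \to R/(\mathfrak{p} + (y - \lambda z)) \to 0$ for varying $\lambda \in \kappa$ where $y,z$ are part of a parameter system; the support/annihilator of $M_\lambda$ then pins down $\lambda$ up to finitely many choices, giving $|\kappa|$ distinct classes. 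The remaining steps — the regular-sequence reduction and the indecomposable-summand extraction — are standard commutative algebra and I would cite the relevant machinery from the sources the paper already invokes (\cite{huneke2003local, takahashi2007uncountably, leuschke2012cohen}).
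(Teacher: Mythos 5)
Your overall strategy is genuinely different from the paper's, and as sketched it has a real gap at exactly the point you flag as the obstacle. The distinctness mechanism in your step (2) does not work: if $M$ is a high syzygy (or any MCM module over a Cohen--Macaulay local ring, e.g.\ when $R$ is a domain), then $M$ typically has full support and zero annihilator, and for an extension $0 \to M \to M_\lambda \to R/(\mathfrak{p}+(y-\lambda z)) \to 0$ one only gets $\operatorname{Supp}(M_\lambda)=\operatorname{Supp}(M)\cup V(\mathfrak{p}+(y-\lambda z))=\operatorname{Spec}(R)$ and $\operatorname{Ann}(M_\lambda)\subseteq \operatorname{Ann}(M)=0$; neither the support nor the annihilator remembers $\lambda$, so "pins down $\lambda$ up to finitely many choices" is unjustified and false in general. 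One would need a finer invariant (something like $\operatorname{Ann}\operatorname{Tor}_1$), and establishing that it separates $|\kappa|$-many classes is the hard content, not a routine citation. The surrounding machinery is also not available under the theorem's hypotheses: $R$ is only assumed Cohen--Macaulay local, so Krull--Remak--Schmidt (needs Henselian), MCM approximations (need a canonical module), and passing through the completion (modules over $\hat{R}$ do not descend to $R$ without excellence-type hypotheses) cannot be invoked as stated; moreover high syzygies and approximations can identify previously distinct modules, which is precisely the "without collapsing distinctness" claim that is asserted but not proved. Finally, the regular-sequence reduction transfers you to $\mathcal{MCM}(R/(\underline{x}))$, whose members are not MCM over $R$, so carrying a large family back up to $R$ needs a further non-collapsing argument.

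For comparison, the paper never constructs a family of modules at all; the logic runs the other way. By Takahashi's Proposition 2.3, every $\mathfrak{p}\in\operatorname{Sing}(R)$ arises as $\operatorname{Ann}\operatorname{Tor}_1(M,M)$ for some $[M]\in\mathcal{MCM}(R)$, so sparseness gives $|\operatorname{Sing}(R)|<|\kappa|$ directly; then a cardinal prime avoidance lemma (proved with a Vandermonde argument over a set of unit lifts of $\kappa^\times$) shows that any specialization-closed set of primes of cardinality $<|\kappa|$ satisfies $\dim(R/\mathfrak{p})\leq 1$, because one can choose $f\in\mathfrak{m}$ avoiding all non-maximal primes of the set, forcing $\mathfrak{p}+\langle f\rangle$ to be $\mathfrak{m}$-primary. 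If you want to salvage your approach, the realistic fix is to replace your extension family by the prime-counting observation implicit in that lemma (a two-dimensional singular prime forces at least $|\kappa|$ singular primes above it) combined with the $\operatorname{Ann}\operatorname{Tor}_1$ correspondence, which is essentially the paper's proof.
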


We start with a cardinal version of the prime avoidance theorem. It is inspired by the proof of Proposition 2.5 in~\cite{sharp1985baire}. For more information about different infinite versions of the prime avoidance theorem, see~\cite{chen2021infinite}. 

\begin{lemma}[Cardinal Prime Avoidance]\label{lem:prime_avoidance}
    Let $(R, \mathfrak{m}, \kappa)$ be a local ring, let $\mathfrak{a} \subset R$ be a finitely generated ideal, and let $S$ be a set of ideals such that $|S| < |\kappa|$. If $\mathfrak{a} \subseteq \bigcup_{I \in S} I$ then there exists some $\mathfrak{b} \in S$ such that $\mathfrak{a} \subseteq \mathfrak{b}$.  
\end{lemma}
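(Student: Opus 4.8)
The plan is to reduce the statement to a purely linear-algebraic fact: a nonzero vector space over an infinite field $\kappa$ is never the union of a family of proper subspaces indexed by a set of cardinality strictly less than $|\kappa|$. First I would pass from ideals to modules. Suppose toward a contradiction that $\mathfrak{a}\not\subseteq I$ for every $I\in S$; we may assume $S\neq\emptyset$ and $\mathfrak{a}\neq 0$, since otherwise the conclusion is immediate. Setting $W_I:=\mathfrak{a}\cap I$, each $W_I$ is a proper $R$-submodule of the finitely generated module $\mathfrak{a}$, and the hypothesis $\mathfrak{a}\subseteq\bigcup_{I\in S}I$ becomes $\mathfrak{a}=\bigcup_{I\in S}W_I$. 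So it suffices to show that a nonzero finitely generated $R$-module $M$ is never a union of a family of proper submodules indexed by a set of cardinality $<|\kappa|$.

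Next I would pass to the residue field. For any proper submodule $W\subsetneq M$ with $M$ finitely generated, applying Nakayama's lemma to $M/W$ shows $W+\mathfrak{m}M\subsetneq M$, so the image $\overline{W}$ of $W$ in the $\kappa$-vector space $V:=M/\mathfrak{m}M$ is a proper subspace; and since $M$ is finitely generated and nonzero, $V$ is finite-dimensional and nonzero. If $M=\bigcup_{i}W_i$ then $V=\bigcup_i\overline{W_i}$, so everything reduces to the vector-space claim above in the finite-dimensional case.

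To prove the vector-space claim I would actually establish the slightly more flexible affine version — a finite-dimensional $\kappa$-vector space $V$ is not a union of $<|\kappa|$ proper affine subspaces — by induction on $d=\dim V$, the case $d=0$ being trivial. For the inductive step, fix a linear hyperplane $H\subsetneq V$ and a vector $v_0\notin H$, and write $V$ as the disjoint union of the parallel affine hyperplanes $H_\lambda:=H+\lambda v_0$, $\lambda\in\kappa$. A single proper affine subspace can contain at most one $H_\lambda$ (two distinct parallel affine hyperplanes already affinely span all of $V$), so among the $<|\kappa|$ covering subspaces only $<|\kappa|$ values of $\lambda$ are "blocked"; choosing an unblocked $\lambda^\ast$, every covering subspace meets $H_{\lambda^\ast}$ in a proper affine subspace of $H_{\lambda^\ast}$, and these cover $H_{\lambda^\ast}$, contradicting the induction hypothesis since $\dim H_{\lambda^\ast}=d-1$. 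The main obstacle is precisely this vector-space lemma: a crude cardinality count is useless, since for infinite $\kappa$ the union of $|\kappa|$ distinct lines in $\kappa^2$ is already all of $\kappa^2$, so the slicing-and-pigeonhole device — together with the care needed to move between affine and linear subspaces and to accommodate an infinite index set — is the real content, while the passage through Nakayama's lemma is routine.
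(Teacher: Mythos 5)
Your argument is correct, but it follows a genuinely different route from the paper. The paper argues directly at the level of the ring: it fixes a set of lifts $\Lambda$ of $\kappa^\times$ (so that differences of distinct lifts are units), forms the ``generic'' combinations $b_u=\sum_{i=0}^{k-1}u^i a_{i+1}$ of the generators of $\mathfrak{a}$, uses the pigeonhole principle ($|\Lambda|=|\kappa|>|S|$) to find $k$ distinct values of $u$ whose $b_u$ all land in a single $\mathfrak{b}\in S$, and then inverts the corresponding Vandermonde matrix to recover $a_1,\dots,a_k\in\mathfrak{b}$ --- the standard ``generic linear combination'' style of infinite prime avoidance. You instead reduce modulo $\mathfrak{m}\mathfrak{a}$: Nakayama turns each $\mathfrak{a}\cap I$ into a proper $\kappa$-subspace of the finite-dimensional space $\mathfrak{a}/\mathfrak{m}\mathfrak{a}$, and the lemma becomes the statement that a nonzero finite-dimensional vector space over $\kappa$ is not the union of fewer than $|\kappa|$ proper (affine) subspaces, which you prove by induction on dimension via slicing by parallel hyperplanes plus the same pigeonhole count. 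Both proofs use exactly the same three hypotheses (locality, finite generation of $\mathfrak{a}$, $|S|<|\kappa|$), just in different places: the paper uses locality for unit differences of lifts and finite generation for the size of the Vandermonde matrix, while you use both through Nakayama. The paper's proof is shorter and stays inside $R$; yours isolates a clean, reusable linear-algebra fact and makes transparent where the cardinality hypothesis enters, at the cost of an extra reduction step. One cosmetic point: when $S=\emptyset$ it is the hypothesis $\mathfrak{a}\subseteq\bigcup_{I\in S}I$ that fails (as $0\in\mathfrak{a}$), so the lemma holds vacuously rather than ``the conclusion being immediate''; this does not affect the argument.
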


\begin{proof}
    Let $\Lambda \subset R$ to be a complete set of lifts from $\kappa^\times$. That is, for every $c \in \kappa^\times$ there exists a unique $\tilde{c} \in \Lambda$ such that $\tilde{c} \mod \mathfrak{m} = c$. Note that for every $u \neq v \in \Lambda$ we have that $u-v$ is a unit in $R$. Let $\mathfrak{a}=\langle a_1, \dots, a_k \rangle$. For every $u \in \Lambda$, define $b_u = \sum_{i=0}^{k-1} u^i a_{i+1} \in \mathfrak{a}$. Note that $u \mapsto y_u$ defines a map $\Lambda \to \bigcup_{I \in S} I$. This induces a map $\Lambda \to S$ by sending $u$ to some $I_u$ such that $y_u \in I_u$. Since $|\Lambda|=|\kappa|>|S|$ then by the pigeonhole principle, there exists some $\Lambda_1 \subseteq \Lambda$ and some $\mathfrak{b} \in S$ such that $|\Lambda_1|=|\kappa|$ and $y_u \in \mathfrak{b}$ for every $u \in \Lambda_1$. Thus, there exists some $u_1, \dots, u_k \in \Lambda$ such that $b_{u_1}, \dots, b_{u_k} \in \mathfrak{b}$. If we look at the matrix $P \in \text{Mat}_k(R)$ defined by $[P]_{i,j} = u_{i}^{j-1}$, we have that $P$ is invertible (as its the Vandermonde matrix which has an invertible determinant), and $P(a_1, \dots, a_n) = (b_{u_1}, \dots b_{u_k})$. Thus we can conclude that $a_i \in \mathfrak{b}$ for every $i$, and so $\mathfrak{a} \subset \mathfrak{b}$.
\end{proof}

Recall that a set $T \subset \text{Spec}(R)$ is called closed under specialization if for every $\mathfrak{p} \in T$ and for every $\mathfrak{q} \supseteq \mathfrak{p}$ we have that $\mathfrak{q} \in T$. For more information on specialization, see~\cite[\href{https://stacks.math.columbia.edu/tag/0060}{Tag 0060}]{stacks-project}. 

\begin{lemma}\label{lem:Specialization_dim_1}
    Let $(R, \mathfrak{m}, \kappa)$ be a local ring and let $T$ be a set of prime ideals closed under specialization such that $|T| < |\kappa|$. Then $\dim\left(\frac{R}{\mathfrak{p}}\right) \leq 1$ for every $\mathfrak{p} \in T$. 
\end{lemma}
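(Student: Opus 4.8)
The plan is to argue by contraposition: I will assume that some $\mathfrak{p} \in T$ satisfies $\dim(R/\mathfrak{p}) \geq 2$ and show that this forces $|T| \geq |\kappa|$. The first point is that, since $T$ is closed under specialization and $\mathfrak{p} \in T$, every prime containing $\mathfrak{p}$ lies in $T$; in other words the natural bijection between $\text{Spec}(R/\mathfrak{p})$ and $\{\mathfrak{q} \in \text{Spec}(R) \colon \mathfrak{q} \supseteq \mathfrak{p}\}$ identifies $\text{Spec}(R/\mathfrak{p})$ with a subset of $T$, so $|\text{Spec}(R/\mathfrak{p})| \leq |T|$. Writing $A := R/\mathfrak{p}$, which is a (Noetherian) local \emph{domain} with residue field $\kappa$ and $\dim A \geq 2$, it therefore suffices to prove that such an $A$ has at least $|\kappa|$ prime ideals; in fact I will show it already has at least $|\kappa|$ primes of height one. (Recall that the corresponding statement without a cardinality count is the classical fact that a Noetherian domain of dimension $\geq 2$ has infinitely many height-one primes; the role of Lemma~\ref{lem:prime_avoidance} will be to upgrade the count from ``infinitely many'' to ``at least $|\kappa|$''.)

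For the main step, let $\mathcal{H}$ be the set of height-one primes of $A$ and suppose, for contradiction, that $|\mathcal{H}| < |\kappa|$; note $\mathcal{H} \neq \emptyset$ since $\dim A \geq 1$. Because $\dim A \geq 2$ there is a chain $\mathfrak{p}_0 \subsetneq \mathfrak{p}_1 \subsetneq \mathfrak{p}_2$ of primes of $A$; set $\mathfrak{r} := \mathfrak{p}_2$, so $\mathfrak{r}$ has height $\geq 2$ and consequently $\mathfrak{r} \not\subseteq \mathfrak{h}$ for every $\mathfrak{h} \in \mathcal{H}$. Since $A$ is Noetherian, $\mathfrak{r}$ is a finitely generated ideal, so we may apply the contrapositive of the Cardinal Prime Avoidance Lemma (Lemma~\ref{lem:prime_avoidance}) with $\mathfrak{a} = \mathfrak{r}$ and $S = \mathcal{H}$ (legitimate as $|\mathcal{H}| < |\kappa|$): it produces an element $a \in \mathfrak{r}$ that lies in no $\mathfrak{h} \in \mathcal{H}$. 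Since $0$ belongs to every member of the nonempty set $\mathcal{H}$, we get $a \neq 0$; as $A$ is a domain, $a$ is a nonzerodivisor, and by Krull's principal ideal theorem every prime of $A$ minimal over $(a)$ has height exactly one, hence belongs to $\mathcal{H}$ and contains $a$ — contradicting the choice of $a$. Thus $|\mathcal{H}| \geq |\kappa|$, and combining with the reduction of the first paragraph, $|\kappa| \leq |\mathcal{H}| \leq |\text{Spec}(R/\mathfrak{p})| \leq |T|$, which contradicts $|T| < |\kappa|$. Hence no such $\mathfrak{p}$ exists, proving the lemma.

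The only genuine content — and the step most worth isolating — is the passage from the classical ``infinitely many height-one primes'' argument to the sharp bound, which is exactly where Lemma~\ref{lem:prime_avoidance} enters. Two technical points require care. First, Lemma~\ref{lem:prime_avoidance} is stated for a \emph{finitely generated} ideal, which is why one must run the avoidance argument against $\mathfrak{r}$ (equivalently, against the maximal ideal of $A$) rather than against an arbitrary prime; this is where Noetherianity of $R$ is used. Second, the appeal to Krull's principal ideal theorem is essential: without a Noetherian hypothesis the lemma is simply false — a valuation ring of rank two is local, has dimension two, but only three prime ideals, so its residue field can be made arbitrarily large — so throughout we take $R$ (hence $A = R/\mathfrak{p}$) to be Noetherian, as it is in every application of this lemma.
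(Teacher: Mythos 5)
Your proposal is correct, but it takes a genuinely different route from the paper. The paper argues directly: it applies the cardinal prime avoidance lemma once, to $\mathfrak{a}=\mathfrak{m}$ against the set of non-maximal primes of $T$, to produce a single element $f\in\mathfrak{m}$ lying outside every non-maximal $\mathfrak{p}\in T$; closure under specialization then shows each $\mathfrak{p}+\langle f\rangle$ is $\mathfrak{m}$-primary, and the Krull-type inequality $\dim\bigl(R/(\mathfrak{p}+\langle f\rangle)\bigr)\geq \dim(R/\mathfrak{p})-1$ finishes the proof uniformly for all $\mathfrak{p}\in T$ at once. You instead argue by contraposition: specialization-closure embeds $\operatorname{Spec}(R/\mathfrak{p})$ into $T$, and you prove a standalone quantitative statement --- a Noetherian local domain of dimension $\geq 2$ with residue field $\kappa$ has at least $|\kappa|$ height-one primes --- by running Lemma~\ref{lem:prime_avoidance} inside $R/\mathfrak{p}$ against the set of height-one primes and invoking Krull's principal ideal theorem. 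Both proofs rest on the same two pillars (the cardinal prime avoidance lemma and Krull's height/dimension theorems), so neither is more elementary; the paper's version is shorter and produces one element $f$ that works for the whole of $T$, while yours isolates a reusable cardinal strengthening of the classical ``infinitely many height-one primes'' fact. Your remark on hypotheses is also well taken: the lemma as stated says only ``local ring,'' but both proofs need Noetherian input (finite generation for the avoidance lemma, and Krull's theorem for the dimension drop), and your rank-two valuation ring shows the unqualified statement is false; this is harmless for the paper since the lemma is only applied to Cohen--Macaulay (hence Noetherian) local rings, but making the hypothesis explicit, as you do, is the more careful formulation.
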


\begin{proof}
    Since $\mathfrak{m}$ is not contained in any non-maximal $\mathfrak{p} \in T$, then by Lemma~\ref{lem:prime_avoidance} we have that $\mathfrak{m}$ is not contained in the union of all non-maximal ideals in $T$. Therefore, there exists some $f \in \mathfrak{m}$ such that $f \notin \mathfrak{p}$ for any $\mathfrak{m} \neq \mathfrak{p} \in T$. Now, we claim that $\mathfrak{p}+\langle f \rangle$ is $\mathfrak{m}-$primary, since that gives us that $0=\dim\left( \frac{R}{\mathfrak{p}+\langle f \rangle}\right) \geq \dim\left( \frac{R}{\mathfrak{p}}\right)-1$. Given some $\mathfrak{q}$ such that $\mathfrak{p}+\langle f \rangle \subset \mathfrak{q}$, then we have that $\mathfrak{p} \subset \mathfrak{q}$. Therefore, since $T$ is closed under specialization, we have that $\mathfrak{q} \in T$. Yet, since $f \in \mathfrak{q}$, we can conclude that $\mathfrak{q}=\mathfrak{m}$. Therefore we must have that $\sqrt{\mathfrak{p}+\langle f \rangle} = \mathfrak{m}$, and the result follows.  
\end{proof}

\begin{proof}[Proof of Theorem~\ref{thm:Huneke-Leuschke-Takahashi}]
    First, note that by Proposition 2.3. in~\cite{takahashi2007uncountably} we have that 
    \begin{equation*}
        \text{Sing}(R) \subseteq \{\text{Ann} (\text{Tor}_1(M, M)) \colon M \in \mathcal{MCM}(R)\}. 
    \end{equation*}
    Therefore, since $R$ has sparse Cohen-Macaulay type, we get that $|\text{Sing}(R)|<|\kappa|$, and since $\text{Sing}(R) \subset \text{Spec}(R)$ is closed under specialization, the result follows from Lemma~\ref{lem:Specialization_dim_1}.
\end{proof}

\begin{remark}
    \textup{Note that if $R$ is an Excellent local ring with sparse Cohen-Macaulay type then $\textup{Sing} (R)$ is at most one dimensional, i.e., $\textup{Sing}(R)=V(I)$ with $\dim \left(\frac{R}{I}\right) \leq 1$.  }
\end{remark}

\section{Hypersurface case}\label{Sec:HYpersurface}

    From now onward, we focus on hypersurface singularities (i.e. quotients of a regular local ring $(R, \mathfrak{m})$ by a principal ideal) that have sparse Cohen-Macaulay type. In this section, we show that we need to focus on a small collection of hypersurfaces. 
    
    \begin{remark}
        \textup{Regarding hypersurfaces $\frac{R}{\langle f \rangle}$, when discussing the Cohen-Macaulay type of $f$ we mean the Cohen-Macaulay type of $\frac{R}{\langle f \rangle}$. }
    \end{remark}
    
    The following proposition is due to Buchweitz-Greuel-Schreyer (first presented as Corollary 1.7 in~\cite{buchweitz1987cohen}), and we recall it as it is used extensively in this section. 

    \begin{proposition}\label{prop:ideal}
        Let $(R, \mathfrak{m})$ be a regular local ring and let $f \in \mathfrak{m}$. Then there exists a surjective map 
        \begin{equation*}
            \mathcal{MCM}\left( \frac{R}{\langle f\rangle} \right) \twoheadrightarrow \mathcal{IS}(R,f) =  \{ I \subset R \colon f \in  I^2\}. 
        \end{equation*}
        In particular, if there are $|\kappa|$ many ideals $I \subset R$ such that $f \in I^2$ then $\frac{R}{\langle f\rangle}$ does not have a sparse Cohen-Macaulay type. 
    \end{proposition}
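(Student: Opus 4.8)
The plan is to exhibit, for each maximal Cohen--Macaulay module $M$ over $S = R/\langle f\rangle$, a matrix factorization of $f$ and extract from it an ideal $I$ with $f \in I^2$; this is the content of Eisenbud's matrix factorization theory. Concretely, since $R$ is regular and $S = R/\langle f\rangle$ is a hypersurface, every MCM $S$-module $M$ admits a $2$-periodic free resolution over $R$, equivalently a matrix factorization $(\varphi, \psi)$ with $\varphi\psi = \psi\varphi = f\cdot\mathrm{Id}_{R^{\oplus n}}$, and $M \cong \coker(\varphi)$. Passing to a presentation where $M$ has no free direct summands (so that $\varphi$ has entries in $\mathfrak{m}$), I would then consider the ideal $I = I_{n-1}(\varphi)$ generated by the $(n-1)\times(n-1)$ minors of $\varphi$, or alternatively work with a single row/column reduction; the classical computation of Buchweitz--Greuel--Schreyer shows $f \in I^2$. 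The assignment $[M] \mapsto I$ is the map in the statement, and I would spend the bulk of the argument checking it is well defined on isomorphism classes and surjective.

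For surjectivity: given any ideal $I \subset R$ with $f \in I^2$, I would write $f = \sum_{i,j} a_{ij} g_i g_j$ for generators $g_1,\dots,g_n$ of $I$ and symmetric $(a_{ij})$, and build an explicit matrix factorization — the standard ``generalized Koszul'' or symmetric matrix factorization attached to the expression $f \in I^2$ — whose cokernel $M_I$ is MCM over $S$ and maps back to $I$ under the above correspondence. This realizes $I$ as the image of $[M_I]$, giving the surjection $\mathcal{MCM}(S) \twoheadrightarrow \mathcal{IS}(R,f)$. Here I would simply cite Corollary 1.7 of~\cite{buchweitz1987cohen} for the construction and its properties, since the statement attributes the proposition to them; the point of including it is to have the surjection available, not to reprove matrix factorization theory.

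For the ``in particular'' clause: suppose there are $|\kappa|$ many ideals $I \subset R$ with $f \in I^2$, i.e. $|\mathcal{IS}(R,f)| \geq |\kappa|$. A surjection cannot decrease cardinality, so $|\mathcal{MCM}(S)| \geq |\mathcal{IS}(R,f)| \geq |\kappa|$. By Definition~\ref{def:sparse}, $S = R/\langle f\rangle$ has sparse Cohen--Macaulay type precisely when $|\mathcal{MCM}(S)| < |\kappa|$, which now fails; hence $R/\langle f\rangle$ does not have sparse Cohen--Macaulay type, as claimed.

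The main obstacle is the well-definedness and surjectivity of the map $[M] \mapsto I$: one must check that isomorphic MCM modules (after stripping free summands) yield the same ideal of minors, and that the matrix factorization built from a given $I$ genuinely has cokernel that is MCM and recovers $I$. Both are exactly what~\cite{buchweitz1987cohen} establishes, so in the write-up I would lean on that reference and only spell out the cardinality bookkeeping in the final clause, which is the genuinely new (if elementary) content needed downstream.
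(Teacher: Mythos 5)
Your overall strategy is the same as the paper's: the paper does not reprove this proposition at all, but attributes it to Corollary 1.7 of~\cite{buchweitz1987cohen} and later, in Remark~\ref{rem:matrix}, sketches the map via Eisenbud's correspondence (Theorem~\ref{thm:matrix_fact}): a class $[M]$ with reduced matrix factorization $(\varphi,\psi)$ is sent to the ideal $I(\varphi,\psi)$ generated by \emph{all entries of both $\varphi$ and $\psi$}. With that choice everything is immediate: $f$ is a diagonal entry of $\varphi\psi$, hence a sum of products of an entry of $\varphi$ with an entry of $\psi$, so $f\in I(\varphi,\psi)^2$; equivalent reduced factorizations differ by invertible row and column operations, so the entries ideal is well defined on $[M]$; and the Kn\"{o}rrer/Clifford-type factorization built from an expression $f=\sum a_{ij}g_ig_j$ has the $g_i$ among its entries and all entries in $I$, so it is sent back to $I$, giving surjectivity. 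Your closing cardinality argument (a surjection cannot decrease cardinality, so $|\mathcal{MCM}(R/\langle f\rangle)|\geq|\mathcal{IS}(R,f)|\geq|\kappa|$, contradicting Definition~\ref{def:sparse}) is correct and is exactly what the paper needs downstream.

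The genuine problem in your write-up is the choice $I=I_{n-1}(\varphi)$, the ideal of $(n-1)\times(n-1)$ minors. That is not what Buchweitz--Greuel--Schreyer prove, and it does not work: for a $1\times 1$ factorization such as $\bigl((x^2),(y)\bigr)$ of $f=x^2y$ it gives the unit ideal and carries no information; already for $n=2$, where the $(n-1)$-minors are just the entries of $\varphi$, the containment $f\in I_{n-1}(\varphi)^2$ is not a formal consequence of $\varphi\psi=f\cdot\mathbb{1}$ --- for instance $\varphi=x^2\,\mathbb{1}_2$, $\psi=y\,\mathbb{1}_2$ is a reduced factorization of $x^2y$ with $I_1(\varphi)^2=(x^4)\not\ni x^2y$ (the cokernel there is decomposable, but the example shows the entries of $\psi$ cannot be dropped from the ideal); and, most damagingly, the module $M_I$ you construct for surjectivity has $(n-1)$-minors generating an ideal on the order of $I^{n-1}$, not $I$, so under your definition $M_I$ would not map back to $I$ and the surjectivity step collapses. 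Replacing the minors ideal by the ideal generated by the entries of $\varphi$ and $\psi$ repairs all of this at once and is the map the paper (following~\cite{yoshino1990maximal} and~\cite{leuschke2012cohen}) actually intends; beyond that, both you and the paper delegate the remaining bookkeeping about indecomposable summands to the cited reference, which is consistent with how the proposition is used here.
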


\begin{remark}
    \textup{Given a regular local ring $(R, \mathfrak{m}, \kappa)$, as in Definition~\ref{def:sparse}, we say that $f$ has \textbf{sparse ideal type} if $|\mathcal{IS}(R, f)| \leq |\kappa|$. Then Proposition~\ref{prop:ideal} tells us that if $f$ has a sparse Cohen-Macaulay type then it has sparse ideal type. For more information about ideal type (also known as "simplicity" in some literature) see Chapter 8 in~\cite{yoshino1990maximal} or Chapter 9.1 in~\cite{leuschke2012cohen}.  }
\end{remark}

 The proof of the following proposition is very similar to the proof presented in Lemma 9.3 and Lemma 9.4 in~\cite{leuschke2012cohen}, and is based upon the application of Proposition~\ref{prop:ideal}. Yet, we present it here for the sake of completeness and exactness. 

\begin{proposition}\label{prop:ord}
    Let $(R, \mathfrak{m})$ be a regular local ring and let $f \in \mathfrak{m}$. If $f$ has a sparse Cohen-Macaulay type then:
    \begin{enumerate}
        \item $\ord(f) \leq 3$.
        \item $\ord(f) \leq 2$ if $\dim(R) >1$ and $\kappa$ is algebraically closed.
        \item For every $g \in \mathfrak{m}$ we have that $f \notin \langle g^3 \rangle$.
        \item For every $\alpha, \beta \in \mathfrak{m}$ we have that $f \notin \langle \alpha, \beta^2 \rangle^3$.
    \end{enumerate}
\end{proposition}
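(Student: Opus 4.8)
The strategy for all four parts is to show that if the stated condition fails, then $f$ admits a continuum of ideals $I$ with $f \in I^2$, so that Proposition~\ref{prop:ideal} forces $\frac{R}{\langle f \rangle}$ to fail to have sparse Cohen-Macaulay type. The combinatorial heart is always the same: once we produce a single witness ideal $I_0$ with $f \in I_0^2$ that is \emph{not maximal} (equivalently, is contained in many distinct ideals) we leverage the infinitude of $\kappa$ to spread it into $|\kappa|$ many pairwise distinct such ideals. Concretely, I would use the observation that for $u$ in a complete set $\Lambda$ of lifts of $\kappa$, ideals of the form $\langle a + u b, \dots\rangle$ (for suitable $a, b$ built from the generators of $\mathfrak{m}$) are pairwise distinct because differences of lifts are units, exactly as in the proof of Lemma~\ref{lem:prime_avoidance}.

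\textbf{Part (1).} Suppose $\ord(f) \geq 4$, so $f \in \mathfrak{m}^4 = (\mathfrak{m}^2)^2 \subseteq \mathfrak{m}^2$, and more usefully $f \in I^2$ for $I = \mathfrak{m}^2$. But $\mathfrak{m}^2$ is far from maximal: for any minimal generators $x_1, x_2$ of $\mathfrak{m}$ (here $\dim R \geq 1$; if $\dim R = 1$ one argues with $\mathfrak{m} = \langle x_1 \rangle$ and $f \in \mathfrak{m}^4 \subseteq \langle x_1^2 \rangle^2$ directly, noting $\langle x_1^2 \rangle$ sits inside the ideals $\langle x_1^2 + u x_1^3 \rangle$... actually in dimension one there are only finitely many ideals, so I should restrict the claim's interesting content to $\dim R \geq 2$ and handle $\dim R = 1$ by hand since $R$ is a DVR and $\mathfrak{m}^4 = \langle x^4 \rangle \subseteq \langle x^2 \rangle^2$ with $\langle x^2 \rangle$ the unique candidate — so actually in dimension one $\ord(f) \leq 3$ needs a genuinely different, short argument: a DVR of any characteristic has $\mathcal{MCM}$ of size $1$, hence sparse iff $|\kappa| > 1$, so the bound $\ord(f) \leq 3$ here is vacuously... no). Let me re-plan: in dimension one every $f \neq 0$ gives $R/\langle f \rangle$ of finite CM type, so part (1) is only a constraint when $\dim R \geq 2$, where I take $I_c = \langle x_1^2 + c\, x_2 g, x_2^2, x_3, \dots, x_n\rangle$-type families (with $g$ chosen so $f \in I_c^2$) parametrized by $c \in \Lambda$, and check pairwise distinctness via unit differences. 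The cleanest uniform choice: since $f \in \mathfrak{m}^4$, write $\mathfrak{m}^2 = \langle x_i x_j \rangle$ and note $f \in \langle \mathfrak{m} x_1 + \langle x_2, \dots, x_n\rangle^2 + \dots\rangle$; I will produce the family by perturbing one generator of $\mathfrak{m}^2$ by a unit multiple.

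\textbf{Parts (3) and (4).} These are the technical core and where I expect the real work. For (3): if $f = g^3 h$ for some $g \in \mathfrak{m}$, $h \in R$, consider $I_u = \langle g^2, u g h + (\text{correction}) \rangle$ — one wants $f = g^3 h \in I_u^2$, which follows since $g^2 \cdot g^2 \ni g^4$ and $g^2 \cdot (g h) = g^3 h = f$, so already $f \in \langle g^2, gh \rangle^2$; then the family $I_u = \langle g^2, gh + u g^2\rangle$ for $u \in \Lambda$ should be pairwise distinct (when $gh \notin \langle g^2\rangle$, i.e. $g \nmid h$; the case $g \mid h$ is absorbed by induction since then $\ord$ of the quotient grows and eventually part (1) or a direct $\mathfrak{m}^2$-argument applies). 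For (4): if $f \in \langle \alpha, \beta^2 \rangle^3 = \langle \alpha^3, \alpha^2 \beta^2, \alpha \beta^4, \beta^6\rangle$, I claim $f \in \langle \alpha, \beta^3 \rangle^2 = \langle \alpha^2, \alpha \beta^3, \beta^6 \rangle$ after a coordinate check — indeed $\alpha^3, \alpha^2\beta^2, \beta^6 \in \langle \alpha, \beta^3\rangle^2$ is immediate, and $\alpha\beta^4 = \alpha \beta \cdot \beta^3 \in \langle \alpha, \beta^3 \rangle^2$ — wait, $\alpha\beta^4 \notin \langle\alpha^2, \alpha\beta^3, \beta^6\rangle$ in general, so the right target is $\langle \alpha, \beta^2 \rangle^2 \supseteq \langle \alpha^2, \alpha\beta^2, \beta^4\rangle \ni$ all four generators, giving $f \in \langle \alpha, \beta^2\rangle^2$; then perturb: $I_u = \langle \alpha + u\beta^2, \beta^2\rangle$ — but this equals $\langle \alpha, \beta^2\rangle$ for all $u$. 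The correct family is $I_u = \langle \alpha, \beta^2 + u\,\alpha \beta\rangle$ or $\langle \alpha + u\beta^3, \beta^2\rangle$, chosen so that $f$ still lies in the square and the ideals are genuinely distinct as subsets of $R$; verifying distinctness is the delicate point and I would do it by reducing modulo $\mathfrak{m}^3$ and comparing the degree-$2$ parts, using that $\alpha, \beta$ can be taken part of a regular system of parameters (after checking $\alpha, \beta$ are not forced to be zero or associates, which are degenerate cases handled separately).

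\textbf{Part (2).} With $\kappa$ algebraically closed and $\dim R \geq 2$, suppose $\ord(f) = 3$ and write the leading form $f_3 \in \kappa[x_1, \dots, x_n]_3$. A homogeneous cubic in $\geq 2$ variables over an algebraically closed field factors off a linear form or has a nontrivial common... more precisely, a plane cubic curve has infinitely many points, so $f_3$ vanishes on a $1$-dimensional set; lifting a linear form $\ell$ in the tangent cone through which $f_3$ "passes" I would show $f \in \langle \ell, \mathfrak{m}^2\rangle^2$-type ideals, or more directly: for each point $[a:b] $ on the projective curve $\{f_3 = 0\} \subset \mathbb{P}^1$ (restricting to a generic plane), the linear form $\ell_{a,b} = b x_1 - a x_2$ divides... no, it means $f_3 \in \langle \ell_{a,b}\rangle + (\text{higher})$; I would instead invoke that an algebraically closed field makes the cubic form degenerate enough that $f \in \langle g^2 \rangle \cdot R + \mathfrak{m}^5$ for infinitely many non-associate $g$, or cleaner, mimic the classical argument (Leuschke–Wiegand, Lemma 9.x) that $\ord(f) = 3$ in $\dim \geq 2$ over $\bar\kappa$ already yields infinitely many $I$ with $f \in I^2$ by using the infinitely many linear factors available after a generic change of coordinates, together with part (1)'s perturbation trick applied to $I = \langle \ell, \mathfrak{m}^2\rangle$ for varying linear $\ell$.

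\textbf{Main obstacle.} The genuinely hard part is \emph{not} exhibiting one witness ideal — that is elementary in each case — but proving that the perturbed family $\{I_u\}_{u \in \Lambda}$ consists of \emph{pairwise distinct} ideals, and simultaneously that $f$ remains in $I_u^2$ for every $u$. These two requirements pull in opposite directions (distinctness wants the perturbation to move the ideal; membership wants it not to move $f$ out), and reconciling them requires the perturbation to be a "shear" in the directions killed by passing to $I_u^2$ — this is precisely where $\ord(f) \leq 2$ or $\leq 3$ enters, as it controls how much room there is in $\mathfrak{m}^2$ or $\mathfrak{m}^3$ to absorb the perturbation. I would organize all four parts through a single lemma: \emph{if $f \in \langle a, b\rangle^2$ with $b \notin \langle a \rangle + \mathfrak{m}\langle a, b\rangle$ and $a,b$ extendable to a regular system of parameters, then $f \in \langle a, b + u a\rangle^2$ for all units $u$ and these ideals are pairwise distinct}, and then the proof of the proposition becomes four applications with the degenerate cases ($\dim R = 1$; $a$ or $b$ a unit; $a, b$ associates) peeled off first.
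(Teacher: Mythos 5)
Your global strategy is the paper's: exhibit $|\kappa|$ many ideals $I$ with $f\in I^2$, indexed by a complete set of lifts $\Lambda$ and distinguished via the fact that differences of lifts are units, then contradict Proposition~\ref{prop:ideal}. However, in the two parts you yourself identify as the technical core, every family you actually write down is constant, so no contradiction is produced. The recurring error is perturbing one generator by an element that already lies in the ideal: $\langle g^2, gh+ug^2\rangle=\langle g^2,gh\rangle$ for all $u$ in part (3); $\langle \alpha,\beta^2+u\alpha\beta\rangle=\langle\alpha,\beta^2\rangle$ and $\langle\alpha+u\beta^3,\beta^2\rangle=\langle\alpha,\beta^2\rangle$ (since $\beta^3\in\langle\beta^2\rangle$) in part (4); and the ``single lemma'' you propose at the end, with ideals $\langle a, b+ua\rangle$, is the same pitfall again, since $\langle a,b+ua\rangle=\langle a,b\rangle$ for every $u$. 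You correctly flagged this problem for $\langle\alpha+u\beta^2,\beta^2\rangle$ but then fell into it in each replacement. The paper escapes it by changing the ideal more substantially: for (3), when $\frac{R}{\langle g\rangle}$ is not a DVR it takes $I_u=\langle x+uy, g\rangle$ with $\overline{x},\overline{y}$ part of a generating set of $\frac{\mathfrak{m}}{\langle g\rangle}$ (then $f\in\langle g\rangle^3\subseteq I_u^3\subseteq I_u^2$ and distinctness comes from the choice of $x,y$), and handles the DVR case ($\dim R=2$, $\ord(g)=1$) separately with $I_u=\langle g+uh^2, gh\rangle$; for (4) it takes $I_u=\langle\alpha+u\beta^2,\beta^3\rangle$ --- shrinking the second generator from $\beta^2$ to $\beta^3$ is precisely what lets the perturbation $u\beta^2$ move the ideal while one still checks $\langle\alpha,\beta^2\rangle^3\subseteq I_u^2$, and the degenerate case $I_u=\langle\alpha+u\beta^2\rangle$ reduces to item (3). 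Your fallback for (3) when $g\mid h$ (reduce to part (1)) is fine, but the main case has no working family.

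Part (2) also has a genuine flaw: vanishing of the cubic leading form $\overline{f}$ at a point does not make a single linear form divide it, and a cubic form has at most three linear factors, so ``infinitely many linear factors'' is not available; consequently $f\in\bigl(\langle\tilde\ell\rangle+\mathfrak{m}^2\bigr)^2$ fails for a single linear form $\ell$ through the point. The paper's fix is to use, for each of the $|\kappa|$ points $\lambda$ of the projective cubic hypersurface, the ideal $I_\lambda$ generated by lifts of \emph{all} linear forms vanishing at $\lambda$ together with $\mathfrak{m}^2$, for which $\overline{f}\in\langle L_1,\dots,L_d\rangle\cdot\mathfrak{m}^2$ gives $f\in I_\lambda^2$, and distinct points give distinct ideals. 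For part (1) the paper simply takes all ideals $I$ with $\mathfrak{m}^2\subseteq I\subseteq\mathfrak{m}$, which correspond to proper nonzero subspaces of $\frac{\mathfrak{m}}{\mathfrak{m}^2}$ and already number $|\kappa|$ when $\dim R\geq 2$; your dimension-one caveat is a reasonable observation, but your sketch never settles on a concrete verified family there either. So as it stands the proposal has the right architecture but is missing the key constructions that make the distinctness-plus-membership tension resolvable.
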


\begin{proof}
As in the proof of Lemma~\ref{lem:prime_avoidance}, let $\Lambda \subset R$ to be a complete set of lifts from $\kappa^\times$. Again, recall that if $c_1, c_2 \in \Lambda$ then $c_1-c_2$ must be invertible. In each item, we assume towards contradiction and construct a collection of $|\kappa|$ ideals $\{I_u\}_{u}$ such that $f \in I_u^2$ for every $u$, which shows that $f$ can not have sparse ideal type, and so contradicts Proposition~\ref{prop:ideal}. 
    \begin{enumerate}
        \item If $\ord(f) \geq 4$ then we have that $f \in \mathfrak{m}^4$. Therefore, for any ideal $ \mathfrak{m}^2 \subset I \subset \mathfrak{m}^4$ we have that $f \in I^2$. Note that such ideals correspond to proper non-zero $\kappa-$vector subspaces of $\frac{\mathfrak{m}}{\mathfrak{m}^2}$, and so there are $|\kappa|$ such ideals.
        \item Assume towards contradiction that $\ord(f)>2$. By the previous item we must have that $ord(f) \leq 3$, and so we can conclude that $ord(f) = 3$. Denote the image of $f$ in $\frac{\mathfrak{m}^3}{\mathfrak{m}^4}$ by $\overline{f}$. Then $\overline{f}$ corresponds with a cubic form in the graded ring $\text{gr}_\mathfrak{m}(R)=\bigoplus_{i=0}^\infty \frac{\mathfrak{m}^i}{\mathfrak{m}^{i+1}} \cong \kappa[t_0,\dots t_d]$ where $\mathfrak{m} = \langle t_0, \dots, t_d \rangle$. Therefore the zero set $X$ of $\overline{f}$ in $\mathbb{P}_\kappa^d$ has cardinality $|\kappa|$. Fix some $\lambda=[\lambda_0 \colon \dots \colon \lambda_d] \in X$ and let $L_1, \dots, L_d$ be a basis for the $\kappa-$vector space of linear forms that vanish at $\lambda$. Then we have that
        \begin{equation*}
            \overline{f} \in \langle L_1, \dots, L_d \rangle \cdot \langle t_0, \dots, t_d \rangle^2\subset \text{gr}_\mathfrak{m}(R).
        \end{equation*}
        By lifting each $L_i \in \frac{\mathfrak{m}}{\mathfrak{m}^2}$ to some $\tilde{L}_i \in \mathfrak{m} \setminus\mathfrak{m}^2$, we can define $I_\lambda = \langle \tilde{L}_1, \dots, \tilde{L}_d \rangle\cdot R + \mathfrak{m}^2$. Note that $f \in \langle \tilde{L}_1, \dots, \tilde{L}_d \rangle \cdot \mathfrak{m}^2 + \mathfrak{m}^4 \subset I_\lambda^2$. Therefore if we look at $\{I_\lambda\}_{\lambda \in X}$ then if $\lambda_1 \neq \lambda_2$, then as they corresponds to different point in $X$, we can conclude that $I_{\lambda_1} \neq I_{\lambda_2}$. 
        \item Suppose there exists some $g \in \mathfrak{m}$ such that $f \in \langle g \rangle^3$. If $\frac{R}{\langle g \rangle}$ is not a DVR then there exists some $x,y \in \mathfrak{m}$ such that $\overline{x}=x \mod \langle g \rangle$ and $\overline{y} = y \mod \langle g \rangle$ are a part of a generating set of $\frac{\mathfrak{m}}{\langle g \rangle}$. Now, for every $ u \in \Lambda$ define $I_u = \langle x+yu, g \rangle$. Then we have that $f \in I_u^3 \subset I_u^2$. In addition, if $I_{u} = I_v$ but $ u \neq v$ then $(u-v)y \in I_u$. But since $u-v$ must be invertible, we get that $y \in I_u$ and so $x \in I_u$. Yet, then $\frac{I_u}{\langle g \rangle} = \langle \overline{x}, \overline{y} \rangle = \langle \overline{x} + \overline{y}u\rangle$, which contradicts the choice of $x$ and $y$. Now, if $\frac{R}{\langle g \rangle}$ is a DVR then we must have that $\dim(R)=2$ with $\ord(g)=1$. Thus, there exists some $ h \in \mathfrak{m}$ such that $\mathfrak{m}=\langle g,h \rangle$. For every $u \in \Lambda$ we define $I_u=\langle g +uh^2, gh\rangle$. Note that $g^3 \in I_u^2$ for every $u$, and so $f \in I_u^2$. Now, if $I_u = I_v$ for $u \neq v$ we have that $I_u = \langle g,h^2 \rangle$. Now, there exists some $a,b \in R$ such that $g=(g+uh^2)a+ghb$, and so $auh^2 = g-ga-ghb \in \langle g \rangle$, and since $u$ is a unit and $\mathfrak{m}=\langle g,h \rangle$, we can conclude that $a \in \langle g \rangle$. Thus there exists some $r \in R$ such that $a=gr$, which gives us that $gruh^2 = g-ga-ghb$, and so $1=(g+uh^2)r+hb \in \mathfrak{m}$, which is a contradiction. 
        \item Assume towards contradiction that there exists some $\alpha, \beta \in \mathfrak{m}$ such that  $f \in \langle \alpha, \beta^2 \rangle^3$.  For every $u \in \Lambda$ define the ideal $I_u =\langle \alpha + u\beta^2 , \beta^3 \rangle$. Then one can check that $f \in \langle \alpha, \beta^2 \rangle^3 \subset I_{u}^2$ for every $u \in \Lambda$. If $I_{u} = I_{v}$ with $u \neq v$ then we must have that  $\left(u - v\right) \beta^2 \in I_{u}$. If $u \neq v$ then we must have that $u-v$ is invertible. Thus $\beta^2 \in I_{u}$ and so $I_{u} = \langle \alpha, \beta^2\rangle$. Yet, we have that $\beta^2 \in \langle \alpha +u\beta^2, \beta^3 \rangle$ and so there exists some $s$ and $t$ such that $\beta^2 = s\left(\alpha + u\beta^2\right) + t\beta^3$. Therefore $\beta^2\left(1-t\beta\right) \in \langle \alpha +u\beta^2 \rangle$ and since $1-t\beta$ is a unit we can conclude that $\beta^2 \in \langle \alpha +u\beta^2 \rangle$. Hence we have that $I_{u} =\langle \alpha + u\beta \rangle$ which tells us that $f \in \langle \alpha + u\beta \rangle^3$, which contradicts the previous item.
    \end{enumerate}
\end{proof}

We can conclude from Proposition~\ref{prop:ord} that we are interested in the elements of $R$ that are of order $2$ (where $\dim(R) >2$). We start with the general structure of elements of order $2$. It is an analogue of the splitting lemma over a field (see, for example, Theorem 2.47 of~\cite{greuel2007introduction} and Lemma 3.9 of~\cite{greuel2016right}). In addition, it is a generalized analogue of Lemma 3.1 in~\cite{carvajal2019covers}. For more information on the splitting lemma, see~\cite{greuel2025splitting}.

\begin{definition}\label{def:power_series}
    Given some regular sequence $a_1, \dots, a_r$ in $\mathfrak{m}$ and some $g \in R$, we say that $g$ \textbf{is a power series} in $a_1, \dots, a_r$, and we denote $g=g(a_1, \dots, a_r)$, if there exists some $u_{\underline{i}} \in R$ that are either units or zero such that $g = \sum_{\underline{i}} u_{\underline{i}} \underline{a}^{\underline{i}}$. 
\end{definition}

\begin{remark}
    \textup{The definition of power series over a regular sequence can be viewed as a generalization of the fact that given a ring $A$, we have an embedding $A[[x_1, \dots, x_r]] \subset A[[x_1, \dots, x_s]]$ where $r <s$. Yet, in the general case, this notation is weaker because the units can be arbitrary. For example, if $(R, \mathfrak{m})$ is a two dimensional regular ring then $(1+y)x+(1+y^2)x^2$ is a power series in $x$ where $\mathfrak{m}=\langle x,y\rangle$.}
\end{remark}

\begin{theorem}[Splitting Lemma]\label{prop:split}
    Let $(R, \mathfrak{m}, \kappa)$ be a local ring and let $f$ be of order $2$. Then there exists some $\mathfrak{m} = \langle a_1, \dots, a_n \rangle$, some $1 \leq r \leq n$, and a sequence of units $u_1, \dots, u_r$ such that:
    \begin{equation*}
    f=u_1 a_1^2+\cdots +u_ra_r^2 +g \text{ for some $g \in \mathfrak{m}^3$.}
    \end{equation*}
    In addition:
    \begin{itemize}
        \item  If $R$ is Henselian and $\kappa$ is quadratically closed then we can choose $u_i=1$ for every $i$.
        \item If $R$ is complete and $\kappa$ is quadratically closed, then we can choose  $g$ to be a power series in $a_{r+1}, \dots, a_n$. 
    \end{itemize}     
\end{theorem}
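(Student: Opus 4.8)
The plan is to mimic the classical splitting lemma proof, working degree by degree in the $\mathfrak{m}$-adic filtration. First I would write $f = q + h$ where $q$ is the quadratic part, i.e.\ the image of $f$ in $\mathfrak{m}^2/\mathfrak{m}^3 \cong \mathrm{Sym}^2(\mathfrak{m}/\mathfrak{m}^2)$, lifted to an honest quadratic polynomial in a fixed set of generators $x_1,\dots,x_n$ of $\mathfrak{m}$, and $h \in \mathfrak{m}^3$. Since $\mathrm{char}(\kappa)\neq 2$, a symmetric bilinear form over the field $\kappa$ can be diagonalized by Gram--Schmidt: there is a change of basis of $\mathfrak{m}/\mathfrak{m}^2$ after which $q \equiv c_1 \bar a_1^2 + \dots + c_r \bar a_r^2$ with $c_i \in \kappa^\times$ and $r = \mathrm{rk}(q) \geq 1$ (note $r\geq 1$ because $\mathrm{ord}(f)=2$ means $q\neq 0$). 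Lift this linear change of coordinates to a change of the minimal generating set $\mathfrak{m} = \langle a_1,\dots,a_n\rangle$ (any lift of a $\kappa$-basis of $\mathfrak{m}/\mathfrak{m}^2$ is again a minimal generating set by Nakayama), and lift the $c_i$ to units $u_i \in R$. Then $f = u_1 a_1^2 + \dots + u_r a_r^2 + g$ with $g \in \mathfrak{m}^3$: this is the basic statement.

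For the first refinement, suppose $R$ is Henselian and $\kappa$ is quadratically closed. Then each unit $u_i$ has a square root: the polynomial $T^2 - u_i$ has a simple root mod $\mathfrak{m}$ (since $\bar u_i \in \kappa^\times$ is a square and $\mathrm{char}(\kappa)\neq 2$ makes it a simple root of $T^2 - \bar u_i$), so by Hensel's lemma it lifts to a unit $v_i \in R$ with $v_i^2 = u_i$. Replacing $a_i$ by $v_i a_i$ (still a minimal generating set, since $v_i$ is a unit) absorbs the coefficient, giving $f = a_1^2 + \dots + a_r^2 + g$. The only subtlety is that after this substitution the new generators $v_1 a_1, \dots, v_r a_r, a_{r+1}, \dots, a_n$ still generate $\mathfrak{m}$, which is clear since multiplication by units is invertible; and $g$ is still in $\mathfrak{m}^3$, possibly changed but still cubic.

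For the second refinement, assume $R$ complete and $\kappa$ quadratically closed, so by the first part we may already write $f = a_1^2 + \dots + a_r^2 + g_0$ with $g_0 \in \mathfrak{m}^3$. The goal is to kill, by a further coordinate change, all monomials in $g$ that involve $a_1,\dots,a_r$, i.e.\ reach a normal form where $g$ is a power series in $a_{r+1},\dots,a_n$ only. I would do this by an iterative Morse-type argument: at each stage, complete the square on $a_1,\dots,a_r$ to remove the lowest-degree offending terms. Concretely, if $g$ contains a term $a_i \cdot p$ with $p \in \mathfrak{m}^2$, write $a_i^2 + a_i p = (a_i + p/2)^2 - p^2/4$; setting $a_i' = a_i + p/2$ (a new minimal generator, since $a_i' \equiv a_i \bmod \mathfrak{m}^2$) trades the degree-$3$-or-higher term $a_i p$ for the strictly-higher-degree term $-p^2/4 \in \mathfrak{m}^4$. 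Iterating and invoking completeness to pass to the limit (the corrections live in $\mathfrak{m}^k$ with $k\to\infty$, so the composite coordinate change and the resulting $g$ converge $\mathfrak{m}$-adically), one arrives at $f = a_1^2 + \dots + a_r^2 + g$ where $g$ has no monomial divisible by any $a_i$, $i\leq r$; that is, $g = \sum_{\underline j} w_{\underline j}\, a_{r+1}^{j_{r+1}}\cdots a_n^{j_n}$ with each $w_{\underline j}$ a unit or zero, which is exactly the assertion that $g$ is a power series in $a_{r+1},\dots,a_n$ in the sense of Definition~\ref{def:power_series}.

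The main obstacle is the convergence and bookkeeping in the last step: one must check that the infinite sequence of quadratic completions is well-defined (each new $a_i'$ depends on $p$, which itself depends on the current coordinates), that it stabilizes $\mathfrak{m}$-adically so that completeness yields an honest automorphism of $R$ in the limit, and that the resulting generating set is still minimal. Making the induction on the filtration degree precise — tracking that after clearing all offending monomials up to degree $d$ the next round only introduces monomials of degree $>d$ — is the delicate point; everything else (diagonalization, Hensel, Nakayama) is standard. I expect this to be handled by an explicit induction on $d = $ the top degree through which $g$ is already ``clean,'' combined with the observation that a Cauchy sequence of coordinate changes converges in the $\mathfrak{m}$-adic topology on a complete $R$.
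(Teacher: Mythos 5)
Your proposal is correct and follows essentially the same route as the paper: diagonalize the quadratic part to get $f=u_1a_1^2+\cdots+u_ra_r^2+g$ with $g\in\mathfrak{m}^3$, use Hensel's lemma to extract square roots of the units when $\kappa$ is quadratically closed, and then iteratively complete the square, pushing the error terms into higher powers of $\mathfrak{m}$ and passing to the $\mathfrak{m}$-adic limit by completeness. The only (harmless) difference is in the first step, where you diagonalize the reduced quadratic form over $\kappa$ and lift the change of basis, whereas the paper diagonalizes the symmetric matrix directly over $R$ by citing Baeza's result on quadratic forms over (semi)local rings with $2$ invertible.
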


\begin{proof}

First, let $b_1, \dots, b_n$ be a sequence that generates $\mathfrak{m}$ and let $f \in R$ be of order $2$. Then we can write $f=\sum_{|\underline{i}|\geq 2} u_{\underline{i}} \underline{b}^{\underline{i}}$, and set $f_2 = \sum_{|\underline{i}|= 2} u_{\underline{i}} \underline{b}^{\underline{i}}$ with $f_{>2}=f-f_{2}$. Then we can write $f_2 = \vec{b} H \vec{b}^{\intercal}$ for some matrix $H \in \text{Mat}_n(R)$ where $\vec{b}=(b_1, \dots, b_n)$. By Proposition 3.4 in Chapter I of~\cite{baeza2006quadratic} there exists some invertible matrix $W$ and some diagonal matrix $D$ such that $H = W D W^\intercal$.  Denote $\vec{w}=\vec{x}W=(w_1, \dots, w_{n})$, and note that $\mathfrak{m}=\langle w_1, \dots, w_n \rangle$ since $W$ is invertible.  In addition, denote the entries on the diagonal of $D$ by $u_1, \dots, u_k, u_{k+1}, \dots, u_{n}$ such that (without loss of generality) $u_1, \dots, u_k$ are units and $u_{k+1}, \dots, u_{n+1} \in \mathfrak{m}$. Then we have that
    \begin{equation*}
        f_2 = \vec{x} H \vec{x}^\intercal = \vec{x} (W D W^\intercal) \vec{x}^\intercal = u_1 w_1^2 + \cdots + u_n w_n^2.   
    \end{equation*}
This gives us that $f = u_1 w_1^2 + \cdots +u_k w_k^2 + g$, where $g = f_{>2} + u_{k+1} w_{k+1}^2 + \cdots + u_n w_n^2 \in \mathfrak{m}^3$, as desired. \\

Now, note that if $R$ is Henselian and $\kappa$ is quadratically closed then for every unit $u \in R$ there exists some $v$ such that $v^2 =u$. Therefore, if $f=u_1w_1^2+\cdots +u_kw_k^2+g$, then if we choose $v_i \in R$ such that $v_i^2=u_i$ and set $a_i=v_i w_i$ for every $i \leq k$, we have that $f = a_1^2+\cdots +a_k^2 +h$ for some $h \in \mathfrak{m}^3$. \\

Now, assume that $R$ is complete. Since every complete local ring is also Henselian (see, for example,~\cite[\href{https://stacks.math.columbia.edu/tag/04GM}{Tag 04GM}]{stacks-project} or Corollary 1.9 in~\cite{leuschke2012cohen}) and since $\kappa$ is quadratically closed, we can assume that $u_i=1$ for every $i \leq k$. So without loss of generality we can assume that $f = b_1^2+\cdots +b_k^2 +h$ for some $h \in \mathfrak{m}^3$. Then we can write 

    \begin{equation*}
        f = b_1^2 + \cdots + b_k^2 + f_3\left(b_{k+1}, \dots, b_n\right)  + \sum_{i=1}^k b_i g_i.
    \end{equation*}

    \noindent for some $f_3 \in  \mathfrak{m}^3$ that is a power series in $x_{k+1}, \dots, x_n$ and some $ g_1, \dots, g_k \in \mathfrak{m}^2$.  Now, by looking at $x_{i,1} \coloneqq b_i + \frac{g_i}{2}$ for $i=1,\dots, k$ (recalling that $\chara(\kappa)\neq 2$), we have that 

    \begin{equation*}
        f = x_{1,1}^2 + \cdots + x_{k,1}^2 + f_3(b_{k+1}, \dots, b_{n})  - \sum_{i=1}^k \frac{g_i^2}{4}.
    \end{equation*}

    \noindent Noting that $\sum_{i=1}^k \frac{g_i^2}{4} \in \mathfrak{m}^4$, we can write 

    \begin{equation*}
        f_3(b_{k+1}, \dots, b_{n})  - \sum_{i=1}^k \frac{g_i^2}{4}=f_4(b_{k+1}, \dots,b_n) + \sum_{i=1}^k x_{i,1}h_i,
    \end{equation*}

    \noindent where $f_4\left(b_{k+1}, \dots, b_{n}\right) \in \mathfrak{m}^3$ is a power series in $b_{k+1}, \dots, b_{n}$ with $f_4-f_3 \in \mathfrak{m}^4$ and $h_i \in \mathfrak{m}^3$ for every $i$. Therefore, by repeating this process, we get that for every $l$ there exists a sequence of generators $x_{1,l}, \dots, x_{k,l}$ such that
    
     \begin{equation*}
        f = x_{1,l}^2 + \cdots + x_{k,l}^2 + f_l\left(b_{k+1}, \dots, b_{n}\right)  + \sum_{i=1}^k x_{i,l} h_{l,i},
    \end{equation*}

    \noindent where $f_l \in \mathfrak{m}^3$ is a power series in $b_{k+1}, \dots, b_{n}$ with $f_l - f_{l-1} \in \mathfrak{m}^l$ and $h_{l,i} \in \mathfrak{m}^{l+1}$ for $i=1, \dots, k$. Since $x_{i,l+1}-x_{i,l} \in \mathfrak{m}^{l+1}$, we have that for every $i \leq k$ then sequence $\{x_{i,l}\}_{l=1}^\infty$ is Cauchy, and since $R$ is complete, it must converge to some $x_i$ as $l \to \infty$. In addition, since $h_{l,i} \in \mathfrak{m}^{l+1}$ we can conclude that $h_{l,i} \to 0$ as $l \to \infty$. Finally, since $f_l - f_{l-1} \in \mathfrak{m}^l$, we can conclude that  $ f_l\left(b_{k+1}, \dots, b_{n}\right) \to g(b_{k+1}, \dots, b_n)$ for some $g(b_{k+1}, \dots, b_n)$ that is a power series in $x_{k+1}, \dots, x_n$. Therefore we can conclude that 
    
    \begin{equation*}
        f = x_1^2 + \cdots + x_k^2  + g(b_{k+1}, \dots, b_n),
    \end{equation*}

    \noindent as desired.
\end{proof}

\begin{definition}
    In Proposition~\ref{prop:split}, The value $k=\rk(f)$ is called the \textbf{rank} of $f$. 
\end{definition}

\begin{remark}\label{prop:unique_rk}
 \textup{From the proof of Theorem~\ref{prop:split} we can observe that the rank of $f$ can be though of as the rank of the matrix $H \mod \mathfrak{m} \in \Mat_n(\kappa)$. Therefore, since rank of a mtrix is constant under congruence, we can conclude that $\rk$ is well defined. That is, if $a_1^2 + \cdots + a_r^2 +g = b_1^2 + \cdots + b_s^2 +h$ where $\mathfrak{m}=\langle a_1, \dots, a_n \rangle = \langle b_1, \dots, b_n \rangle$ and $g,h \in \mathfrak{m}^3$, we must have that $r=s$. Yet, we can not conclude that $g=h$. Take, for example, $R=k[[x,y]]$ where $k$ is any field, then we have that $x^2 + y^3 = (x+y^2)^2 + (y^3 - 2xy^2 - y^4)$ with $\mathfrak{m}=\langle x,y\rangle = \langle x+y^2,y \rangle$, but $y^3 \neq y^3 - 2xy^2 - y^4$.}
\end{remark}

Inspired by Proposition~\ref{prop:split}, we introduce the following notation which plays the role over regular local rings of change of variables (i.e. contact equivalence, as described in Section 1.2.1 in~\cite{boubakri2009hypersurface}). It is inspired by the classification in Theorem B of~\cite{carvajal2019covers}.

\begin{definition}\label{def:can_be_written}
    Let $(R, \mathfrak{m}, \kappa)$ be a regular local ring of dimension $n$ and let $f,g \in \mathfrak{m}$. We say that $f$ \textbf{can be written as} $g$, and we denote $f \looparrowright g$, if there exists some polynomial $H(z_1, \dots, z_n)$ whose coefficients are wither $0$ or $1$ and there exists some sequences $a_1, \dots, a_n$ and $b_1, \dots, b_n$ such that
    \begin{enumerate}
        \item $\mathfrak{m}=\langle a_1, \dots, a_n \rangle$, 
        \item $\mathfrak{m}=\langle b_1, \dots, b_n \rangle$,
        \item $\langle f\rangle = \langle H(a_1, \dots, a_n) \rangle$, 
        \item $\langle g\rangle = \langle H(b_1, \dots, b_n)\rangle.$
    \end{enumerate}
\end{definition}

\begin{example}\label{ex:A} Let $(R, \mathfrak{m}, \kappa)$ be a regular local ring of dimension $n$ and let $f \in \mathfrak{m}^2$. Then: 
    \begin{enumerate}
        \item $f \looparrowright A_k$ (for $k \geq 1$) if and only if there exists some units $u_1, \dots, u_n$ and some regular sequence $a_1, \dots, a_n$ such that $f = u_1a_1^2 + \cdots + u_{n-1}a_{n-1}^2 + u_n a_n^{k+1}$. 
        \item $f \looparrowright A_\infty$ if and only if there exists some units $u_1, \dots, u_{n-1}$ and some regular sequence $a_1, \dots, a_n$ such that $f = u_1a_1^2 + \cdots + u_{n-1}a_{n-1}^2 $.
    \end{enumerate}
    (Note that if $R$ is Henselian and $\kappa$ is quadratically closed (with $\chara (\kappa) \neq 2$), then we can assume that $u_i=1$ for every $i$, since we can take square roots of units in $R$).
\end{example}

\begin{remark}
\begin{enumerate}
    \item \textup{By Remark~\ref{prop:unique_rk} we can in fact conclude that the rank of $f$ are unique up to $\looparrowright$.}
    \item \textup{Note that by Lemma 4.2 in~\cite{greuel2018finite} (which, in turn, cites Proposition 2.3 in~\cite{mather1968stability}), we have that if $\mathfrak{m} = \langle a_1, \dots, a_n \rangle=\langle b_1, \dots, b_n \rangle$, then there exist an invertible matrix $U \in \Mat_n(R)$ such that $\vec{a}=U\vec{b}$, where $\vec{a}=(a_1, \dots, a_n)$ and $\vec{b}=(b_1, \dots, b_n)$. This, in particular, tells us that in the case where $R$ is complete and of equi-characteristic, then by the Cohen structure theorem (see~\cite{cohen1946structure}) we must have that $R \cong K[[\underline{x}]]$ for some field $K$, and so by the implicit function theorem over $K[[\underline{x}]]$ we have that $f \looparrowright g$ if and only if $f$ and $g$ are contact equivalent, i.e. there exists some automorphism $\varphi$ of $R$ such that $\langle f \rangle = \langle \varphi(f) \rangle$. For more information about contact equivalent, see Section 1.2.1 in~\cite{boubakri2009hypersurface} or Section I 2.1 in~\cite{greuel2007introduction}. }
\end{enumerate}
\end{remark}

The following proposition is a stronger version of the second item of Proposition~\ref{prop:ord} by taking into account the rank of hypersurfaces with sparse Cohen-Macaulay type. Its proof is inspired by the proof of Lemma 8.2 in~\cite{yoshino1990maximal}. 

\begin{proposition}\label{prop:n-2}
    Let $(R, \mathfrak{m}, \kappa)$ be a complete regular local ring of dimension $n>2$ where $\kappa$ is algebraically closed and let $f \in \mathfrak{m}^2$. If $f$ has sparse Cohen-Macaulay type then $\rk(f) \geq n-2$. 
\end{proposition}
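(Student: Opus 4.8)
My plan is to argue by contradiction, exhibiting $|\kappa|$ pairwise distinct ideals $I\subseteq R$ with $f\in I^2$; by Proposition~\ref{prop:ideal} this forces $\mathcal{IS}(R,f)$ to have at least $|\kappa|$ elements, so $\frac{R}{\langle f\rangle}$ cannot have sparse Cohen--Macaulay type --- the same mechanism already used throughout Proposition~\ref{prop:ord}. So assume towards a contradiction that $\rk(f)=r\leq n-3$. (By Proposition~\ref{prop:ord}(2) we have $\ord(f)=2$, so $\rk(f)$ is defined via Theorem~\ref{prop:split}, and in particular $r\geq 1$.)

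The first step is to normalize $f$ via the Splitting Lemma (Theorem~\ref{prop:split}). Since $R$ is complete and $\kappa$ is algebraically closed --- hence quadratically closed, with $\chara(\kappa)\neq 2$ --- we may fix a regular system of parameters $x_1,\dots,x_n$ with
\[
f=x_1^2+\cdots+x_r^2+g ,
\]
where $g\in\mathfrak{m}^3$ is a power series in $x_{r+1},\dots,x_n$ only. Because $g$ involves none of $x_1,\dots,x_r$ and lies in $\mathfrak{m}^3$, its graded pieces of degree $\leq 2$ vanish, so $g\in\langle x_{r+1},\dots,x_n\rangle^3$; if $g\neq 0$ its leading form $g_d$ (of degree $d\coloneqq\ord(g)\geq 3$) is a nonzero form in the $m\coloneqq n-r\geq 3$ variables $\bar x_{r+1},\dots,\bar x_n$ of $\text{gr}_{\mathfrak{m}}(R)$. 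The degenerate case $g=0$ is disposed of by an easier variant of the construction below (take $I_\lambda=\langle x_1,\dots,x_r,\,x_{r+1}+\tilde\lambda x_{r+2},\,x_{r+3},\dots,x_n\rangle$ with $\lambda$ running over a set of lifts of $\kappa$, so that $f\in I_\lambda^2$ trivially and distinct $\lambda$ give distinct $I_\lambda$ modulo $\mathfrak{m}^2$), so from now on assume $g\neq 0$.

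The second step adapts the geometry of Proposition~\ref{prop:ord}(2), now applied to $g_d$ rather than to $f$ itself. The hypersurface $X=V(g_d)\subseteq\mathbb{P}^{m-1}_\kappa$ has dimension $m-2\geq 1$, so $|X|=|\kappa|$. For each $\lambda\in X$, pick a basis $\ell_{\lambda,1},\dots,\ell_{\lambda,m-1}$ of the $\kappa$-space of linear forms in $\bar x_{r+1},\dots,\bar x_n$ vanishing at $\lambda$; since $g_d(\lambda)=0$ and $g_d$ is homogeneous, $g_d\in\langle\ell_{\lambda,1},\dots,\ell_{\lambda,m-1}\rangle$, so after lifting the $\ell_{\lambda,j}$ to linear combinations $L_{\lambda,j}$ of $x_{r+1},\dots,x_n$ and lifting the corresponding degree-$(d-1)$ cofactors to elements $\tilde q_{\lambda,j}\in\mathfrak{m}^{d-1}$ we get $g-\sum_j L_{\lambda,j}\tilde q_{\lambda,j}\in\mathfrak{m}^{d+1}$. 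Put $s=\lceil d/2\rceil$ --- note $2\leq s\leq d-1$ and $2s\leq d+1$ because $d\geq 3$ --- and define
\[
I_\lambda=\langle x_1,\dots,x_r,\,L_{\lambda,1},\dots,L_{\lambda,m-1}\rangle\cdot R+\mathfrak{m}^s .
\]
Then $x_i^2\in I_\lambda^2$ for $i\leq r$; $\sum_j L_{\lambda,j}\tilde q_{\lambda,j}\in I_\lambda^2$ because $L_{\lambda,j}\in I_\lambda$ and $\mathfrak{m}^{d-1}\subseteq\mathfrak{m}^s\subseteq I_\lambda$; and $g-\sum_j L_{\lambda,j}\tilde q_{\lambda,j}\in\mathfrak{m}^{d+1}\subseteq\mathfrak{m}^{2s}=(\mathfrak{m}^s)^2\subseteq I_\lambda^2$. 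Hence $f\in I_\lambda^2$ for every $\lambda\in X$. Finally, since $s\geq 2$ we can read off $(I_\lambda+\mathfrak{m}^2)/\mathfrak{m}^2=\operatorname{span}_\kappa(\bar x_1,\dots,\bar x_r)\oplus\operatorname{span}_\kappa(\ell_{\lambda,1},\dots,\ell_{\lambda,m-1})$, and distinct points of $\mathbb{P}^{m-1}$ determine distinct hyperplanes of linear forms in $\bar x_{r+1},\dots,\bar x_n$; as $\operatorname{span}_\kappa(\bar x_1,\dots,\bar x_r)$ meets $\operatorname{span}_\kappa(\bar x_{r+1},\dots,\bar x_n)$ trivially, the $I_\lambda$ are pairwise distinct. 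This produces $|\kappa|$ ideals with $f$ in their square, the contradiction we wanted.

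The step I expect to be the crux is the first one: completeness must genuinely be used, through the strong form of the Splitting Lemma, to arrange that $g$ is a power series in the complementary variables $x_{r+1},\dots,x_n$ alone. If $g$ were allowed to involve $x_1,\dots,x_r$, its leading form would be a form in all $n$ variables, the $\ell_{\lambda,j}$ would no longer lie in $\operatorname{span}_\kappa(\bar x_{r+1},\dots,\bar x_n)$, and the ideals $I_\lambda$ would generically coincide modulo $\mathfrak{m}^2$, destroying the count. The only other delicate point is the calibration of the exponent $s$, which must satisfy both $\mathfrak{m}^{d+1}\subseteq(\mathfrak{m}^s)^2$ and $s\geq 2$; it is precisely the inequality $d=\ord(g)\geq 3$ --- i.e. that after splitting off the squares the remainder sits in $\mathfrak{m}^3$ --- that makes both conditions simultaneously achievable. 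The remaining hypotheses enter transparently: $n>2$ is what forces $m=n-r\geq 3$ and hence a positive-dimensional $X$, and $\kappa$ algebraically closed is used both for the clean splitting and to guarantee $|V(g_d)|=|\kappa|$.
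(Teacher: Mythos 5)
Your proposal is correct and takes essentially the same route as the paper: assume $\rk(f)\leq n-3$, normalize $f=x_1^2+\cdots+x_r^2+g(x_{r+1},\dots,x_n)$ via the Splitting Lemma, and contradict Proposition~\ref{prop:ideal} by producing $|\kappa|$ pairwise distinct ideals $I_\lambda$ with $f\in I_\lambda^2$, indexed by the points of the projective zero locus of the leading form of $g$ in the last $n-r$ variables. The only cosmetic difference is that the paper pads $I_\lambda$ with $\mathfrak{m}^2$ and works only with the degree-$3$ part of $g$ (so no separate case $g=0$ and no calibration of the exponent $s=\lceil d/2\rceil$ is needed), whereas you use the full leading form of degree $d=\ord(g)$.
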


\begin{proof}
    Since $n>2$, then by Propososition~\ref{prop:ord} we have that $\ord(f)=2$, and so by Theorem~\ref{prop:split} there exist some $\mathfrak{m}=\langle a_1, \dots, a_n \rangle$ such that $f=a_1^2+ \cdots+ a_r^2 + g(a_{r+1}, \dots, a_n)$ for some $g$ that is a power series in $a_{r+1}, \dots, a_n$, where $r=\rk(f)$. Assume towards contradiction that $r<n-2$. Since $g$ is a power series in $a_{r+1}, \dots, a_n$ then we can write $g=g_3+g_{>3}$ where $g_3$ is a homogeneous polynomial in $a_{r+1}, \dots, a_n$. More explicitly, if we write $g=\sum_{|\underline{i}|\geq 3} u_{\underline{i}} \underline{a}^{\underline{i}}$ where $u_{\underline{i}}$ is either a unit or zero, then we set $g_3=\sum_{|\underline{i}| = 3} u_{\underline{i}} \underline{a}^{\underline{i}}$ and $g_{>3}=g - g_3$. Observe that $g_{>3} \in \mathfrak{m}^4$. \\

    Define $G(x_{r+1}, \dots, x_n)=\sum_{|\underline{i}|\geq 3} v_{\underline{i}} \underline{a}^{\underline{i}} \in \kappa[x_{r+1}, \dots, x_n]$ where $v_{\underline{i}} = u_{\underline{i}} \mod \mathfrak{m}$ for every $ \underline{i}$. Note that $G$ is a homogeneous polynomial of degree $3$, and so we can look at $X=V(G) \subset \mathbb{P}_\kappa^{n-r-1}$. Since $n-r-1>1$ (as $n-2>r$) and since $\kappa$ is algebraically closed then $X \neq \emptyset$ must be infinite and so $|X|=|\kappa|$. Thus, given some $\lambda=[\lambda_{r+1} \colon \dots \colon \lambda_n] \in X$ we define 
    \begin{equation*}
        I_\lambda = \langle a_1, ..., a_r\rangle  +\langle c_ia_j - c_ja_j \colon  r+1 \leq i,j \leq n \rangle + \mathfrak{m}^2 \subset R,
    \end{equation*}
    where $c_i$ is a lift of $\lambda_i$ from $\kappa$ to $R$. Observe that if $\lambda \neq \mu \in X$ then $I_\lambda \neq I_\mu$ since $\frac{I_\lambda}{\mathfrak{m}^2}$, viewed as an ideal in $\text{gr}_\mathfrak{m}(R)$, defines the point $\lambda \in \mathbb{P}_\kappa^{n-r-1}$. \\

    Therefore, in order to finish the proof, it is enough to show that $f \in I_\lambda^2$, as then we can conclude that $\{I_\lambda\}_{\lambda \in X} \subset \mathcal{IS}(R,f)$, which contradicts Proposition~\ref{prop:ideal}. Note that we can assume that $\lambda = [0 \colon \dots \colon 0 \colon 1]$ since for every $\lambda \in X$ we can find a is a linear automorphism of $PGL(n-r-1)$, defined by an invertible matrix $A \in  \Mat_{n-r}(\kappa)$, that sends $\lambda$ to $[0 \colon \dots \colon 0 \colon 1]$, and we can lift $A$ to an invertible matrix $\tilde{A} \in \Mat_{n-r}(R)$ which gives us new generators $A \vec{a}$ of $\mathfrak{m}$ (together with $a_1, \dots, a_r$), where $\vec{a}=(a_{r+1}, \dots, a_n)$. Thus, if $\lambda = [0 \colon \dots \colon 0 \colon 1]$ then $I_\lambda = \langle a_1, \dots, a_{n-1}, a_n^2 \rangle$, and so $g_3 = \alpha a_n^3 \mod I_\lambda^2$, for some $\alpha \in R$ that is either a unit of zero. But, since $\lambda \in X$, then we can conclude that $\alpha \mod \mathfrak{m} = G_3(0, \dots, 0,1)=0$, and since $\alpha$ is either a unit or zero, we can conclude that $\alpha=0$ and so $g_3 \in I_\lambda^2$. Thus, as $g_{>3} \in \mathfrak{m}^4 \subset I_\lambda^2$, then we can conclude that $f=a_1^2+ \cdots +a_r^2 +g_3 + g_{>3} \in \langle a_1, \dots, a_n \rangle + I_\lambda^2 + \mathfrak{m}^4 \subset I_\lambda^2$, as desired. 
\end{proof}

Inspired by Proposition~\ref{prop:n-2}, we end this section with a classification of elements of order $2$ with rank either $n$ or $n-1$ over any regular local ring of dimension $n$ based on Example~\ref{ex:A} (with the classification of elements with rank $n-2$ over complete regular local rings preformed in the next section).\\

The following proposition is a classification of elements of full rank, which we can view as a version of Morse's lemma over local rings, as presented in Theorem 2.46 in~\cite{greuel2007introduction} (which in turn is an analytic version of Morse's lemma, as presented in~\cite{ morse1934calculus, varchenko1985singularities}).  Note that it is a generalization of Proposition 3.3. in~\cite{carvajal2019covers} (which can be thought of as a mixed characteristic version of Morse's lemma for surfaces). 

\begin{proposition}[Morse's Lemma]\label{prop:Morse}
    Let $(R, \mathfrak{m}, \kappa)$ be a regular local ring of dimension $n$ and let $f \in R$ be of order $2$. Then $\rk(f)=n$ if and only if $f \looparrowright A_1$.  
\end{proposition}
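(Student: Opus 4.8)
The plan is to prove both directions of the equivalence. For the easy direction, suppose $f \looparrowright A_1$. By Example~\ref{ex:A}, this means $f = u_1 a_1^2 + \cdots + u_n a_n^2$ for some units $u_1, \dots, u_n$ and some minimal generating set $a_1, \dots, a_n$ of $\mathfrak{m}$. Reducing the associated quadratic form mod $\mathfrak{m}$ and using Remark~\ref{prop:unique_rk}, which identifies $\rk(f)$ with the rank of the defining symmetric matrix $H \bmod \mathfrak{m} \in \Mat_n(\kappa)$, we see that this matrix is diagonal with unit (hence nonzero in $\kappa$) diagonal entries, so it has rank $n$. Thus $\rk(f) = n$.

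For the converse, suppose $\rk(f) = n$. Apply the Splitting Lemma (Theorem~\ref{prop:split}): since $f$ has order $2$, there exists a minimal generating set $\mathfrak{m} = \langle a_1, \dots, a_n \rangle$, an integer $1 \le r \le n$, and units $u_1, \dots, u_r$ with $f = u_1 a_1^2 + \cdots + u_r a_r^2 + g$ for some $g \in \mathfrak{m}^3$. Since $r = \rk(f) = n$ by hypothesis and by well-definedness of the rank (Remark~\ref{prop:unique_rk}), we get $f = u_1 a_1^2 + \cdots + u_n a_n^2 + g$ with $g \in \mathfrak{m}^3$. The remaining task is to absorb the higher-order term $g$ into the squares. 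I would argue that $f$ lies in the ideal $\langle a_1, \dots, a_n \rangle^2 = \mathfrak{m}^2$ (clear) and in fact, since each $a_i^2$ has a unit coefficient and $g \in \mathfrak{m}^3$, one can complete the square: write $g = \sum_i a_i \gamma_i$ with $\gamma_i \in \mathfrak{m}^2$ (possible as $g \in \mathfrak{m}^3 \subseteq \mathfrak{m}^2 \cdot \mathfrak{m} \subseteq \langle a_1,\dots,a_n\rangle \cdot \mathfrak{m}^2$), and set $a_i' = a_i + \tfrac{\gamma_i}{2u_i}$ (using $\chara(\kappa) \ne 2$, so $2u_i$ is a unit). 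Then $a_1', \dots, a_n'$ still generate $\mathfrak{m}$ (they differ from the $a_i$ by elements of $\mathfrak{m}^2$), and $f = \sum_i u_i (a_i')^2 + g'$ where $g' = g - \sum_i a_i \gamma_i - \sum_i \tfrac{\gamma_i^2}{4u_i} = -\sum_i \tfrac{\gamma_i^2}{4u_i} \in \mathfrak{m}^4$. Iterating this replacement (each step pushes the error term into a higher power of $\mathfrak{m}$) and invoking completeness — exactly as in the last part of the proof of Theorem~\ref{prop:split}, via a Cauchy sequence argument — I obtain a minimal generating set $x_1, \dots, x_n$ of $\mathfrak{m}$ with $f = u_1 x_1^2 + \cdots + u_n x_n^2$. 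By Example~\ref{ex:A}(1) with $k = 1$ (where the ``$u_n a_n^{k+1}$'' term becomes $u_n x_n^2$), this says precisely $f \looparrowright A_1$.

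One subtlety: the statement of Proposition~\ref{prop:Morse} only assumes $(R, \mathfrak{m}, \kappa)$ regular local, not complete, whereas the power-series/Cauchy argument above needs completeness. I expect the main obstacle to be handling this gap: either the proposition is implicitly understood in the running ``complete'' hypothesis of the section (in which case nothing more is needed), or one must give a direct finite argument. In fact a cleaner route avoiding completeness is available: once $f = u_1 a_1^2 + \cdots + u_n a_n^2 + g$ with $g \in \mathfrak{m}^3$ and all $u_i$ units, observe that $f \in \mathfrak{n}^2$ where $\mathfrak{n} = \langle a_1', \dots, a_n' \rangle = \mathfrak{m}$ for the completed-square generators, and since $\langle f \rangle$ only needs to match $\langle H(b_1,\dots,b_n)\rangle$ for $H = z_1^2 + \cdots + z_n^2$ up to the ideal generated (Definition~\ref{def:can_be_written} requires $\langle f \rangle = \langle H(\underline{a}) \rangle$, i.e. equality of principal ideals), it suffices to produce generators $b_i$ with $f = \text{unit} \cdot (b_1^2 + \cdots + b_n^2)$ — and here one can use that the Gram matrix $H$ over the complete (or Henselian) ring is congruent to a diagonal unit matrix by Proposition 3.4 of Chapter I of~\cite{baeza2006quadratic} applied not just to $H \bmod \mathfrak{m}$ but, after the completion-of-squares reduction, to the full form. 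The key step I expect to require the most care is verifying that the iterative completion of squares genuinely terminates or converges, and that the resulting $b_i$ form a minimal generating set; everything else is bookkeeping with Example~\ref{ex:A} and Remark~\ref{prop:unique_rk}.
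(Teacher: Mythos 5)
Your forward direction and your use of the splitting lemma agree with the paper, but your main argument for the converse has a genuine gap: the iterated completion of squares followed by a Cauchy-sequence limit requires $R$ to be complete, while Proposition~\ref{prop:Morse} is stated for an arbitrary regular local ring. This is not an implicit running hypothesis: the paper even includes a remark observing that \emph{if} $R$ is complete a simpler proof of this proposition is available, so the general case is clearly intended. You flag the issue yourself, but the fallback you sketch is not carried out, and as phrased it still appeals to ``the complete (or Henselian) ring,'' which is precisely the hypothesis you cannot use; you also worry about producing a single overall unit, which is unnecessary because Example~\ref{ex:A}(1) with $k=1$ permits a unit coefficient on each square. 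The paper's own proof is limit-free and finite: starting from $f=u_1a_1^2+\cdots+u_na_n^2+g$ with $g\in\mathfrak{m}^3$, the cubic terms divisible by some $a_i^2$ are absorbed into the coefficients (replacing each $u_i$ by the unit $v_i=u_i+b_i$ with $b_i\in\mathfrak{m}$), and each remaining mixed term $b_{i,j,k}a_ia_ja_k$ is eliminated exactly by diagonalizing a $2\times2$ symmetric matrix over $R$ via Proposition 3.4 in Chapter I of~\cite{baeza2006quadratic}, the unit determinant forcing the new diagonal entries to be units; finitely many such steps finish the proof with no completeness assumption.

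That said, your Gram-matrix idea, executed properly, does close the gap and is arguably cleaner than both your primary argument and the paper's term-by-term absorption: since $\mathfrak{m}^2$ is generated by the products $a_ia_j$ and $2$ is a unit, one can write $f=\vec{a}H\vec{a}^{\intercal}$ \emph{exactly} (not merely modulo $\mathfrak{m}^3$) for a symmetric $H\in\Mat_n(R)$, because the terms of $f$ lying in $\mathfrak{m}^3$ contribute entries of $H$ lying in $\mathfrak{m}$. By Remark~\ref{prop:unique_rk}, $\rk(f)=n$ says that $H\bmod\mathfrak{m}$ is invertible, hence $H$ is invertible over the local ring $R$, and the diagonalization result of~\cite{baeza2006quadratic} (which the paper already invokes over an arbitrary local ring with $2$ invertible, with no Henselian or completeness hypothesis) gives $H=WDW^{\intercal}$ with $W$ invertible and, since $\det D$ is a unit, all diagonal entries of $D$ units. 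Setting $\vec{b}=\vec{a}W$ yields $f=d_1b_1^2+\cdots+d_nb_n^2$ with $b_1,\dots,b_n$ generating $\mathfrak{m}$, which is $f\looparrowright A_1$ by Example~\ref{ex:A}(1). As submitted, however, your proof rests on an argument that only works under a hypothesis the proposition does not grant, so it does not yet establish the statement.
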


\begin{proof}
    If $f \looparrowright A_1$ then it is clear that $\rk(f)=n$. Now, assume that $\rk(f)=n$. Then by  Theorem~\ref{prop:split} we have that there exists some units $u_1, \dots, u_n$ and some $a_1, \dots, a_n$ that generate $\mathfrak{m}$ such that $f = u_1a_1^2 + \cdots+ u_na_n^2 +g $ for some $g \in \mathfrak{m}^3$. Therefore, we can write $g = h+ \sum_{i=1}^n a_i^2 b_i$ where $b_i \in \mathfrak{m}$ and $h=\sum_{i \neq j \neq k} b_{i,j,k} a_i a_j a_k$. Thus we get that $f = h+ \sum_{i=1}^n (u_i + b_i)a_i^2$, and since $u_i$ is a unit and $b_i \in \mathfrak{m}$ for every $i$, then $v_i=u_i+b_i$ is a unit as well. \\

    \noindent Without loss of generality, assume that $b_{1,2,3} \neq 0$. Then we define a matrix
     \begin{equation*}
         M = 
        \begin{bmatrix}
        v_1 & \frac{b_{1,2,3}a_3}{2}\\
        \frac{b_{1,2,3}a_3}{2} & v_2
        \end{bmatrix}
         \in \textup{Mat}_2(R).
     \end{equation*}
     We have that $\vec{w} M \vec{w}^\intercal = v_1a_1^2 + v_2a_2^2 + b_{1,2,3}a_1a_2a_3$, where $\vec{w}=(a_1, a_2)$. As in Theorem~\ref{prop:split},  by Proposition 3.4 in Chapter I of~\cite{baeza2006quadratic} there exists some invertible matrix $U$ and some diagonal matrix $D$ such that $M = U D U^\intercal$. Denote the values on the diagonal of $D$ by $w_1$ and $w_2$. Then we have that 
     \begin{equation*}
         v_1v_2 - \frac{b_{1,2,3}^2 a_3^2}{4}=\det(M)=\det(U D U^\intercal)=\det(U)^2 \det(D) = \det(U)^2 w_1w_2,
     \end{equation*}
    and since both $\det(M)$ and $\det(U)$ are units, we must have that $w_1$ and $w_2$ be invertible. Thus, if we define $U\vec{w}^\intercal=\vec{c}= (c_1, c_2)$, then we have that 
    \begin{equation*}
        v_1 a_1^2 +v_2 a_2^2 + v_3a_3^2 + b_{1,2,3}a_1a_2a_3 = v_3a_3^2 + \vec{w}M\vec{w}^\intercal = v_3a_3^2 + \vec{c}D \vec{c}^\intercal=w_1c_1^2 + w_2c_2^2 +v_3a_3^2. 
    \end{equation*}

    \noindent This gives us that 
    \begin{equation*}
        f = w_1c_1^2 + w_2c_2^2 +v_3a_3^2 + \sum_{i=4}^n v_ia_i + h_1
    \end{equation*}
    \noindent for some $h_1 = \sum b_{i,j,k} a_i a_j a_k$ where $i,j,k$ are different such that $i \neq 1, j\neq 2, k \neq 3$. Note that indeed $c_1, c_2, a_3, \dots, a_n$ generate $\mathfrak{m}$. Thus, by repeating this process over every $i,j,k$ such that $b_{i,j,k} \neq 0$, the result follows. 
\end{proof}

The following proposition is a classification of elements of rank $n-1$, which can be thought of as a generalization of Proposition 3.4 in~\cite{carvajal2019covers} and can be though of as an $\looparrowright$ analogue of Theorem 2.48. from~\cite{greuel2007introduction}.

\begin{proposition}\label{prop:A_k}
    Let $(R, \mathfrak{m}, \kappa)$ be a regular local ring of dimension $n$ and let $f \in R$ be of order $2$. If $\rk(f)=n-1$ then one of the following is true:
    \begin{itemize}
        \item $f \looparrowright A_k$ for some $k \in \{2, 3, \dots, \infty \}$.
        \item over $\hat{R}$ (the $\mathfrak{m}-$adic completion of $R$) we have that $f \looparrowright A_\infty$. 
    \end{itemize}
\end{proposition}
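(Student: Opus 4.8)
The plan is to start from the normal form given by the Splitting Lemma: since $\rk(f) = n-1$, Theorem~\ref{prop:split} lets us write $f = u_1 a_1^2 + \cdots + u_{n-1} a_{n-1}^2 + g$ where $\mathfrak{m} = \langle a_1, \dots, a_n\rangle$ and $g \in \mathfrak{m}^3$. Absorbing the units as in Proposition~\ref{prop:Morse} (completing squares, which requires $\chara(\kappa) \neq 2$ and is only a formal manipulation since the $u_i$ are units), and more importantly using the cross-term absorption trick from the proof of Morse's Lemma, I would first reduce $g$ to a power series purely in the single remaining variable $a_n$: every monomial in $g$ of degree $3$ that involves some $a_i$ with $i \leq n-1$ can be folded into a corrected square $u_i' a_i'^2$ exactly as in the proof of Proposition~\ref{prop:Morse}, and iterating (the corrections lie in higher and higher powers of $\mathfrak{m}$) yields, after passing to $\hat{R}$ for convergence, an expression $f = v_1 b_1^2 + \cdots + v_{n-1} b_{n-1}^2 + g(b_n)$ with $g(b_n) \in \langle b_n\rangle^3$ a power series in the single element $b_n$.

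Next I would analyze $g(b_n) = \sum_{j \geq 3} c_j b_n^j$ with each $c_j$ either a unit or zero. Let $k+1 = \min\{ j : c_j \text{ is a unit}\}$ if such a $j$ exists; then $g(b_n) = b_n^{k+1} w$ where $w = c_{k+1} + c_{k+2} b_n + \cdots$ is a unit in $\hat{R}$ (its residue is $\overline{c_{k+1}} \neq 0$). Since $\kappa$ is not assumed quadratically closed I cannot extract an honest $(k+1)$-th root, but this is exactly the situation covered by Definition~\ref{def:can_be_written} and Example~\ref{ex:A}: writing $f = v_1 b_1^2 + \cdots + v_{n-1} b_{n-1}^2 + w\, b_n^{k+1}$ with all of $v_1, \dots, v_{n-1}, w$ units and $b_1, \dots, b_n$ a regular system of parameters is, by Example~\ref{ex:A}(1), precisely the statement $f \looparrowright A_k$. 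If instead no $c_j$ is a unit, then $g(b_n) = 0$ (a power series over a regular sequence has each coefficient a unit or zero, so "no unit coefficient" forces every coefficient to vanish), and then $f = v_1 b_1^2 + \cdots + v_{n-1} b_{n-1}^2$, which by Example~\ref{ex:A}(2) means $f \looparrowright A_\infty$. In this last case the reduction genuinely used completeness, so the conclusion is stated over $\hat{R}$, which is the second bullet; in all other cases the reduction to a power series in $b_n$ can be done with finitely many steps once the degree-$(k+1)$ unit coefficient appears (everything of degree $\leq k+1$ is handled in finitely many steps, and the tail $b_n^{k+2}(\cdots)$ is already a power series in $b_n$), so no completion is needed and we get $f \looparrowright A_k$ over $R$ itself.

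The main obstacle I anticipate is the bookkeeping in the cross-term elimination: one must show that after clearing all mixed monomials of degree $d$ involving a square variable $a_i$ ($i \leq n-1$), the process introduces only new terms of degree $> d$, so that the successive changes of variables $x_{i,l}$ form Cauchy sequences in $\hat{R}$ and the residual "pure $b_n$" part converges to an honest power series in $b_n$ — this is structurally the same argument as the completeness half of the proof of Theorem~\ref{prop:split}, and I would model it line-by-line on that proof, plus the quadratic-form diagonalization (Proposition~3.4 of Chapter~I of~\cite{baeza2006quadratic}) used in Proposition~\ref{prop:Morse} to keep the leading quadratic part diagonal with unit coefficients at each stage. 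The one subtlety worth flagging explicitly is that the rank being exactly $n-1$ (not $n$) is what guarantees there is a single leftover variable $b_n$, so that $g$ really becomes a one-variable power series; if $g$ were identically zero we land in $A_\infty$, and otherwise the order of $g$ in $b_n$ is a well-defined integer $k+1 \geq 3$ determining which $A_k$ we get.
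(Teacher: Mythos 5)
Your overall strategy (Splitting Lemma, then iterated completion of squares pushing the residual into higher powers of the last parameter, with a Cauchy/completeness argument when the process never terminates) is the same as the paper's, but there is a genuine gap at the step where you claim the first bullet, namely $f \looparrowright A_k$ over $R$ itself. You normalize $g$ into a power series in the single element $b_n$ by passing to $\hat{R}$ at the outset, and your attempt to descend back to $R$ in the terminating case rests on the parenthetical claim that, once all mixed monomials of degree $\leq k+1$ are cleared (finitely many steps), ``the tail $b_n^{k+2}(\cdots)$ is already a power series in $b_n$.'' That is false in general: after finitely many monomial-by-monomial absorptions the degree $\geq k+2$ part still contains mixed monomials (e.g.\ $b_1 b_n^{k+1}$), so what you actually have over $R$ is $f = v_1 b_1^2 + \cdots + v_{n-1} b_{n-1}^2 + c\, b_n^{k+1} + h$ with $h$ involving all the variables, which is not yet the normal form of Example~\ref{ex:A}(1); eliminating $h$ requires further square completions that create new mixed terms, i.e.\ an infinite process. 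As written, your argument therefore only gives $f \looparrowright A_k$ over $\hat{R}$, which is strictly weaker than the statement, whose whole point is to distinguish what holds over $R$ from what holds only over $\hat{R}$.

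The paper avoids this by never expanding the residual into monomials. Using the Morse-type absorption one writes the residual exactly as $a_n^2 c_0$ with $c_0 \in \mathfrak{m}$ a genuine ring element; writing $c_0 = \beta_1 a_1 + \cdots + \beta_n a_n$ and completing squares yields the exact identity $f = u_1 a_{1,1}^2 + \cdots + u_{n-1} a_{n-1,1}^2 + c_1 a_n^3$ in $R$ with $c_1 \in R$, and iterating keeps the residual in the exact form $c_l a_n^{l+2}$ at every finite stage, with $a_{i,l} - a_{i,l-1} \in \langle a_n^{l+2}\rangle$. If some $c_l$ is a unit, the identity already holds in $R$ after finitely many steps and Example~\ref{ex:A}(1) gives $f \looparrowright A_{l+1}$ over $R$; completeness is invoked only when no $c_l$ is ever a unit, in which case the $a_{i,l}$ form Cauchy sequences and one gets $A_\infty$ over $\hat{R}$. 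If you repackage your bookkeeping this way (tracking a single coefficient $c_l \in R$ rather than the full monomial expansion), your proof becomes correct and coincides with the paper's; your remaining points (the $g=0$ case, and the observation via Definition~\ref{def:power_series} that a one-variable power series with no unit coefficient is zero) are fine, and the latter is also not needed once the residual is kept as $c_l a_n^{l+2}$.
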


\begin{proof}
    Assume that $\rk(f)=n-1$. Then by Theorem~\ref{prop:split} we can write $f=u_1a_1^2+ \dots +u_{n-1}a_{n-1}^2+g$ for $g \in \mathfrak{m}^3$. If $g=0$ then we clearly have that $f \looparrowright A_\infty$. Otherwise, using the same technique as in Proposition~\ref{prop:Morse} with respect to $a_1, \dots, a_{n-1}$, then up to $\looparrowright$, we can assume that $g=a_n^2c_0$ for some $c_0 \in \mathfrak{m}$. We can write $c_0=\beta_1a_1 + \cdots +\beta_n a_n$. Thus, we get that
    \begin{equation*} 
    \begin{split}
        f &= u_1a_1^2+ \dots +u_{n-1}a_{n-1}^2 + a_n^2(\beta_1a_1 + \cdots +\beta_n c_n) \\
         & = u_1\left(a_1+\frac{a_n^2 \beta_1}{2u_1}\right)^2 + \cdots + u_{n-1}\left(a_{n-1}+\frac{a_n^2 \beta_{n-1}}{2u_{n-1}}\right)^2 + a_n^3\left( \beta_n - \frac{a_n\beta_1^2}{4u_1} - \cdots - \frac{a_n\beta_{n-1}^2}{4u_{n-1}} \right). 
    \end{split}
    \end{equation*}
    Therefore, if we define $a_{i,1}=a_i+\frac{a_n^2 \beta_i}{2u_i}$ for $i\leq n-1$, we get that $f=u_1a_{1,1}^2 + \cdots + u_{n-1}a_{n-1,1}^2 + c_1a_n^3$ for some $c_1 \in R$. If $c_1$ is a unit, then we can conclude that $f \looparrowright A_2$. Otherwise, we can repeat this process to get 
    \begin{equation*}
        f=u_{1}a_{1,k}^2 + \cdots +u_{n-1}a_{n-1, k}^2 + c_ka_{n}^k,
    \end{equation*}
    where $a_{i,k} - a_{i,k-1} \in \langle a_n^k \rangle$.  If $c_k$ is a unit for some $k$ then we get that $f \looparrowright A_{k+1}$. Otherwise, by moving to $\hat{R}$, we can conclude that $\{a_{i,k}\}_{k =1}^\infty$ is a Cauchy sequence, and therefore it converges to some $b_i \in \hat{R}$. Observe that indeed $b_1, \dots, b_{n-1}, a_n$ generate the maximal ideal  $\mathfrak{m}\hat{R}$ of $\hat{R}$.  Since $c_ka_{n}^k \to 0$ as $k \to \infty$, we can conclude that $f = u_1b_1^2 + \cdots +  u_{n-1}b_{n-1}^2 \in \hat{R}$, i.e. $f \looparrowright A_\infty$ as an element of $\hat{R}$. 
\end{proof}

\begin{remark}
    \textup{Note that if we assume that $R$ is complete then we can provide a simpler proofs for Proposition~\ref{prop:Morse} and Proposition~\ref{prop:A_k}. If $f$ has rank $n$ then in this case clearly $f \looparrowright x_1^2+ \cdots + x_n^2$. If $f$ has rank $n-1$ then we can write $f = a_1^2 + \cdots +a_{n-1}^2 +g$ where $g=\sum_{i=0}^\infty u_i a_n^i$ is a power series in $a_n$ (or 
    $g=0$ which gives us that $f \looparrowright A_\infty$), and therefore we can write $g=ua_n^k$ for some $k \geq 3$, which gives us that $f=a_1^2 + \cdots +a_{n-1}^2 + ua_n^k \looparrowright x_1^2 +\cdots +x_{n-1}^2 + x_n^k$.}
\end{remark}



\section{$ADE$ classification over $R$}\label{Sec:ADE}

We now turn to the~\nameref{thm:mainintro}, as it appears in the introduction (Recall that $(R, \mathfrak{m}, \kappa)$ is complete regular local ring whose quotient field $\kappa$ is algebraically closed in this case), in the same vein as Theorem B in~\cite{buchweitz1987cohen} and Theorem 1.4 in~\cite{greuel1990simple}.\\

By Proposition~\ref{prop:n-2}, it is enough to classify elements of order $2$ and rank at least $\dim(R)-2$ (in the case $\dim(R)>2$). Note that by Proposition~\ref{prop:Morse} and by Proposition~\ref{prop:A_k} we can completely classify elements of rank $\dim(R)$ and $\dim(R)-1$, respectively. Therefore, it is enough to understand the case where $\rk(f) = \dim(R)-2$, in which case by Theorem~\ref{prop:split} we can write $f \looparrowright x_1^2 + \cdots + x_{\dim(R)-2}^2 + g(x_{\dim(R)-1}, x_{\dim(R)})$ for some $g$ of order $3$ that is a power series in $_{\dim(R)-1}$ and $ x_{\dim(R)}$ with $\mathfrak{m}=\langle x_1, \dots, x_{\dim(R)}\rangle$. Therefore, our goal for this section is to classify (up to $\looparrowright$) elements of order $3$ over $R$ with $\dim(R)=2$ with $\mathfrak{m}=\langle x, y \rangle$ (as a similar computation can be preformed for power series over a regular sequence with two elements). \\

We start by understanding the order $3$ "component" of $f$. This is a version of Proposition 2.50 in~\cite{greuel2007introduction} and of Proposition 9.2.12 in~\cite{de2013local}, and a generalization of Lemma 3.5. in~\cite{carvajal2019covers}. 

\begin{lemma}\label{lem:jet_3_2}
    Let $(R, \mathfrak{m}, \kappa)$ be a $2$ dimensional regular local ring where $\kappa$ is algebraically closed field. Let $f \in R$ be of order $3$. Then there exists some $g \in \mathfrak{m}^4$ and some $x,y$ that generate $\mathfrak{m}$ such that 
    \begin{enumerate}
        \item $f-g = xy\left(x+y\right)$,
        \item $f-g = x^2y$,
        \item $f-g = x^3$,
    \end{enumerate}
\end{lemma}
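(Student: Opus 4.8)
The plan is to reduce the statement to the classification of binary cubic forms over an algebraically closed field $\kappa$ with $\chara(\kappa)\neq 2$, and then lift the classification back to $R$. First I would write $f = f_3 + f_{>3}$ where $f_3$ is the degree-$3$ part of $f$ in the chosen generators $x_0,y_0$ of $\mathfrak{m}$ and $f_{>3}\in\mathfrak{m}^4$; passing to the associated graded ring $\mathrm{gr}_\mathfrak{m}(R)\cong\kappa[s,t]$, the image $\overline{f_3}$ is a nonzero binary cubic form over $\kappa$. Over an algebraically closed field of characteristic $\neq 2$ (indeed $\neq 3$ is not even needed here since a homogeneous cubic in two variables always factors into linear forms over $\bar\kappa$), $\overline{f_3}$ is a product of three linear forms, and up to a linear change of coordinates in $\kappa[s,t]$ exactly one of three things happens: the three linear factors are pairwise non-proportional (giving $st(s+t)$ after scaling), exactly two coincide (giving $s^2 t$), or all three coincide (giving $s^3$). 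The standard argument is: pick a linear factor $\ell_1$, make it a coordinate, say $\ell_1 = s$; then $\overline{f_3}/s$ is a binary quadratic in $s,t$ which again factors, and a further coordinate change and rescaling normalizes the remaining two factors to match one of the three listed normal forms.

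Next I would lift this back to $R$. The linear change of coordinates on $\mathrm{gr}_\mathfrak{m}(R)$ corresponds to an invertible matrix $W\in\Mat_2(\kappa)$ acting on $\langle x_0, y_0\rangle/\mathfrak{m}^2$; lifting $W$ to an invertible matrix $\tilde W\in\Mat_2(R)$ (possible since $R\to\kappa$ is surjective and $GL_2(R)\to GL_2(\kappa)$ is surjective) produces a new generating pair $x_1,y_1$ of $\mathfrak{m}$ with $\vec{(x_1,y_1)} = \tilde W \vec{(x_0,y_0)}$. In the new coordinates the degree-$3$ part of $f$ is, modulo $\mathfrak{m}^4$, congruent to $xy(x+y)$, $x^2 y$, or $x^3$ respectively, possibly after absorbing a unit: indeed the coefficients of the monomials of degree $3$ are now units-or-zero matching the chosen normal form $\mod\mathfrak{m}$. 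The one subtlety is eliminating the unit scalars. For $x^3$: if $f_3 = u\,x_1^3 + (\text{lower-priority degree-3 terms in }\mathfrak{m})$, one rescales $x := v x_1$ where $v^3$ cancels $u$ — but taking cube roots of units requires Henselian-ness, which is NOT assumed, so instead I would leave the unit inside $g$: write $f - g = x^3$ by absorbing $(u-1)x_1^3$ together with all other degree-$3$ terms whose coefficient lies in $\mathfrak{m}$ into $g\in\mathfrak{m}^3$ — wait, that fails since $(u-1)x_1^3\notin\mathfrak{m}^4$ in general. Let me reconsider: the cleanest route is to rescale the \emph{variable}, not worry about cube roots: replacing $x_1$ by $a x_1$ for a unit $a$ scales $x_1^3$ by $a^3$; since $\kappa$ is algebraically closed every element of $\kappa^\times$ is a cube, so choosing $a\in R$ with $a^3\equiv u^{-1}\pmod{\mathfrak m}$ makes the leading coefficient a $1$-unit $1+m$, $m\in\mathfrak m$; then $(1+m)x^3 = x^3 + mx^3$ and $mx^3\in\mathfrak m^4$ can be absorbed into $g$. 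The analogous trick (rescaling $x$ and $y$ independently, then a shear) normalizes the $x^2y$ and $xy(x+y)$ cases; for $xy(x+y)$ one also uses that the cross-ratio of the three roots is fixed by $PGL_2$ so the three-distinct-roots case is genuinely a single orbit over $\bar\kappa$.

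The key steps, in order: (1) reduce to $\mathrm{gr}_\mathfrak{m}(R)\cong\kappa[s,t]$ and observe $\overline{f_3}$ is a nonzero binary cubic; (2) factor $\overline{f_3}$ into linear forms over $\kappa$ (algebraically closed) and split into the three cases by multiplicity pattern; (3) normalize each case to $st(s+t)$, $s^2t$, or $s^3$ by a linear change of variables and rescaling, using that $\kappa^\times$ consists of cubes/squares as needed; (4) lift the linear change of variables to an automorphism of generators of $\mathfrak{m}$ via surjectivity of $GL_2(R)\to GL_2(\kappa)$; (5) absorb the resulting $\mathfrak{m}^4$-error and the $1$-unit discrepancies into $g\in\mathfrak{m}^4$.

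I expect the main obstacle to be step (5) combined with the absence of a Henselian hypothesis: one cannot freely take cube roots in $R$, so the unit scalar on the leading term must be killed purely by rescaling variables (using that $\kappa$ is algebraically closed, hence $\kappa^\times$ is divisible) down to a $1$-unit, and only then can the residual $\mathfrak{m}$-multiple be pushed into the degree-$\geq 4$ remainder $g$. Keeping careful track of which degree-$3$ monomials have unit versus non-unit coefficients, and checking that after the variable rescaling \emph{all} excess degree-$3$ terms land in $\mathfrak{m}^4$ once the prescribed normal form is subtracted, is the delicate bookkeeping; everything else is the classical classification of binary cubics.
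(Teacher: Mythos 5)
Your proposal is correct and is essentially the paper's argument: the paper writes $f-g$ as a binary cubic with unit-or-zero coefficients, factors the dehomogenized cubic modulo $\mathfrak{m}$ over the algebraically closed residue field, lifts the roots to $R$, and absorbs all discrepancies lying in $\mathfrak{m}^4$ into $g$, which is the same mechanism as your normalization of the cubic class in $\mathrm{gr}_\mathfrak{m}(R)\cong\kappa[s,t]$ followed by lifting the change of coordinates through $GL_2(R)\twoheadrightarrow GL_2(\kappa)$. Your step (5) — killing the leading unit only up to a $1$-unit by solving $a^3\equiv u\ (\mathrm{mod}\ \mathfrak{m})$ in the residue field and pushing the remaining $\mathfrak{m}$-multiple of a cubic monomial into $g\in\mathfrak{m}^4$ — is exactly the right way to get the stated equalities without any Henselian hypothesis (the cross-ratio remark is unnecessary, since $PGL_2$ is transitive on triples of distinct points, but this does not affect the argument).
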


\begin{proof}
    Since $f$ is of order $3$ then for $\mathfrak{m}=\langle x,y\rangle$, there exists some $g \in \mathfrak{m}^4$ such that we can write $f -g = ay^3 + by^2x+ cyx^2 + dx^3$ for some $a,b,c,d\in R$ that are either units or zero. So we classify $ay^3 + by^2x+ cyx^2 + dx^3$ based on which of $a,b,c,d$ are units and which are zeros.\\
    
    If $a=d=0$ then we can write $by^2x+ cyx^2 = y x \left(by + cx\right)$. If $b=c=0$ then we get that $f-g=0$, which is impossible since $f \notin \mathfrak{m}^4$. If only one of them is zero, then without loss of generality $c=0 \neq b$, and so $f-g=y x b y = y^2x_1$ for $x_1=bx$. If $b,c \neq 0$ then we have that $y x \left(by + cx\right)= b y x \left(y + \frac{c}{b} x\right) \looparrowright yx\left(y+x\right)$. \\

    If either $a \neq 0$ or $d\neq 0$ we can assume without loss of generality that $a \neq 0$. Since we are working up to units, then we have that $f-g = a(y^3 + (a^{-1}b)y^2x + (a^{-1}c)yx^2 + (a^{-1})dx^3)$, and so we can assume that $a=1$. Now, set $Q\left(t\right)=t^3+bt^2 +ct+d \in R[t]$ and set $q(t) = Q(t) \mod \mathfrak{m} \in \kappa[t]$. Observe that $f-g = x^3 Q(\frac{y}{x})$.  Since $\kappa$ is an algebraically closed field, then $q(t)$ splits into a product $q\left(t\right)=\left(t-\alpha\right)\left(t-\beta\right)\left(t-\gamma\right)$. Therefore, since $q(t) = Q(t) \mod \mathfrak{m}$, we can conclude for some lifts $\alpha_1, \beta_1, \gamma_1 \in R$ of $\alpha,\beta, \gamma \in \kappa$, we have that $Q(t)=(t-\alpha_1)(t-\beta_1)(t-\gamma_1)+q_1(t)$ for some $q_1(t)=\theta_2t^2 + \theta_1t+\theta_0$ with $\theta_0, \theta_1, \theta_2 \in \mathfrak{m}$. Thus, we can conclude that $ f-g = x^3Q(\frac{y}{x}) =  \left(y - \alpha_1 x\right)\left(y - \beta_1 x\right)\left(y - \gamma_1 x\right) +x^3q_1(\frac{y}{x})$. Note that since  $x^3q_1(\frac{y}{x}) =  \theta_2y^2x+\theta_1 yx^2 + \theta_0x^3$ with $\theta_0, \theta_1, \theta_2 \in \mathfrak{m}$, then $x^3q_1(\frac{y}{x}) \in \mathfrak{m}^4$ and so for $g_1=g+x^3q_1(\frac{y}{x})$ we have that $f-g_1=\left(y - \alpha_1 x\right)\left(y - \beta_1 x\right)\left(y - \gamma_1 x\right)$. Then there are three cases: 

    \begin{enumerate}
        \item If $\alpha_1 = \beta_1=\gamma_1$ then for $y_1=y-\alpha_1x$ we have that $f-g_1 = y_1^3$ with $\mathfrak{m}=\langle x, y_1 \rangle$.

        \item If, without loss of generality, $\alpha_1=\beta_1 \neq \gamma_1$ then by setting $x_1=y-\alpha_1x$ and $y_1=y-\gamma_1x$ then $f-g_1=y_1^2x_1$. So:
        \begin{itemize}
            \item If  $\alpha_1-\gamma_1 \in \mathfrak{m}$  then 
            \begin{equation*}
                f-g_1 = (y-\alpha_1x)^2(y-\gamma_1x)=(y-\alpha_1x)^3-(y-\alpha_1x)^2x(\gamma_1-\alpha_1),
            \end{equation*}
             \noindent which reduces to the previous case, as $x(\gamma_1-\alpha_1)(y-\alpha_1x) \in \mathfrak{m}^4$.
            \item If $\alpha_1-\gamma_1 \notin \mathfrak{m}$, i.e. is invertible, then $\mathfrak{m}=\langle x_1, y_1 \rangle$ since we have that $x=\frac{\gamma_1x_1-\alpha_1x_1}{\gamma_1-\alpha_1}$ and $y=\frac{y_1-x_1}{\gamma_1-\alpha_1}$.
        \end{itemize}
          \item If $\alpha_1, \beta_1, \gamma_1$ are all different then:
          \begin{itemize}
              \item If, without loss of generality, $\alpha_1-\beta_1 \in \mathfrak{m}$ then
              \begin{equation*}
             f-g_1=(y-\alpha_1x)^2(y-\gamma_1 x)-(y-\alpha_1x)(y-\gamma_1 x)x(\beta_1-\alpha_1),
            \end{equation*}
            which reduces to the previous case. 
            \item If $\alpha_1-\beta_1, \beta_1-\gamma_1, \gamma_1-\alpha_1$ are all units, then we have that 
            \begin{equation*}
                (y-\gamma_1x)=\left(\frac{\gamma_1-\beta_1}{\alpha_1-\beta_1}\right)(y-\alpha_1x) + \left(\frac{\alpha_1-\gamma_1}{\alpha_1-\beta_1}\right)(y-\beta_1x), 
        \end{equation*}
              and since both $\frac{\gamma_1-\beta_1}{\alpha_1-\beta_1}$ and $\frac{\alpha_1-\gamma_1}{\alpha_1-\beta_1}$ are units, then by setting $x_2=\left(\frac{\gamma_1-\beta_1}{\alpha_1-\beta_1}\right)(y-\alpha_1x)$ and $y_2=\left(\frac{\alpha_1-\gamma_1}{\alpha_1-\beta_1}\right)(y-\beta_1x)$ we can conclude that $f-g_1=\frac{(\alpha_1-\beta_1)^2}{(\gamma_1-\beta_1)(\alpha_1-\beta_1)}x_2y_2(x_2+y_2) \looparrowright xy(x+y)$.
          \end{itemize}
    \end{enumerate}  
\end{proof}

\begin{definition}
    For a given $f$ of order $3$, the term from $\{xy\left(x+y\right), x^2y, y^3\}$ of which $f-g$ can be written as (via Lemma~\ref{lem:jet_3_2}) is called the \textbf{$3-$jet of $f$} and is denoted by $\jet_3(f)$. 
\end{definition}

\begin{remark}
    \textup{The term “jet” is used here as an analogy to the notion of a jet of a power series—that is, a Taylor polynomial of a given degree. For further details, see Section 2.2 in~\cite{greuel2007introduction} or Section 3.1 in~\cite{boubakri2009hypersurface}.}
\end{remark}

Therefore, our goal is to classify $f$ based upon the value of $\jet_3(f)$, as presented in Lemma~\ref{lem:jet_3_2}. We start with $x_1x_2\left(x_1+x_2\right)$ and $x_1^2x_2$, which correspond to $D_k$ for $k \in \{4, \dots, \infty\}$. Their classification will rely on the following Lemma. 

\begin{lemma}\label{lem:D_k_det}
    Let $(R, \mathfrak{m}, \kappa)$ be a $2-$dimensional complete regular local ring such that $\kappa$ is quadratically closed. Let  $k \geq 4$ and let $g \in \mathfrak{m}^{k+1}$. Then $y^2x+x^k+g \looparrowright y^2x+x^k$.
\end{lemma}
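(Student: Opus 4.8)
The plan is to eliminate the higher-order tail $g \in \mathfrak{m}^{k+1}$ by a convergent sequence of coordinate changes, exactly in the spirit of the completeness half of Theorem~\ref{prop:split} and of Proposition~\ref{prop:A_k}. Write $h_0 = y^2x + x^k$ and $\mathfrak{m} = \langle x, y\rangle$. The idea is that the ideal $\langle \partial_x h_0, \partial_y h_0\rangle = \langle y^2 + kx^{k-1},\, 2xy\rangle$ together with $h_0$ controls $\mathfrak{m}^{k+1}$ up to the action of coordinate changes; since $k \geq 4$, one checks that $\mathfrak{m}^{k+1} \subseteq \langle x, y\rangle\cdot\langle \partial_x h_0, \partial_y h_0\rangle + \langle h_0 \rangle \cdot \mathfrak{m}$ (a finite-determinacy type containment, verified degree by degree in $\operatorname{gr}_\mathfrak{m}(R) \cong \kappa[x,y]$). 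Concretely, $2xy$ and $y^2 + kx^{k-1}$ generate, modulo $\mathfrak{m}^{k+1}$ and modulo the principal term $h_0$, a subspace of $\mathfrak{m}^{k}/\mathfrak{m}^{k+1}$ large enough that every monomial of degree $\geq k+1$ can be absorbed.

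The main step is the inductive coordinate change. Suppose at stage $\ell \geq k+1$ we have generators $x_\ell, y_\ell$ of $\mathfrak{m}$ with $f = y_\ell^2 x_\ell + x_\ell^k + g_\ell$ where $g_\ell \in \mathfrak{m}^{\ell}$. Using the containment above applied in degree $\ell$, write the degree-$\ell$ part of $g_\ell$ as $p(x_\ell,y_\ell)(y_\ell^2 + kx_\ell^{k-1}) + q(x_\ell,y_\ell)\cdot 2x_\ell y_\ell + r\cdot(y_\ell^2 x_\ell + x_\ell^k)$ modulo $\mathfrak{m}^{\ell+1}$, with $p, q \in \mathfrak{m}^{\ell - k + 1}$ and $r \in \mathfrak{m}^{\ell - k}$ (using $\ell \geq k+1$, these exponents are $\geq 1$, so $1 + r$ is a unit and the $r$-term is absorbed by rescaling $f$, which is harmless for $\looparrowright$). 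Then set $x_{\ell+1} = x_\ell + q$ and $y_{\ell+1} = y_\ell + p$ (recalling $\operatorname{char}\kappa \neq 2$, so the coefficient $2$ is a unit); a direct expansion shows $y_{\ell+1}^2 x_{\ell+1} + x_{\ell+1}^k = y_\ell^2 x_\ell + x_\ell^k + (\text{degree-}\ell\text{ part of }g_\ell) + (\text{terms in }\mathfrak{m}^{\ell+1})$, so $f = y_{\ell+1}^2 x_{\ell+1} + x_{\ell+1}^k + g_{\ell+1}$ with $g_{\ell+1} \in \mathfrak{m}^{\ell+1}$, and $x_{\ell+1}, y_{\ell+1}$ still generate $\mathfrak{m}$ since they differ from $x_\ell, y_\ell$ by elements of $\mathfrak{m}^2$.

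Finally I would take the limit: since $x_{\ell+1} - x_\ell, y_{\ell+1} - y_\ell \in \mathfrak{m}^{\ell - k + 1}$, the sequences $\{x_\ell\}$ and $\{y_\ell\}$ are Cauchy, and completeness of $R$ gives limits $x_\infty, y_\infty$ which still generate $\mathfrak{m}$ (being congruent to $x, y$ mod $\mathfrak{m}^2$). Since $g_\ell \in \mathfrak{m}^\ell \to 0$, we get $f = y_\infty^2 x_\infty + x_\infty^k$, and applying Definition~\ref{def:can_be_written} with $H(z_1, z_2) = z_2^2 z_1 + z_1^k$ (via $a_i = x_i$ for the original generators realizing $f$ as $y^2x + x^k + g$ and $b_1 = x_\infty, b_2 = y_\infty$) yields $y^2 x + x^k + g \looparrowright y^2 x + x^k$. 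The step I expect to be the main obstacle is the determinacy containment $\mathfrak{m}^{k+1} \subseteq \mathfrak{m}\langle \partial_x h_0, \partial_y h_0 \rangle + \langle h_0\rangle\mathfrak{m}$ — getting the bookkeeping of which monomials of each degree are hit right, and confirming it genuinely needs $k \geq 4$ (for $k = 3$ the singularity is different and this would fail) — and making sure the rescaling by the unit $1 + r$ at each stage is tracked correctly so that it does not interfere with convergence.
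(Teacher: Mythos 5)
Your overall strategy (a determinacy-style Newton iteration against the Jacobian-type ideal of $h_0=y^2x+x^k$, followed by a Cauchy-sequence/completeness argument) differs from the paper's, but the inductive step has a genuine gap. The graded containment $\mathfrak{m}^{k+1}\subseteq \mathfrak{m}\langle \partial_xh_0,\partial_yh_0\rangle+\mathfrak{m}\langle h_0\rangle$ is itself correct (mixed monomials $x^ay^b$ with $a,b\geq 1$ are multiples of $xy$; $y^\ell$ uses $y^{\ell-2}(y^2+kx^{k-1})$; $x^\ell$ uses $\tfrac1k x^{\ell-k+1}(y^2+kx^{k-1})-\tfrac1{2k}x^{\ell-k}y\cdot 2xy$), but the deduction ``a direct expansion shows \dots terms in $\mathfrak{m}^{\ell+1}$'' fails. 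To remove the $x^\ell$-component of $g_\ell$ you are forced to use shifts of order exactly $\ell-k+1$ (an $x$-shift $\sim x^{\ell-k+1}$ and a $y$-shift $\sim x^{\ell-k}y$), and the second-order terms of the substitution, e.g.\ $(\text{$y$-shift})^2x_\ell$ and $2(\text{$y$-shift})(\text{$x$-shift})\,y_\ell$, have order $2(\ell-k+1)+1=2\ell-2k+3$. This is $\leq \ell$ precisely when $\ell\leq 2k-3$, and your induction starts at $\ell=k+1\leq 2k-3$ for every $k\geq 4$: for $k=4$ the error reappears in the same degree you were trying to kill, and for $k\geq 5$ it reappears in strictly lower degree (the relevant coefficient, $-\tfrac34$ times the square of the one you removed, does not cancel in general). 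So $g_{\ell+1}\in\mathfrak{m}^{\ell+1}$ is false as claimed and the iteration does not close; this is exactly the well-known reason why determinacy arguments require more than the tangent-space condition with $\mathfrak{m}\cdot j(h_0)$ (one needs an $\mathfrak{m}^2 j(h_0)$-type condition plus a more careful iteration, or a different elimination scheme). A smaller slip: with your labels the substitution should interchange $p$ and $q$ (and carry a sign), since the $x$-shift produces the $\partial_x h_0$-term; that part is cosmetic.

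The natural repair essentially reproduces the paper's proof: instead of removing the parts of $g$ divisible by $y^2x$ or by $x^k$ additively, absorb them multiplicatively, writing $y^2x+x^k+g=y^2x(1+b)+x^k(1+c)+ay^k$ and using that $\looparrowright$ tolerates unit multiples of $f$ and that square roots of units exist ($R$ Henselian, $\kappa$ quadratically closed); this reduces to $y^2x+x^k+a_1y^k$, and the only genuinely obstructing terms are pure powers of $y$, which the paper eliminates with the explicit substitution $x\mapsto x+uy^{r-2}$, whose error $u_1y^{(r-2)k}$ really does have much higher order, followed by a limit argument just like your final paragraph. So your limit step is fine, but the degree-by-degree elimination must be reorganized along these lines before the proof is correct.
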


\begin{proof}

    First, we claim that if $u \in R$ is a unit and $r>k>2$ then $  y^2x +x^k + u y^r \looparrowright   y^2x +x^k$. Then by setting $x_1= x+u y^{r-2}$ we can write 
\begin{equation*}
      y^2x +x^k + u y^r =   y^2\left(x+u y^{r-2}\right) +x^k =   y^2x_1 +\left(x_1-u y^{r-2}\right)^k. 
\end{equation*}
Therefore, there exists some $a \in \mathfrak{m}$ and some unit $v$ such that 
\begin{equation*}
   y^2x_1 +\left(x_1-u y^{r-2}\right)^k= y^2x_1\left(1+a\right)+ x_1^k +v y^{\left(r-2\right)k} = y_1^2x+ x^k+u_1 y_1^{\left(r-2\right)k} 
\end{equation*}
 for some unit $u_1$ and for $y_1=y(1+a)^{\frac{1}{2}}$ (noting that the square root of $1+a$ exists since $R$ is Henselian and quadratically closed). By repeating this process we get a sequence  $\{ (x_i, y_i) \}_{i=1}^\infty$ such that $\mathfrak{m} = \langle x_i, y_i \rangle$ and $y^2x +x^k + u y^r=y_i^2x_i+x_i^k+u_i y_i^{r_i}$ for a sequence of units $\{u_i\}_{i=1}^\infty$ and for an increasing sequence of positive integers $r_i \to \infty$. In addition, note that $x_i -x_{i+1}, y_i- y_{i+1} \in \mathfrak{m}^{r_i}$. Thus the sequences $\{x_i\}_{i=1}^\infty$ and $\{y_i\}_{i=1}^\infty$ are Cauchy and since $R$ is a complete ring then $\{(x_i,y_i)\}_{i=1}^\infty$ converges to some $(x_0,y_0)$. Since $u_i y_i^{r_i} \to 0$ as $i \to \infty$, we get that $y^2x +x^k + u y^r  = y_0^2x+x_0^k$. \\

Now, for the general case, let $g \in \mathfrak{m}^{k+1}$. We can write $g=a y^k+b y^2x + cx^k$ for some $a \in R$, $b \in \mathfrak{m}^{k-3}$, and $c \in \mathfrak{m}$. Therefore, we can write
\begin{equation*}
    y^2x+x^k+g =   y^2x\left(1+b\right) +x^k\left(1+c\right)+a y^k \looparrowright   y^2x +x^k+a_1 y^k,
\end{equation*}

\noindent for some $a_1 \in R$. If $a_1$ is a unit, then we are done by the first part of the proof. If not, we can write $a_1= y\alpha_1+x\beta_1$, and so 
\begin{equation*}
    y^2x +x^k+a_1 y^k=y^2x\left(1+\beta_2 y^{k-2}\right) +x^k+\alpha_1 y^{k+1} \looparrowright   y^2x +x^k+a_2 y^{k+1}
\end{equation*}
 for some $a_2$. Now, if $a_2$ is a unit then we are done by the first part. Otherwise we can continue this process to get a collection $\{a_r\}_{r=1}^\infty$ such that $y^2 x +x^k +g \looparrowright y^2x +x^k+a_2 y^{k+r}$. If there exists some $r$ such that $a_r$ is a unit $y^2x+x^k+g  \looparrowright   y^2x +x^k + a_r y^{k+r}$, which reduces to the previous case.\\

If $a_r$ is not a unit for any $r$, then as in the first part, 
we get a sequence  $\{ (x_i, y_i) \}_{i=1}^\infty$   such that $\mathfrak{m} = \langle x_i, y_i \rangle$ and $y^2 x +x^k +g = y_i^2x +x_i^k+a_2 y_i^{k+i}$. Since for every $i_1 < i_2$ we have that $x_{i_1}-x_{i_2}, y_{i_1}-y_{i_2} \in \mathfrak{n}^{k+i_1}$, we get that $\{(x_i, y_i)\}_{i=1}^\infty$ is a Cauchy sequence of elements in $R$. Yet, since $R$ is a complete ring, then $(x_i,y_i) \to (x_0, y_0)$. Therefore, we can conclude that $y^2 x +x^k +g = y_0^2x +x_0^k$ as desired. 
\end{proof}

\begin{remark}
    \textup{Lemma~\ref{lem:D_k_det} (and in fact, Proposition~\ref{prop:Morse} and Proposition~\ref{prop:A_k} as well) can be thought of as an example of determinacy, inspired by the field case as explained in Section 3.1 of~\cite{boubakri2009hypersurface} (which in turn are based on Section 2.2 of~\cite{greuel2007introduction} in the characteristic zero case, which were expanded upon in Section 2 of~\cite{boubakri2012invariants} and in~\cite{greuel2019finite}). For more information on determinacy of general group actions, see~\cite{boix2022pairs}. In addition, determinacy over local rings that are algebras over an algebraically closed field of characteristic zero have been studied by Belitskii and Kerner in~\cite{belitskii2012matrices, belitskii2016finite, belitskii2019finite}.}
\end{remark}

\begin{proposition}
    Let $(R, \mathfrak{m}, \kappa)$ be a $2-$dimensional complete regular local ring where $\kappa$ is algebraically closed field and let $f \in R$ be of order $3$. Then: 
    \begin{enumerate}
        \item  $\jet_3(f) = xy(x+y)$ if and only if $f \looparrowright D_4$.
        \item $\jet_3(f) = x^2y$ if and only if $f \looparrowright D_k$ for $k \in \{5, \dots, \infty\}$. 
    \end{enumerate}
\end{proposition}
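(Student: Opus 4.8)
The plan is to prove each equivalence by treating the two implications separately; the ``only if'' directions drop out of a short look at cubic parts, and the ``if'' directions are the real content, carried out by a determinacy/absorption argument in the style of Lemma~\ref{lem:D_k_det} together with one explicit linear change of generators. For the ``only if'' direction of (1), suppose $f\looparrowright D_4$: by Definition~\ref{def:can_be_written} there are a polynomial $H$ with coefficients in $\{0,1\}$ and generators $\vec a,\vec b$ of $\mathfrak m$ with $\langle f\rangle=\langle H(\vec a)\rangle$ and $\langle D_4\rangle=\langle H(\vec b)\rangle$. Since $f$ and $D_4$ both have order $3$, the terms of $H$ of degree ${<}3$ must vanish, and passing to $\operatorname{gr}_\mathfrak m(R)\cong\kappa[z_1,z_2]$ one sees that the cubic part of $f$ and the cubic part of $D_4$ are, up to a nonzero scalar, the images of the degree-$3$ part $H_3$ of $H$ under the two invertible linear substitutions $z_i\mapsto\bar a_i$ and $z_i\mapsto\bar b_i$; in particular they have the same factorization type over $\kappa$. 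As $\chara(\kappa)\neq 2$ and $\kappa$ is algebraically closed, the cubic part of $D_4=x_1x_2^2+x_1^3$ factors as $x_1(x_2-ix_1)(x_2+ix_1)$ with three distinct roots, hence so does that of $f$, and this occurs for exactly one of the three cases of Lemma~\ref{lem:jet_3_2}, forcing $\jet_3(f)=xy(x+y)$. The same reasoning applied to $D_k$ with $k\ge 5$ and to $D_\infty$, whose cubic part is $x_1x_2^2$ (a simple line and a double line), yields $\jet_3(f)=x^2y$.

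For the ``if'' direction of (1), assume $\jet_3(f)=xy(x+y)$, so by Lemma~\ref{lem:jet_3_2} we may write $f=xy(x+y)+g$ with $g\in\mathfrak m^4$ for suitable generators $x,y$. The key ingredient is the purely graded fact that the ideal of $\kappa[z_1,z_2]$ generated by $z_1^2z_2+z_1z_2^2$, $2z_1z_2+z_2^2$, $z_1^2+2z_1z_2$ contains every homogeneous cubic: when $\chara(\kappa)\notin\{2,3\}$ the last two forms already span the cubics, and when $\chara(\kappa)=3$ one additionally uses $z_1^2z_2+z_1z_2^2$, which is a short direct verification. Granting this, I would run the iteration: starting from $wf=x'y'(x'+y')+g$ with $w$ a unit, $x',y'$ generators, and $g\in\mathfrak m^m$ for some $m\ge 4$, write the homogeneous degree-$m$ part of $g$ (in $\bar x',\bar y'$) as $P\cdot(x'^2y'+x'y'^2)+Q_1(2x'y'+y'^2)+Q_2(x'^2+2x'y')$ with $P$ of degree $m-3$ and $Q_1,Q_2$ of degree $m-2$; then after a change of generators of the form $x'\mapsto x'+Q_1$, $y'\mapsto y'+Q_2$ (changing the generators by elements of $\mathfrak m^{m-2}$) and multiplication of $f$ by a suitable unit congruent to $1$ modulo $\mathfrak m^{m-3}$, the degree-$m$ part is cancelled and the error moves into $\mathfrak m^{m+1}$. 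Because $R$ is complete the generators and the unit converge, giving $\langle f\rangle=\langle a_1a_2(a_1+a_2)\rangle$ for some generators $a_1,a_2$; and an explicit linear change of generators (again using $i\in\kappa$) shows $\langle D_4\rangle=\langle b_1b_2(b_1+b_2)\rangle$ as well, so with $H=z_1^2z_2+z_1z_2^2$ we conclude $f\looparrowright D_4$.

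For the ``if'' direction of (2), assume $\jet_3(f)=x^2y$, so $f=x^2y+g$ with $g\in\mathfrak m^4$. Using that any multiple of $x^2$ may be absorbed into the generator $y$ (as $x^2y+x^2s=x^2(y+s)$), I would first remove, degree by degree, every monomial of $g$ divisible by $x^2$ (the substitutions $y\mapsto y+(\text{element of }\mathfrak m^2)$ converge by completeness), reducing to $f\looparrowright x^2y+A(y)+xB(y)$ with $A,B$ power series in $y$ of orders $\ge 4$ and $\ge 3$ respectively. Writing $A=y^4\tilde A$, $B=y^3\tilde B$ and completing the square in $x$ (legitimate since $\chara(\kappa)\neq 2$) gives $f\looparrowright y(x_1^2+y^3C)$ for some $C\in R$. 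If $C=0$ then $f\looparrowright D_\infty$. If $C$ is a unit, one absorbs it by rescaling $x_1$ with a square root of $C$ (possible since $\chara(\kappa)\neq 2$ and $\kappa$ is algebraically closed) and finds $\langle f\rangle=\langle b_1(b_2^2+b_1^3)\rangle$, i.e.\ $f\looparrowright D_5$. If $C\in\mathfrak m$ is nonzero, write $C=x_1\delta+y\gamma$ and complete the square in $x$ once more to obtain $f\looparrowright y(x_2^2+y^4C_1)$, which raises the exponent of $y$. Iterating: either at some stage $C_j$ becomes a unit, and the same matching (or, alternatively, Lemma~\ref{lem:D_k_det}) gives $f\looparrowright D_k$ for the corresponding $k\ge 5$, or it never does, in which case the generators form a Cauchy sequence and the limit yields $f=yx_\infty^2$, i.e.\ $f\looparrowright D_\infty$.

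The step I expect to be the main obstacle is the absorption of higher-order terms in both ``if'' directions: showing that the relevant perturbations can be eliminated by iterated changes of generators (and multiplication by units) with the iterates converging in $R$ — exactly the kind of estimate carried out in the proof of Lemma~\ref{lem:D_k_det}. Within that, the case $\chara(\kappa)=3$ needs separate attention, because the two partial derivatives of $xy(x+y)$ no longer span the cubics modulo $\mathfrak m^4$, although adjoining $xy(x+y)$ itself restores this; one also has to be a little careful about the (standard but slightly delicate) step of pushing $g$ into the form $A(y)+xB(y)$ in a possibly mixed-characteristic complete regular local ring.
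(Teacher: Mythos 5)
Your proposal is correct, and it overlaps with the paper most closely in part (2): the paper also absorbs terms step by step, watches for the first coefficient that becomes a unit (then invokes Lemma~\ref{lem:D_k_det} to kill the remaining tail and land on $D_{k}$), and otherwise passes to the limit in the complete ring to obtain $D_\infty$; your version, by factoring out $y$ and completing the square exactly at each stage, keeps the normal form exact so that the determinacy lemma becomes optional rather than essential. The genuine divergence is in part (1): the paper first converts $xy(x+y)$ into the shape $a^3+ab^2$ by a linear change of generators (using a square root of $-1$, available since $R$ is Henselian and $\kappa$ is algebraically closed) and then appeals to Lemma~\ref{lem:D_k_det}, whereas you prove $3$-determinacy of $xy(x+y)$ directly, via the graded fact that the cubic together with its two ``partials'' generates all forms of degree $\ge 3$ in $\operatorname{gr}_{\mathfrak m}(R)\cong\kappa[z_1,z_2]$, followed by a convergent sequence of generator changes and unit multiplications. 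Your route is self-contained and makes the $\chara(\kappa)=3$ phenomenon explicit (the partials alone fail to span the cubics exactly when $3=0$, and adjoining the cubic itself repairs this — your spanning claims check out, and once the degree-$3$ piece is full the higher degrees follow by multiplying by linear forms); the paper's route is shorter but leans on Lemma~\ref{lem:D_k_det} slightly outside its stated range ($k=3$ versus the hypothesis $k\ge 4$) and hides the characteristic-$3$ question inside a lemma whose proof only ever divides by $2$. Finally, you prove the ``only if'' implications explicitly, by comparing factorization types of the leading cubic forms under the invertible linear substitutions coming from Definition~\ref{def:can_be_written}; the paper only proves the forward implications and leaves the converses to the implicit mutual exclusivity of the three cases of Lemma~\ref{lem:jet_3_2}, so this is a welcome addition rather than a redundancy.
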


\begin{proof}
    First, if $\jet_3(f) = xy(x+y)$ then there exists some $a$ and $b$ that generate $\mathfrak{m}$ and some $g \in \mathfrak{m}^4$ such that $f = b^2 a + a^3 + g$. Then by Lemma~\ref{lem:D_k_det} we must have that $f \looparrowright b^2 a + a^3$, which is $D_4$. \\

    Now, assume that $\jet_3(f) = x^2y$. Then we can find some $a$ and $b$ that generate $\mathfrak{m}$ such that $f=a^2b+g$ for some $g \in \mathfrak{m}^4$. If $g=0$ then clearly we have that $f \looparrowright D_\infty$. Otherwise, let $k \geq 4$ be the smallest integer such that $g \in \mathfrak{m}^k$. Thus, we can write 
    \begin{equation*}
        f-g=b^2a+\alpha_m  b^k + \beta  ab^{k-1}+ b^2h\left( a\right)
    \end{equation*}
    for some units $\alpha_m, \beta \in R$ and for some $h \in \mathfrak{m}^{k-2}$ that is a power series in $a$. By setting $ y_1 = b-\frac{\beta b^{k-2}}{2}$ and $x_1=a-h\left(a\right)$, we have that $f =   y^2x + \alpha_{k} x^k +g_1$ for some $\alpha_{k}$ and some $g_1 \in \mathfrak{m}^{k+1}$. Thus, by repeating this process we get a sequence $\{\alpha_{r}\}_{r>k}$ and sequence of pairs $\{(a_r, b_r)\}_{r>k}$ such that for every $r$ we have that $\mathfrak{m} = \langle a_r, b_r \rangle$ and there exists some $g_r \in \mathfrak{m}^{r+1}$ such that  $f-g_r =    b_r^2a_r + \alpha_k a_r^k$.\\
    
    If there exists some $r_0$ such that $\alpha_{r_{0}}$ is a unit then, up to increasing $k$ to $r_0$, we can assume that $\alpha_k$ is a unit. Therefore, we get that $f =  \alpha_k(  (\alpha_k^{-\frac{1}{2}}b_r)^2a_r +  a_r^k+h_r$ for some $h_r \in \mathfrak{m}^{k+1}$.  Thus, by applying Lemma~\ref{lem:D_k_det} we can conclude that $f \looparrowright y^2x + x^k$. \\

    Otherwise, if for every $r$ we have that $\alpha_r \in \mathfrak{m}$, then we can conclude that there exists some sequence $\{(x_r, y_r)\}_{r>k}$ such that $f = y_r^2x_r +\alpha_rx_r^r + g_r$ for some $g_r \in \mathfrak{m}^r$. Thus, by taking $r \to \infty$, and observing that both $\{x_r\}_{r>k}$ and $\{y_r\}_{r>k}$ are both Cauchy sequences in a complete ring $R$, we can conclude that $x_r \to x_0$ and $y_r \to y_0$ such that $f=y_0^2x_0 \looparrowright D_\infty$. 
\end{proof}

Finally, in order to finish the $ADE$ classification it is enough to classify (up to $\looparrowright$) the elements $f$ for which $\jet_3(f) = x^3$. Note that up until now, in the computations and classifications of $A_k$ and of $D_k$, we did not care about the value of $\chara(\kappa)$ (except for the fact that it is not $2$). Yet, as we see now in the classification of $E_k$, the value of $\chara(\kappa)$ matters significantly. Specifically, for $\chara(\kappa)=3$ and $\chara(\kappa)=5$ we get additional forms. This, in fact corresponds to their corresponding positive characteristic cases, as can be seen in~\cite{greuel1990simple}. The proof is inspired by the computations done in Section 1 of~\cite{kiyek1985einfache}.

\begin{proposition}\label{prop:splitting_n=2}
    Let $(R, \mathfrak{m}, \kappa)$ be a $2-$dimensional Henselian regular local ring where $\kappa$ is algebraically closed. Let $f \in R$ be of order $3$ with $\jet_3\left( f\right) = y^3$. Then either $f \in \langle  x_1,  x_2^2 \rangle^3$ for some $x_1,x_2 \in \mathfrak{m}$ or $f$ can be written as one of the following forms:
    \begin{enumerate}[label=(\arabic*)]
        \item $   x_1^3+  x_2^4$, \label{Eq1}
        \item $   x_1^3+  x_2^4 +  x_1^2 x_2^2$, \label{Eq2}
        \item $   x_1^3+  x_1 x_2^3$, \label{Eq3}
        \item $   x_1^3+  x_1 x_2^3 +  x_1^2 x_2^2$, \label{Eq4}
        \item $   x_1^3+  x_2^5$, \label{Eq5}
        \item $   x_1^3+  x_2^5+ x_1^2 x_2^2$, \label{Eq6}
        \item $   x_1^3+  x_2^5+ x_1^2 x_2^3$, \label{Eq7}
        \item $   x_1^3+  x_2^5+ x_1 x_2^4$. \label{Eq8}
    \end{enumerate}

    In addition, if $R$ is Henselian and $\kappa$ is algebraically closed, we have the following equivalences:
    \begin{itemize}
        \item If $\chara(\kappa) \neq 3$ we have that \ref{Eq2} can be written as \ref{Eq1}, \ref{Eq4} can be written as \ref{Eq3}, \ref{Eq6} can be written as \ref{Eq8}, and \ref{Eq7} can be written as \ref{Eq5}.
        \item If $p\neq 5$ then \ref{Eq8} can be written as \ref{Eq5}.
    \end{itemize}
       
\end{proposition}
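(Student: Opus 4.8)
The plan is to transport, over the local ring $R$, the explicit normal-form computation of Kiyek--Steinke~\cite{kiyek1985einfache} for curve singularities over an algebraically closed field, with two systematic adjustments: every ``coefficient'' is now an element of $R$ that is either a unit or zero, and every scalar rescaling is replaced by an appeal to Hensel's lemma (available since $\kappa$ is algebraically closed). First I would use Lemma~\ref{lem:jet_3_2}(3) to write $f = x^3 + g$ with $g \in \mathfrak{m}^4$ and $\mathfrak{m} = \langle x, y \rangle$; this is precisely the content of $\jet_3(f) = y^3$ after renaming the generators. Since $\looparrowright$ sees only the ideal $\langle f \rangle$, every monomial of $g$ divisible by $x^3$ may be merged with the leading term: writing $f = x^3(1+h) + g'$ with $h \in \mathfrak{m}$ and $g'$ having no monomial divisible by $x^3$, one either absorbs $1+h$ into $x^3$ by a Hensel cube root (when $3 \in R^{\times}$) or, when $3 \notin R^{\times}$, multiplies $f$ by $(1+h)^{-1}$ and transfers the unit onto $g'$, where it is cleared by a Hensel $4$th or $5$th root at the relevant stage below. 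So we may assume $g$ has no monomial divisible by $x^3$.

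Next I would run the decision tree, working degree by degree in the $\mathfrak{m}$-adic filtration. When $3 \in R^{\times}$, the Tschirnhaus substitution $x \mapsto x - \tfrac{1}{3}(\text{coefficient of }x^2)$ kills all $x^2$-terms and brings $f$ to the form $x^3 + b(y)x + c(y)$ with $b \in \langle y^3 \rangle$ and $c \in \langle y^4 \rangle$; the branch is then determined by the first surviving monomial. A unit $y^4$-coefficient of $c$ puts us in the $E_6$ branch; failing that, a unit $y^3$-coefficient of $b$ puts us in the $E_7$ branch; failing both, a unit $y^5$-coefficient of $c$ puts us in the $E_8$ branch; and if all three vanish then, because every monomial of $\mathfrak{m}$-degree $\ge 4$ other than $xy^3$, $y^4$, $y^5$ lies in $\langle x, y^2 \rangle^3$ and $\mathfrak{m}^6 \subseteq \langle x, y^2 \rangle^3$, we get $f \in \langle x, y^2 \rangle^3$ and are in the non-simple case. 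In each $E$-branch one normalizes the surviving monomial to coefficient $1$ by a Hensel root and then reduces $f$ to the stated normal form by an iteration of Tschirnhaus-type substitutions $x \mapsto x + (\text{power series in }y)$, $y \mapsto y + (\text{power series in }x)$ patterned on the proof of Lemma~\ref{lem:D_k_det}: each step removes one more monomial of higher order (measured by the weight function attached to the branch, $w(x)=4,w(y)=3$ for $E_6$, $w(x)=3,w(y)=2$ for $E_7$, $w(x)=5,w(y)=3$ for $E_8$), the iteration stabilizes after finitely many steps since $E_6,E_7,E_8$ are finitely determined (so Henselian $R$ suffices and one need not pass to $\hat R$), and the coefficients remain units-or-zero throughout. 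When $3 \in R^{\times}$ this produces \ref{Eq1}, \ref{Eq3}, or \ref{Eq5}; when $3 \notin R^{\times}$ the cube-completing substitution is unavailable, so a surviving $x^2y^2$ (respectively $x_1^2x_2^2$ or $x_1^2x_2^3$ in the $E_8$ branch) cannot be removed, yielding \ref{Eq2}, \ref{Eq4}, \ref{Eq6}, \ref{Eq7}, \ref{Eq8}; and when $5 \notin R^{\times}$ the analogous fifth-power-completing substitution against $x_2^4$ also fails, producing \ref{Eq8} in that characteristic.

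For the listed equivalences I would exhibit the explicit substitutions. When $3 \in R^{\times}$: completing the cube by $x_1 \mapsto x_1 + \tfrac{1}{3}x_2^2$ rewrites $x_1^3 + x_1^2x_2^2$ as $x_1^3$ plus monomials of strictly higher order, which (after a Hensel root makes the coefficient of $x_2^4$, respectively $x_2^5$, a unit) are absorbed as above, giving $\ref{Eq2} \looparrowright \ref{Eq1}$ and $\ref{Eq6} \looparrowright \ref{Eq8}$; the substitution $x_1 \mapsto x_1 - \tfrac{1}{3}x_2^3$ handles $x_1^2x_2^3$ in the same way, giving $\ref{Eq4} \looparrowright \ref{Eq3}$ and $\ref{Eq7} \looparrowright \ref{Eq5}$. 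When $5 \in R^{\times}$: the Tschirnhaus substitution $x_2 \mapsto x_2 + \tfrac{1}{5}x_1$ on the quintic $x_2^5 + x_1x_2^4$ removes the $x_1x_2^4$ term up to monomials of strictly higher order, which are then absorbed, giving $\ref{Eq8} \looparrowright \ref{Eq5}$. I expect the main obstacle to be bookkeeping: one must verify at each step that the substitution clearing a higher-order monomial does not reintroduce a monomial already normalized, and that the iteration in each branch terminates; this is where the weight functions and the finite-determinacy bounds for $E_6,E_7,E_8$ (analogues of the order estimates in Lemma~\ref{lem:D_k_det}) are used. The characteristic $3$ and $5$ phenomena are precisely the failures of the cube- and fifth-power-completing Tschirnhaus substitutions, each of which requires inverting the integer it completes.
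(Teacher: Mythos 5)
Your overall skeleton (reduce to cube-plus-higher-terms via Lemma~\ref{lem:jet_3_2}, branch on which of the monomials $x_2^4$, $x_1x_2^3$, $x_2^5$ carries a unit coefficient, put the residual case into $\langle x_1,x_2^2\rangle^3$, and prove the listed equivalences by completing the cube or the quintic) is the same as the paper's, and your explicit substitutions for the equivalences in characteristic $\neq 3$ and $\neq 5$ match the paper's. The genuine gap is in the classification itself when $\chara(\kappa)=3$ (and the analogous point for $5$), i.e.\ exactly the case that distinguishes this statement from the classical one. There you cannot depress the cubic, and your argument reduces to the assertion that ``a surviving $x_1^2x_2^2$ (resp.\ $x_1^2x_2^3$, $x_1x_2^4$) cannot be removed, yielding \ref{Eq2}, \ref{Eq4}, \ref{Eq6}, \ref{Eq7}, \ref{Eq8}.'' But that is the statement to be proved: starting from $f=\theta y^3+a(x)y^2x^2+b(x)yx^3+c(x)x^4$ one must show that \emph{every other} term can be cleared without ever dividing by $3$, that the coefficients of the surviving monomials can be normalized to $1$ using only roots of units of degree prime to $\chara(\kappa)$, and that intermediate forms which genuinely occur in the $E_8$ branch --- e.g.\ $y^3+y^2x^4+x^5$, or forms carrying both $y^2x^2$ and $yx^4$ --- are reduced to one of the eight listed forms; in characteristic $3$ this last step forces you to use the quintic-completing ($\chara\neq 5$) moves inside the classification, an interplay your decision tree does not register. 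The paper supplies precisely this content through a characteristic-uniform case analysis on $\ord(a),\ord(b),\ord(c)$ (the $E_6$, $E_7$, $E_8$ lemmata), whose substitutions invert only $2$ and units.

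A second, related problem is your termination argument. The weighted-determinacy heuristic you invoke fails in the very characteristics at issue: with your $E_6$ weights $w(x_1)=4$, $w(x_2)=3$ the monomial $x_1^2x_2^2$ has weight $14>12$, yet in characteristic $3$ it cannot be removed, so ``each step kills one more monomial of higher weight and finite determinacy stops the process'' is not available as stated. Moreover the finite-determinacy statement for $E_6,E_7,E_8$ over a general (merely Henselian) regular local ring that you cite as an analogue of Lemma~\ref{lem:D_k_det} is not established anywhere --- and the paper's Lemma~\ref{lem:D_k_det} itself is proved by a Cauchy-sequence argument that needs $R$ complete. The paper avoids this by organizing each branch so that only finitely many explicit substitutions and unit rescalings are needed; if you want to keep your iterative scheme, you must either prove the determinacy bound you are using (uniformly in $\chara(\kappa)=3,5$) or replace the limit argument by such a finite procedure.
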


Due to the length of the proof of Proposition~\ref{prop:splitting_n=2}, we split it into a collection of lemmata. 

\begin{lemma}\label{lem:jet_3-form}
    Let $f \in R$ be of order $3$ with $\jet_3\left( f\right) = x^3$. Then 
    \begin{equation*}
        f =  \theta  y^3 + a\left( x\right) y^2  x^2 + b\left( x\right) y x^3 + c\left( x\right) x^4
    \end{equation*}
    for some $a(x),b(x),c(x)$ power series in $x$ and some unit $\theta$, where $\mathfrak{m}=\langle x, y \rangle$. 
\end{lemma}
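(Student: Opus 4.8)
The plan is to strip off the cubic part using Lemma~\ref{lem:jet_3_2} and then organize whatever is left as a power series in $y$; the required shape will fall out of the single fact that this remainder lies in $\mathfrak{m}^4$. Concretely, since $\jet_3(f) = x^3$, Lemma~\ref{lem:jet_3_2} furnishes generators $x,y$ of $\mathfrak{m}$ and some $g \in \mathfrak{m}^4$ with $f = y^3 + g$ (we name the two generators so that the distinguished one is $y$, to match the desired conclusion). Passing to the completion if necessary — harmless for the present statement, and in the applications $R$ is already complete — the Cohen structure theorem lets us present $R$ as a formal power series ring in $y$ over a complete discrete valuation ring $A$ whose maximal ideal is $\langle x\rangle$, after possibly replacing the second parameter $x$. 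In this presentation $f$ expands as $f = \sum_{j\ge 0} p_j y^j$ with each $p_j$ a power series in $x$.

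Next one reads off what $g \in \mathfrak{m}^4$ says. In the above presentation $\mathfrak{m}^4 = \langle x,y\rangle^4$ consists exactly of the series $\sum_j q_j y^j$ with $q_0 \in \langle x\rangle^4$, $q_1 \in \langle x\rangle^3$, $q_2 \in \langle x\rangle^2$, $q_3 \in \langle x\rangle$, and $q_j$ arbitrary for $j \ge 4$. Applying this to $g = f - y^3 = \sum_{j\ne 3} p_j y^j + (p_3 - 1) y^3$ gives
\begin{equation*}
p_0 \in \langle x\rangle^4, \qquad p_1 \in \langle x\rangle^3, \qquad p_2 \in \langle x\rangle^2, \qquad p_3 \in 1 + \langle x\rangle,
\end{equation*}
with no constraint on $p_j$ for $j \ge 4$. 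In particular $p_3$ is a unit, and hence so is
\begin{equation*}
\theta := p_3 + p_4\,y + p_5\,y^2 + \cdots = \sum_{j\ge 3} p_j\, y^{j-3},
\end{equation*}
since it is congruent to $p_3 \equiv 1$ modulo $\mathfrak{m}$.

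It then remains only to regroup: writing $a(x) := p_2/x^2$, $b(x) := p_1/x^3$, $c(x) := p_0/x^4$ — each a power series in $x$ by the divisibilities above — one obtains
\begin{equation*}
f = \Big(\sum_{j\ge 3} p_j\, y^{j-3}\Big) y^3 + p_2\, y^2 + p_1\, y + p_0 = \theta\, y^3 + a(x)\, y^2 x^2 + b(x)\, y x^3 + c(x)\, x^4,
\end{equation*}
which is the claimed form. The only genuinely delicate point is the power-series bookkeeping in the second step when $R$ has mixed characteristic — one must choose the regular system of parameters for which $R$ becomes a power series ring over its coefficient ring — together with the small but essential observation that $\theta$ is allowed to involve $y$: this is precisely what lets all the tail terms $p_j y^j$ ($j \ge 4$) be absorbed into the leading coefficient, with no need to extract a cube root of a unit (which would be impossible in general when $\chara(\kappa) = 3$).
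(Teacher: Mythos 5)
Your final regrouping — reading off the divisibility conditions on the $y$-adic coefficients and absorbing the whole tail $\sum_{j\ge 4}p_jy^j$ into the unit $\theta$, with no cube root of a unit needed — is exactly the paper's argument, and your remark that $\theta$ must be allowed to involve $y$ is the right key observation. The gap is the structural step feeding it: it is \emph{not} true in general that $R$ (or $\hat{R}$) can be presented as a power series ring $A[[y]]$ over a complete DVR $A$ with maximal ideal $\langle x\rangle$, keeping the distinguished generator $y$ as the series variable, and ``replacing the second parameter $x$'' cannot repair this, because the obstruction sits in $y$. Concretely, take $R=\text{Witt}(k)[[t]]$ with $k$ algebraically closed of characteristic $p\geq 5$ and $f=p^3+t^4$. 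Then $\ord(f)=3$ and $\jet_3(f)$ is the cube of the parameter $p$; since the cubic initial form of $f$ in $\text{gr}_{\mathfrak{m}}(R)\cong k[\bar{p},\bar{t}]$ is $\bar{p}^3$, any generator $y$ admissible for your expansion must satisfy $\bar{y}=c\bar{p}$ in $\frac{\mathfrak{m}}{\mathfrak{m}^2}$. But if $R\cong A[[y]]$ with $A$ a complete DVR with uniformizer $x$, then either $p=0$ in $A$ (impossible, as $R$ has characteristic $0$), or $p$ is a unit times $x^e$ in $A$ with $e=1$ (impossible, since then $p\notin \mathfrak{m}^2+\langle y\rangle$, contradicting $\bar{y}=c\bar{p}$), or $e\geq 2$ (impossible, since $p\notin\mathfrak{m}^2$ in the unramified ring $\text{Witt}(k)[[t]]$). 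So the presentation $f=\sum_j p_j y^j$ with genuinely $y$-free coefficients in a coefficient ring does not exist in precisely the mixed-characteristic situations this paper is built to handle; note the lemma itself is still true there (e.g.\ $\theta=1$, $a=b=0$, $c=1$ for $f=p^3+t^4$).

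The repair is to drop the coefficient ring altogether, which is what the paper does: in the complete case one writes $g=f-y^3\in\mathfrak{m}^4$ as $\sum_{i,j}u_{i,j}x^jy^i$ with each $u_{i,j}$ a unit of $R$ or zero and $u_{i,j}=0$ whenever $i+j\leq 3$, where ``power series in $x$'' is taken in the weak sense of Definition~\ref{def:power_series} (the unit coefficients are allowed to involve $y$). Grouping by powers of $y$ then gives $f=\theta y^3+a_2(x)y^2+a_1(x)y+a_0(x)$ with $a_2\in\langle x^2\rangle$, $a_1\in\langle x^3\rangle$, $a_0\in\langle x^4\rangle$, which is your displayed identity; the rest of your bookkeeping goes through verbatim. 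A secondary caveat: ``passing to the completion is harmless'' is not automatic as stated, since the conclusion asserts an identity in $R$ with $\theta$ a unit of $R$ and $x,y$ generating $\mathfrak{m}$; the paper sidesteps this only because its expansion technique already presumes the complete setting in which the lemma is applied.
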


\begin{proof}
    We can write $f =   y^3 +g$ for some $g \in \mathfrak{m}^4$ with $\mathfrak{m}=\langle x,y \rangle$. Therefore we can write $g\left( x\right)=\sum_{i=0}^\infty a_i\left( x\right)  y^i$ for some $a_i\left( x\right)$ that are all power series in $x$. That is, since we can write $g\left( x\right)= \sum_{i=0}^\infty (\sum_{j=0}^\infty u_{i,j} x^j) y^i$ where $u_{i,j}$ are either units or zero, then we can set $a_i(x)=\sum_{j=0}^\infty u_{i,j} x^j$ which are power series in $x$ (as in Definition~\ref{def:power_series}).  Therefore, since we can write $y^3 +\sum_{i=4}^\infty a_i(x)y^i = y^3 ( 1+ \sum_{i=1}^\infty a_{i+3}(x) y^{i})$,  then for the unit $\theta =( 1+ \sum_{i=1}^\infty a_{i+3}(x) y^{i})$ we can write

 \begin{equation*}
      f =   \theta  y^3 + a_0\left( x\right) + a_1\left( x\right) y +a_2\left( x\right)  y^2.
 \end{equation*}

\noindent Yet, since $g \in \mathfrak{m}^4$ we must have that $a_0\left( x\right) \in \mathfrak{m}^4$, $a_1\left( x\right) \in \mathfrak{m}^3$ and $a_2\left( x\right) \in \mathfrak{m}^2$. Thus we can write $a_0\left( x\right)= x^4 c\left( x\right)$, $a_1\left( x\right)= x^3b\left( x\right)$, and $a_2\left( x\right)= x^2a\left( x\right)$  for some $a,b,c$ that are power series in $x$ and the result follows. 
\end{proof}

\begin{lemma}[$E_6$ case]
    In the settings of Lemma~\ref{lem:jet_3-form}, if $\ord\left(c\right)=0$ then $f$ can be written as  either $y^3+x^4$ or $y^3 +  y^2  x^2 +  x^4$.
\end{lemma}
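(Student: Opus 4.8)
The plan is to normalize the two dominant monomials $y^3$ and $x^4$ and then strip off everything else by an iteration organized by the weighted degree in which $\operatorname{wt}(x)=3$ and $\operatorname{wt}(y)=4$; the one place the argument feels the characteristic is the lowest weighted degree that cannot be reached, which in $\chara(\kappa)=3$ is what forces the second normal form. First I would normalize: since $\langle f\rangle=\langle\theta^{-1}f\rangle$ I may replace $f$ by $\theta^{-1}f$ and assume $\theta=1$, and since $\ord(c)=0$ the power series $c(x)$ has unit value, hence $c$ is a unit of $R$; as $R$ is Henselian, $\kappa$ is algebraically closed and $\chara(\kappa)\neq 2$, there is a fourth root $c^{1/4}\in R$, and replacing $x$ by $c^{-1/4}x$ keeps $\mathfrak{m}=\langle x,y\rangle$ while making the coefficient of $x^4$ equal to $1$. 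After renaming the (still power-series-in-$x$) coefficients I am reduced to
\[
f = y^3 + a(x)\,x^2y^2 + b(x)\,x^3y + x^4 ,
\]
in which $y^3$ and $x^4$ both have weighted degree $12$ and every remaining monomial has weighted degree $\ge 13$.

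The reduction step itself rests on the combinatorial fact that the only monomials of $\kappa[x,y]$ \emph{outside} the ideal $(x^3,y^2)$ are $1,x,x^2,y,xy,x^2y$, all of weighted degree $\le 10$; hence every weighted-homogeneous polynomial of weighted degree $\ge 13$ lies in $(x^3,y^2)$. If $P$ denotes the weighted-lowest term still present, of weighted degree $d\ge 13$, I write its degree-$d$ part as $x^3h+y^2g$ with $h,g$ weighted-homogeneous of degrees $d-9$ and $d-8$; using $\chara(\kappa)\neq 2,3$, the coordinate changes $x\mapsto x-\tfrac14 h$ and $y\mapsto y-\tfrac13 g$ (legitimate, since $h,g\in\mathfrak{m}$, and they alter $x^4$, $y^3$ only at weighted degree exactly $d$) cancel $P$ and introduce only terms of weighted degree $>d$. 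Iterating pushes all extra terms into arbitrarily high powers of $\mathfrak{m}$; since $y^3+x^4$ is finitely determined this yields $f\looparrowright y^3+x^4$. This is the same mechanism as in Lemma~\ref{lem:D_k_det}: the successive coordinate systems form a Cauchy sequence, and finite determinacy lets one conclude over the Henselian ring $R$.

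In $\chara(\kappa)=3$ the move $y\mapsto y-\tfrac13 g$ is unavailable, so the reachable degree-$d$ polynomials are only those in $(x^3,y^3)$ (the $(y^3)$-part now coming from the unit rescaling $f\mapsto (1+e)f$); among monomials of weighted degree $\ge 13$ the unique one lying outside $(x^3,y^3)$ is $x^2y^2$ (weighted degree $14$). One checks that no admissible move creates an $x^2y^2$-term: the Frobenius identity $(y+g)^3=y^3+g^3$ leaves the $x^2y^2$-coefficient untouched, and the substitution $x\mapsto x-b_0y$ that kills the weighted-degree-$13$ term $b_0x^3y$ produces no $x^2y^2$ because the binomial coefficient $\binom42=6$ vanishes mod $3$. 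Hence the iteration removes every extra term except a possible $\alpha(x)\,x^2y^2$: if $\alpha(0)=0$ this term has weighted degree $\ge 17$ and is absorbed like the rest, giving $f\looparrowright y^3+x^4$; if $\alpha(0)\neq0$ a combined rescaling of $x$, $y$ and $f$ by suitable roots of units normalizes its coefficient to $1$ while keeping $y^3$ and $x^4$ with coefficient $1$, giving $f\looparrowright y^3+y^2x^2+x^4$. In either characteristic $f$ is written as $y^3+x^4$ or $y^3+y^2x^2+x^4$, as claimed. The genuinely delicate points — and where I expect the main work to be — are the bookkeeping in the weighted iteration (verifying that each cancellation perturbs only strictly higher weighted degrees, and that the process terminates via finite determinacy of $E_6$, resp.\ $E_6^1$) and the $\chara(\kappa)=3$ analysis, where one must pin down $x^2y^2$ as the sole possible surviving obstruction and its leading coefficient as a true invariant, so that this dichotomy is exhaustive.
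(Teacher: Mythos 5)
Your overall strategy (normalize $\theta$ and $c$, then remove the tail by an iteration organized by the weights $\operatorname{wt}(x)=3$, $\operatorname{wt}(y)=4$) is genuinely different from the paper's proof, which is a finite case analysis on $\ord(a)$ and on whether $b$ is a unit, producing $y^3+x^4$, $y^3+y^2x^2+x^4$, or the intermediate form $y^3+yx^3+x^4$, followed by one explicit substitution showing $y^3+yx^3+x^4 \looparrowright y^3+y^2x^2+x^4$ --- all in finitely many steps. The problem is that your finishing move has a genuine gap: after the iteration you only know $f$ agrees with the target form modulo arbitrarily high powers of $\mathfrak{m}$, and you close the argument by invoking ``finite determinacy'' of $y^3+x^4$ (resp.\ of the $E_6^1$ form) with respect to $\looparrowright$. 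No such determinacy theorem is available in this setting --- the paper proves determinacy-type statements only case by case (e.g.\ Lemma~\ref{lem:D_k_det}), and proving it for $E_6$ is essentially the content of the lemma, so the appeal is circular. The alternative you mention, passing to the limit of the coordinate changes, requires $R$ to be complete so that the Cauchy sequences of generators converge (this is exactly how Lemma~\ref{lem:D_k_det} works), whereas the setting inherited from Proposition~\ref{prop:splitting_n=2} only assumes $R$ Henselian; the paper's proof avoids any limiting process precisely for this reason, using only finitely many substitutions and Hensel root extractions of units.

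A second, repairable but real, defect is that your weighted-degree bookkeeping silently assumes an equicharacteristic power-series structure. Over the rings considered here there is no coefficient field, so ``the weighted-lowest term of weighted degree $d$'' is not canonically defined (one must work with the filtration by ideals generated by monomials of weight $\geq d$, with coefficients that are units or in $\mathfrak{m}$, as the paper does), and in mixed characteristic $(0,3)$ the identities you use in the $\chara(\kappa)=3$ analysis fail: $3$ is a nonzero element of $\mathfrak{m}$, so $(y+g)^3=y^3+g^3$ is false and $\binom{4}{2}=6$ does not vanish in $R$; those contributions are merely of higher weighted order and must be tracked as such. Note also that the characteristic dichotomy you build in is not needed for the statement: the lemma only asserts that $f$ can be written as one of the two forms (in any characteristic $\neq 2$), and the merging of $y^3+y^2x^2+x^4$ into $y^3+x^4$ when $\chara(\kappa)\neq 3$ is handled by a separate lemma later in the paper.
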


\begin{proof}
 We have the following cases:

    \begin{enumerate}
        \item \underline{If $\ord\left(a\right) \geq 2$} then there exists some  $a_1(x)$ that is a power series in $x$ such that $a(x)=x^2a_1(x)$. Thus
        \begin{equation*}
           f =  \theta  y^3  + b\left( x\right) y x^3 + (c(x)+ a_1( x) y^2)x^4. 
        \end{equation*}
        Since $c(x)$ is a unit we have that $c(x)+ a_1( x)y^2$ is a unit as well. Therefore:
        \begin{enumerate}
            \item \underline{If $b(x)$ is not a unit} then we can write $b(x)=b_1(x)x$ for some $b_1(x)$ that is a power series in $x$. Therefore we can conclude that 
            \begin{equation*}
           f =  \theta  y^3  + (c(x)+ b_1(x)y+ a_1( x) y^2)x^4,  
        \end{equation*}
        \noindent and since $u=c(x)+ b_1(x)y+ a_1( x) y^2$ is still a unit, we have that $f=\theta y^3 + u x^4 \looparrowright y^3 + \theta^{-1}ux^4$. Thus, by setting\footnote{The numbers in the exponents come from solving a set of linear equations whose goal is to rewrite all of the unit coefficients. We use this technique extensively throughout the next couple of Lemmata.} $x_1=\theta^{-\frac{1}{4}}u^{\frac{1}{4}}x$ we get that $f \looparrowright y^3 +x_1^4$.
        \item \underline{If $b(x)$ is a unit} then we have that $f = \theta y^3 + byx^3 + ux^4$ for $u=c(x)+ a_1( x)y^2$. Thus, by choosing $x_1$ and $y_1$ such that $x= \theta b^{-3}u^2 x_1$ and $y=  \theta b^{-4} u^3 y_1$ we get that 
        \begin{equation*}
            f=\theta^4 b^{-12} u^9 (y_1^3 + y_1x_1^3 + x_1^4) \looparrowright y_1^3 + y_1x_1^3 + x_1^4.
        \end{equation*}
        
        \end{enumerate}
        \item \underline{If $\ord\left(a\right) = 1$} then there exists some unit $a_1(x)$ that is a power series in $x$ such that $a(x)=xa_1(x)$. Thus we get that 
        \begin{equation*}
         f =  \theta  y^3 + (a_1( x) y + b( x)) y x^3 + c\left( x\right) x^4
         \end{equation*}
        Therefore:
        \begin{enumerate}
            \item \underline{If $b(x)$ is a unit} we get that $u=a_1( x) y + b( x)$ and thus $f=\theta  y^3 + u y x^3 + c\left( x\right) x^4$ which gives us that $f \looparrowright y_1^3 + y_1 x_1^3 + x_1^4$ for some $x_1$ and $y_1$ by a similar computation as in the previous case. 
            \item \underline{If $b(x)$ is not a unit} then there exists some $b_1(x)$ such that $b(x)=b_1(x)x$, and thus 
            \begin{equation*}
         f =  \theta  y^3 + a_1( x) y^2x^3 + (c\left( x\right)+b_1( x) y) x^4,
         \end{equation*}
         and since $c(x)$ is a unit we have that $u=(c\left( x\right)+b_1( x) y)$ is a unit as well. Therefore $f =  \theta  y^3 + a_1( x) y^2x^3 + u x^4$. By choosing $x_1$ such that $x=x_1-\frac{ay^2}{4u}$ we get that  
         \begin{equation*}
             f =  \theta  y^3 + a_1 y^2\left( x_1-\frac{a_1y^2}{4u} \right)^3 + u \left( x_1-\frac{a_1y^2}{4u} \right)^4, 
         \end{equation*}
         and thus we can find some new units $u_1$ and $u_2$ such that $f = u_1y^3 + u_2x_1^4$, which can be $f \looparrowright y_1^3 + x_2^4$ for some $y_1$ and $x_2$ by a similar computation as before. 
        \end{enumerate}
        
        \item \underline{If $\ord\left(a\right) = 0$} then:
        \begin{enumerate}
            \item \underline{If $b(x)$ is not a unit} then there exists some $b_1(x)$ such that $b(x)=b_1(x)x$ and thus 
            \begin{equation*}
                f =  \theta  y^3 + a\left( x\right) y^2  x^2 + (b_1( x) y + c\left( x\right) )x^4.
            \end{equation*}
            Since $c(x)$ is a unit we get that $u=b_1( x) y + c\left( x\right)$ and thus $f = \theta y^3 + a(x) x^2 y^2 +  ux^4 \looparrowright y_1^3 + x_1^2y_1^2 + x^4$ by a similar computation to as before. 
            \item \underline{If $b(x)$ is a unit} then as in the previous case, by choosing $x_1$ such that $x=x_1-\frac{b(x)y}{4c(x)}$, we get that \begin{equation*}
                f =  \theta  y^3 + a\left( x\right) y^2  \left(x_1-\frac{b(x)y}{4c(x)}\right)^2 + b\left( x\right) y \left(x_1-\frac{b(x)y}{4c(x)}\right)^3 + c\left( x\right) \left(x_1-\frac{b(x)y}{4c(x)}\right)^4.
            \end{equation*} 
            Thus, there are units $u_1, u_2, u_3$ such that $f=u_1y^3 + u_2 x_1^2y^2 + u_3x^4$, which can be written as $y_1^3 + x_2^2y_1^2 + x_2^4$ for some $x_2$ and $y_1$, as before. 
        \end{enumerate}
        
    \end{enumerate}

    Finally, we show that $y^3 +  y x^3 +  x^4 \looparrowright y^3 +  y^2 x^2 +  x^4$. By choosing $x_1$ such that $x=x_1-\frac{y}{4}$ then we get that $y^3 +  y x^3 +  x^4=y^3 +  y (x_1-\frac{y}{4})^3 +  (x_1-\frac{y}{4})^4$. By expanding the expression, we can find units $u_1, u_2,u_3$ such that $y^3 +  y x^3 +  x^4= u_1y^3 + u_2x_1^2 y^2 + u_3x_1^4$, which in turn can be written as $y_1 x_2^2y_1^2 + x_2^4$ for some $x_2$ and $y_1$, by a similar computation as before. 
\end{proof}

\begin{lemma}[$E_7$ case]
     In the settings of Lemma~\ref{lem:jet_3-form}, if $\ord\left(b\right)=0$ and $\ord\left(c\right)\geq 1$ then $f$ can be written as  either $ y^3 + y x^3$ or $y^3 +  y^2  x^2 +  y x^3$.
\end{lemma}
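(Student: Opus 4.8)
The plan is to imitate the analysis of the $E_6$ case, now pivoting on the term $b(x)\,yx^3$ — whose coefficient is a unit since $\ord(b)=0$ — and splitting into cases according to $\ord(a)$. The first task is to remove the pure--$x$ tail $c(x)x^4$. Writing $c(x)=xc_1(x)$ (possible since $c(x)$ is a power series in $x$ lying in $\mathfrak m$), the element $\tfrac{c(x)x}{b(x)}=\tfrac{c_1(x)x^2}{b(x)}$ lies in $\mathfrak m^2$, so $y\mapsto y+\tfrac{c(x)x}{b(x)}$ is an admissible change of generators of $\mathfrak m$. After this substitution $f$ still has order $3$ with $\jet_3(f)=y^3$, so by Lemma~\ref{lem:jet_3-form} it may be rewritten in the standard shape, and one checks that the coefficient of $yx^3$ remains a unit and that the order of the pure--$x$ tail strictly increases (the coefficient of $y^2x^2$ changes only by a multiple of $3$, hence is left intact when $\chara(\kappa)=3$). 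Iterating and passing to the limit — the successive generators form Cauchy sequences and $R$ is complete, exactly as in Lemma~\ref{lem:D_k_det} — I may therefore assume
\[
f=\theta y^3+a(x)\,y^2x^2+b(x)\,yx^3,\qquad \theta,\ b\ \text{units},
\]
with $a(x)$ whatever power series in $x$ the process produced.

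Next I would split on $\ord(a)$. If $\ord(a)=0$, so that $a$ is a unit, then $f=y\bigl(\theta y^2+a(x)x^2y+b(x)x^3\bigr)$; taking the new generators $a_1:=\tfrac{a^3}{\theta b^2}\,y$ and $a_2:=\tfrac{a^2}{\theta b}\,x$ (both units times $y$, $x$, hence generating $\mathfrak m$), a direct computation gives $a_1^3+a_1^2a_2^2+a_1a_2^3=\tfrac{a^9}{\theta^4 b^6}\cdot f$, so $\langle f\rangle=\langle a_1^3+a_1^2a_2^2+a_1a_2^3\rangle$ and $f\looparrowright y^3+y^2x^2+yx^3$; note this case needs no root extraction. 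If instead $\ord(a)\ge1$, write $a(x)=x\alpha(x)$, so that $f=y\bigl(\theta y^2+\alpha(x)x^3y+b(x)x^3\bigr)$; here I would take $a_1:=y$ and solve the equation $\theta y^2+\alpha(x)x^3y+b(x)x^3=w\,(y^2+a_2^3)$ for a unit $w$ and a generator $a_2$ coefficient by coefficient. The only arithmetic input is a cube root in $\kappa$ to match the lowest--order pure--$x$ coefficient (available since $\kappa$ is algebraically closed, in any characteristic), after which the remaining, $y$-involving equations can be solved successively using the freedom left in $a_2$ and $w$ together with the completeness of $R$. This yields $\langle f\rangle=\langle a_1^3+a_1a_2^3\rangle$, i.e.\ $f\looparrowright y^3+yx^3$.

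The main obstacle is the bookkeeping in the tail--clearing step: one must verify that the substitution preserves $\ord(b)=0$, that re-invoking Lemma~\ref{lem:jet_3-form} after each substitution is harmless (the general unit $\theta$, which may depend on $y$, has to be carried along), and that the iteration converges to an exact identity; one must also track how $\ord(a)$ may change, the key point being that in characteristic $3$ the coefficient of $y^2x^2$ is untouched, which is precisely why the form $y^3+y^2x^2+yx^3$ genuinely occurs (in characteristic $\neq 3$ it is anyway equivalent to $y^3+yx^3$, as recorded in the Proposition). A secondary difficulty is organizing the coefficient--by--coefficient solution in the case $\ord(a)\ge1$ so that it terminates at the stated normal form; this is a determinacy--type argument in the spirit of Lemma~\ref{lem:D_k_det} and of the $E_6$ case.
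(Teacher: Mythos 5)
Your overall route works and genuinely differs from the paper's. The paper disposes of the pure-$x$ tail in a single stroke: since $b(x)$ is a unit, $y_1:=b(x)y+c_1(x)x^3$ is again a generator of $\mathfrak{m}$ together with $x$, and substituting $y=\frac{y_1-c_1(x)x^3}{b(x)}$ immediately gives $f\looparrowright y_1^3+\alpha\,y_1^2x^2+\beta\,y_1x^3$ with $\beta$ a unit; no iteration or completeness is needed. Your iterative tail-clearing (new tail of strictly larger order, Cauchy generators, pass to the limit) can be made to work in the spirit of Lemma~\ref{lem:D_k_det}, but it is heavier than necessary. Your case $\ord(a)=0$ is correct, and in fact cleaner than the paper's treatment: the exact identity $a_1^3+a_1^2a_2^2+a_1a_2^3=\frac{a^9}{\theta^4b^6}f$ for $a_1=\frac{a^3}{\theta b^2}y$, $a_2=\frac{a^2}{\theta b}x$ checks out and avoids the square root $\alpha^{1/2}$ that the paper extracts.

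The genuine gap is the case $\ord(a)\geq 1$. There you only sketch solving $\theta y^2+\alpha(x)x^3y+b(x)x^3=w\left(y^2+a_2^3\right)$ ``coefficient by coefficient,'' which is precisely the nontrivial content, and the arithmetic is more delicate than you indicate: beyond the single cube root in $\kappa$, the successive corrections require extracting cube roots of units of $R$ congruent to prescribed units, and when $\chara(\kappa)=3$ Hensel's lemma does not supply these (for instance $1+x$ is not a cube in $\kappa[[x]]$ in characteristic $3$), so ``can be solved successively'' is not justified as stated. The fix is short and is what the paper does: writing $a(x)=x\,\alpha(x)$, absorb $a(x)y^2x^2=\left(\alpha(x)y\right)yx^3$ into the $yx^3$-term, so $f=\theta y^3+u\,yx^3$ with $u=b(x)+\alpha(x)y$ a unit; then $a_1:=\theta^2u\,y$ and $a_2:=\theta u\,x$ generate $\mathfrak{m}$ and satisfy $a_1^3+a_1a_2^3=\theta^5u^3\,f$, whence $\langle f\rangle=\langle a_1^3+a_1a_2^3\rangle$ and $f\looparrowright y^3+yx^3$ with no root extraction in any characteristic. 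With that replacement your argument is complete.
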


\begin{proof}
    We can write $c\left( x\right)= xc_1\left( x\right)$ for some $c_1$ that is a power series in $x$. Therefore,  we can write 
    \begin{equation*}
         f =  \theta  y^3+a\left( x\right) y^2 x^2 +  x^3\left( yb\left( x\right) + c_1\left( x\right) x^3\right).
    \end{equation*}
    By choosing $y_1$ such that  $ y_1  = yb\left( x\right)+c_1\left( x\right) x^3$ and noticing that $y=\frac{y_1-c_1(x)x^3}{b(x)}$,  we get that
    \begin{equation*}
        f = \frac{\theta  y_1^3}{b\left( x\right)^3}+\frac{a\left( x\right)}{b\left( x\right)} y_1^2 x^2 + u  x^3 y_1 
    \end{equation*}
    for some unit $u$, which in turn gives us $f \looparrowright y_1^3 + \alpha y_1^2  x^2 + \beta  y_1 x^3$ for some $\alpha$ and some unit $\beta$. Now, we have two cases:
    \begin{enumerate}
        \item \underline{If $\alpha$ is a unit} then by setting $x=\alpha^{0.5}x_1$ we can write $f$ as $y_1^3 +  y_1^2  x_1^2 + \beta_1  y_1 x_1^3$ for some unit $\beta_1$. Therefore, by setting $ y_1 = \beta_1^2 y_2$ and $ x_1 = \beta_1 x_2$ we get that $f \looparrowright y_2^3 +  y_2^2  x_2^2 +  y_2 x_2^3$.
        \item \underline{If $\alpha$ is not a unit} then we can find some $\alpha_1$ and $\alpha_2$ such that $\alpha = \alpha_1 x + \alpha_2y_1$. Thus we can conclude that 
        \begin{equation*}
            y_1^3 + (\alpha_1 x + \alpha_2y_1) y_1^2  x^2 + \beta  y_1 x^3= y_1^3 (1+\alpha_2x^2) + y_1x^3(\beta + y_1 \alpha_1).
        \end{equation*}
        Yet we have that $u_1 = (1+\alpha_2x^2)$ and $u_2 = \beta + y_1 \alpha_1$ are both units, and so $y_1^3 + \alpha y_1^2  x^2 + \beta  y_1 x^3= u_1 y_1^3 + u_2 y_1 x^3$. Thus, by setting $x_1=u_1^{-1}u_2^{-1}x$ and $y_2 = u_1^{-1}u_2^{-1}y_1$ we get that $f \looparrowright y_2^3 + y_2x_1^3$. 
    \end{enumerate}
\end{proof}

\begin{lemma}[$E_8$ case]
     In the settings of Lemma~\ref{lem:jet_3-form}:
     \begin{enumerate}
         \item If $\ord\left(b\right)\geq 2$ and $\ord\left(c\right)=1$ then $f$ is can be written as either $ y^3 +  y^2  x^2 +  x^5$, $ y^3 +  y^2  x^3 +  x^5$, $y^3 +  y^2  x^4 + x^5$, or $ y^3 +  x^5$.
         \item If $\ord\left(b\right) = 1$ and $\ord\left(c\right)=1$ then $f$ can be written as either $   y^3 +  y^2  x^2 +  y x^4 +  x^5$, $ y^3 +  y^2  x^3 +  y x^4 +  x^5$, or $ y^3 +  x^5 +   y x^4$.
     \end{enumerate}
\end{lemma}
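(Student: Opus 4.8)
Starting point: feed the normal form of Lemma~\ref{lem:jet_3-form} into the ``complete a power, then rescale'' machine already used in the $E_6$ and $E_7$ cases, now organised by the order of $a$ (with $\ord(b),\ord(c)$ pinned down by hypothesis). Write $f=\theta y^{3}+a(x)y^{2}x^{2}+b(x)yx^{3}+c(x)x^{4}$ with $\theta$ a unit and $a,b,c$ power series in $x$. Since $\ord(c)=1$, put $c(x)=c_{1}(x)x$ with $c_{1}$ a unit, so the last term is $c_{1}(x)x^{5}$. In case~(1), $\ord(b)\ge 2$ lets me write $b(x)yx^{3}=\tilde b(x)yx^{5}$ and merge it with $c_{1}(x)x^{5}$, giving $f=\theta y^{3}+a(x)y^{2}x^{2}+Ux^{5}$ with $U=c_{1}+\tilde b y$ a unit; in case~(2), $\ord(b)=1$ gives $b(x)yx^{3}=b_{1}(x)yx^{4}$ with $b_{1}$ a unit, so $f=\theta y^{3}+a(x)y^{2}x^{2}+b_{1}(x)yx^{4}+c_{1}(x)x^{5}$.

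Next I would split on $\ord(a)$. The key elementary point is that, because $\ord(f)=3$ and $\jet_{3}(f)=y^{3}$, and because $\ord(b)\ge1$, $\ord(c)=1$ rule out any $yx^{3}$- or $x^{4}$-monomial, \emph{every} monomial of $f$ is divisible by one of $y^{3}$, $y^{2}x^{2}$, $yx^{4}$, $x^{5}$; collecting terms shows $f$ is \emph{exactly} a unit times $y^{3}$ plus, according as $\ord(a)=0,1,2$, a unit-leading power series in $x$ times $y^{2}x^{2}$, $y^{2}x^{3}$ or $y^{2}x^{4}$, plus (in case~(2)) a unit times $yx^{4}$, plus a unit times $x^{5}$, with nothing else. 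When $\ord(a)$ is large ($\ge3$ in case~(1), $\ge2$ in case~(2)) the term $a(x)y^{2}x^{2}$ has order $>5$ and is divisible by $x^{5}$ (resp.\ by $yx^{4}$), hence absorbed, leaving $y^{3}+x^{5}$ (resp.\ $y^{3}+yx^{4}+x^{5}$). This already produces each listed candidate form.

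It remains to replace the unit coefficients by a common unit $v$ --- enough, since then $\langle f\rangle=\langle v\cdot(\text{model})\rangle=\langle\text{model}\rangle$ and $f\looparrowright$ that model. I would do this in two moves: (i) when the form has four monomials (the subcases $\ord(a)\in\{0,1\}$ of~(2)), one preliminary ``completing'' substitution $x\mapsto x+\gamma y$ or $y\mapsto y+\gamma x^{2}$, with $\gamma\in R$ chosen so that the coefficient ratios of the surviving monomials become compatible with a monomial rescaling; and (ii) a rescaling $x\mapsto\mu x$, $y\mapsto\lambda y$. Every such change of variables agrees with the identity modulo $\mathfrak{m}^{2}$, so the new pair still generates $\mathfrak{m}$; matching exponents turns ``all coefficients equal'' into a $2\times2$ linear system for $(\lambda,\mu)$ whose defining matrix has small determinant $d$ (e.g.\ $d=1,4,7$ for $(y^{3},y^{2}x^{2},x^{5})$, $(y^{3},y^{2}x^{3},x^{5})$, $(y^{3},y^{2}x^{4},x^{5})$), so the system is solved by extracting $d$-th roots of units --- available because $R$ is Henselian with $\kappa$ algebraically closed and $\chara(\kappa)$ prime to the exponents in play (the one exceptional case $\chara(\kappa)=d$ being reduced away by the cube-completion below), the ensuing implicit system being resolved by Hensel's lemma, whose Jacobian is a unit since $\chara(\kappa)\neq2$. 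Because the whole argument is a \emph{finite} sequence of explicit substitutions --- the $E$-type singularities being finitely determined --- no Cauchy limit, and hence no completeness of $R$, is ever needed, unlike in Lemma~\ref{lem:D_k_det} and the $A_\infty/D_\infty$ cases; this is why ``Henselian'' suffices.

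Finally, the stated equivalences among the forms come from the optional completions, available exactly when $\chara(\kappa)\notin\{3,5\}$: completing the cube in $y$ trades a $y^{2}x^{j}$ term for terms of orders $2j+1$ and $3j$, which for $j\ge3$ are $x^{5}$-divisible (a $y^{2}x^{2}$ term instead migrating into $yx^{4}$), and completing the fifth power in $x$ eliminates $yx^{4}$; chaining these reduces the case~(1) forms to $y^{3}+x^{5}$ and the case~(2) forms to $y^{3}+yx^{4}+x^{5}$, exactly as recorded in Proposition~\ref{prop:splitting_n=2}, whereas in $\chara(\kappa)=3$ or $5$ the chains break and the extra forms genuinely persist. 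I expect the main obstacle to be bookkeeping rather than ideas: checking in each subcase that no stray monomial is produced, and tracking how the coefficient power series transform under each substitution (the repeated ``there are new units $u_{1},u_{2},u_{3}$ with the required shape'' steps of the $E_{6}$ case) so that the final linear system really is solvable --- together with locating precisely where $\chara(\kappa)=3$ and $\chara(\kappa)=5$ obstruct the completions and thereby force the additional forms.
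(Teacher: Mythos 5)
Your skeleton is the same as the paper's: pin down the tail using $\ord(b),\ord(c)$, split on $\ord(a)$, absorb the high subcases ($\ord(a)\ge 3$ in (1), $\ord(a)\ge 2$ in (2)) into the $x^5$ resp.\ $yx^4$ coefficient, and then normalize the remaining unit coefficients by a monomial rescaling. For the three-monomial forms of part (1) this is exactly what the paper does --- its explicit substitutions are the solutions of your exponent systems with $d=1,4,7$, and your observation that the $d=7$ root extraction is only problematic when $\chara(\kappa)=7$, where cube completion is available instead, is a reasonable patch --- so that half of your proposal is essentially the paper's argument and is fine.

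The genuine gap is your step (i), the four-monomial forms of part (2). For $f=\theta y^3+ay^2x^2+byx^4+cx^5$ with unit coefficients, the rescaling $y\mapsto\lambda y$, $x\mapsto\mu x$ together with the overall unit $w$ gives four equations in three unknowns, hence exactly one compatibility condition, namely $a^2=\theta b$ (for the $\ord(a)=1$ form it is $ac=b^2$). Your preliminary substitution $y\mapsto y+\gamma x^2$ replaces $(a,b)$ by $(a+3\gamma\theta,\; b+2\gamma a+3\gamma^2\theta)$, so the condition becomes the quadratic $6\theta^2\gamma^2+4\theta a\gamma+(a^2-\theta b)=0$, whose discriminant $8\theta^2(3\theta b-a^2)$ need not be a unit or a square in $R$, so the advertised ``Hensel with unit Jacobian'' does not apply; and even when a root exists it may annihilate $a'$ and $b'$, leaving nothing to rescale. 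This is not mere bookkeeping: over $\kappa[[x,y]]$ with $\chara(\kappa)=5$ take $f=y^3+y^2x^2+2yx^4+x^5$, which satisfies $\ord(a)=0$, $\ord(b)=\ord(c)=1$; completing the cube with $y_1=y+2x^2$ gives $f=y_1^3+x^5(1+2x)$, so after absorbing the unit via a cube root $f\looparrowright y^3+x^5$ (an $E_8^0$), whereas all three forms listed in part (2) reduce by the analogous cube completion to the class of $y^3+x^5+yx^4$ (an $E_8^1$), a different contact class in characteristic $5$; since over a complete equicharacteristic ring $\looparrowright$ implies contact equivalence, this $f$ cannot be written as any of the three listed forms, so no choice of preliminary substitution and rescaling can deliver the $\ord(a)=0$ conclusion as you state it. In short, your steps (i)--(ii) fail for the four-monomial cases (the paper's own proof is silent at exactly this point, asserting those reductions without detail); a correct treatment must allow the $y^2x^2$ and $yx^4$ terms to cancel after cube completion, i.e.\ must admit further degenerations such as $y^3+x^5$ in the part (2) list.
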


\begin{proof}
    \begin{enumerate}
      \item  There exists some unit $c_1\left(x\right)$ and some $b_1\left(x\right)$ such that $ f =   \theta  y^3 + a\left( x\right) y^2  x^2 +  x^5\left( b\left( x\right) y + c_1\left( x\right)\right)$. Since $c_1\left( x\right)$ is a unit then so is $u_1=b\left( x\right) y + c_1\left( x\right)$. Now, we have the following cases:
    \begin{enumerate}
        \item \underline{If $\ord(a)=0$} then $  f =  \theta  y^3 + a\left( x\right)  y^2  x^2 + u_1 x^5$, which would give us $f \looparrowright y_1^3+ y_1^2 x_1^2 +  x_1^5$ by setting $ y = y_1\frac{u_1^2\theta^3}{a\left( x\right)^5}$ and $ x = x_1\frac{u_1 \theta^2}{a\left( x\right)^3}$. 
        \item \underline{If $\ord(a)=1$} then there exists some unit $a_1(x)$ such that $a(x)=a_1(x)x$, and so we have that $  f=  \theta  y^3 + a_1\left( x\right) y^2  x^3 + u_1 x^5$, which would give us $   f \looparrowright y_1^3 +  y_1^2  x_1^3 +  x_1^5$ under the renaming $ y = y_1\frac{a_1\left( x\right)}{\theta} \left( \frac{\theta^2 u_1}{a_1\left( x\right)^3} \right)^{\frac{3}{4}}$ and $ x = x_1\left( \frac{\theta^2 u_1}{a_1\left( x\right)^3} \right)^{\frac{1}{4}}$. 
        
        \item \underline{If $\ord(a)=2$} then there exists some unit $a_1(x)$ such that $a(x)=a_1(x)x^2$, and so $  f =  \theta  y^3 + a_1\left( x\right) y^2  x^4 + u_1 x^5$, and which in turn would give us $  f \looparrowright  y_1^3 +  y_1^2  x_1^4 + x_1^5$ by a similar computation as before. 
        
        \item \underline{If $\ord(a) \geq 3$} then there exists some $a_1(x)$ such that $a(x)=a_1(x)x^3$, and thus $u_2 = u_1+a_1\left( x\right) y$ is a unit with $  f=  \theta  y^3 + u_2 x^5$, and therefore by setting $ y = \theta^3 u_2^2  y_1$ and $ x = \theta^2 u_2  x_1$ we get that $f \looparrowright y_1^3 +  x_1^5$. 
    \end{enumerate}
    
    \item  There exists some units $c_1\left(x\right)$ and  $b_1\left(x\right)$ such that $b(x)=b_1(x)x$ and $c(x)=c_1(x)x$, and so $ f =   \theta  y^3 + a_1\left( x\right) y^2  x^2 + b_1\left( x\right) y x^4 + c_1\left( x\right) x^5$.  Thus, we have three cases to check:
    \begin{enumerate}
        \item \underline{If $\ord(a)=0$} then  $ f \looparrowright   \theta_1  y^3 + a(x)  y^2  x^2 +  b_1(x) y x^4 + c_1(x) x^5 $ which would give us $f \looparrowright  y^3 +  y^2  x^2 +  y x^4 +  x^5$. 
        \item \underline{If $\ord(a)=1$} then there exists some unit $a_1(x)$ such that $a(x)=a_1(x)x$, and so $ f \looparrowright  \theta_1  y^3 + a_1\left( x\right)  y^2  x^3 + b_1(x) y x^4 + c_1(x) x^5$ which would give us $  f \looparrowright   y^3 +  y^2  x^3 +  y x^4 +  x^5$. 
        \item \underline{If $\ord(a) \geq 2$} then there exists some $a_1(x)$ such that $a(x)=a_1(x)x^2$ and so we have that $b_1(x)+ ya_1\left( x\right)$ is a unit, and therefore $   \theta_1  y^3 + a_1( x) y^2  x^{2} + b_1(x) y x^4 + c_1(x) x^5 =   \theta_1  y^3 + c_1(x) x^5 + u_1  y x^4$, which would give us $ f \looparrowright  y^3 +  x^5 +   y x^4$.
    \end{enumerate}
    \end{enumerate}
\end{proof}

We now turn to showing the specific equivalences based on the value of $\chara(\kappa)$. Recall that in Proposition~\ref{prop:splitting_n=2}, we assume in addition that $R$ is Henselian and that $\kappa$ is algebraically closed (this assumption is important as it allows us to take roots of units in $R$ of degree different than $\chara(\kappa)$). 

\begin{lemma}[The $E_6$ case when $\chara(\kappa)\neq 3$]
    If $\chara(\kappa) \neq 3$ then  $  y^3+  x^4 +  y^2 x^2 \looparrowright    y^3+  x^4$. 
\end{lemma}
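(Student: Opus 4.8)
The plan is to remove the mixed monomial $y^2x^2$ by a Tschirnhaus-type substitution, which is legitimate precisely because $3$ is invertible in $R$ once $\chara(\kappa)\neq 3$. First I would regard $f=y^3+y^2x^2+x^4$ as a monic cubic in $y$ with $y^2$-coefficient $x^2$, and set $y_1=y+\tfrac{x^2}{3}$. Since $\tfrac{x^2}{3}\in\mathfrak{m}^2$ we still have $\mathfrak{m}=\langle x,y_1\rangle$. A short expansion (using $3\cdot\tfrac{x^2}{3}=x^2$ to see the $y_1^2x^2$ terms cancel) gives
\begin{equation*}
f = y_1^3 - \tfrac{x^4}{3}\,y_1 + x^4 + \tfrac{2}{27}\,x^6 = y_1^3 + x^4 u, \quad\text{where } u := 1 - \tfrac{y_1}{3} + \tfrac{2}{27}\,x^2,
\end{equation*}
and $u$ is a unit because its residue in $\kappa$ equals $1$.

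The next step is to absorb the unit $u$ into the $x$-parameter. Since $\chara(\kappa)\neq 2$, the polynomial $T^4-u$ reduces mod $\mathfrak{m}$ to $T^4-\bar u$ with $\bar u\in\kappa^\times$; choosing a fourth root $\bar v$ of $\bar u$ in the algebraically closed field $\kappa$, we have $4\bar v^3\neq 0$, so $\bar v$ is a simple root, and Hensel's lemma (applicable as $R$ is Henselian) lifts it to $v\in R$ with $v^4=u$. Putting $x_1:=y_1$ and $x_2:=vx$, we obtain $f=x_1^3+x_2^4$ exactly, and $\langle x_1,x_2\rangle=\langle y_1,x\rangle=\langle y,x\rangle=\mathfrak{m}$ because $v$ is a unit and $y-y_1\in\langle x\rangle$. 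Taking $H(z_1,z_2)=z_1^3+z_2^4$ (coefficients in $\{0,1\}$), we then have $\langle f\rangle=\langle H(x_1,x_2)\rangle$ and $\langle y^3+x^4\rangle=\langle H(y,x)\rangle$ with $\langle y,x\rangle=\mathfrak{m}$, which is exactly the assertion $y^3+x^4+y^2x^2\looparrowright y^3+x^4$.

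I do not expect a real obstacle here. The two points requiring a small amount of care are that the substitution $y_1=y+\tfrac{x^2}{3}$ genuinely keeps $x,y_1$ a minimal generating set of $\mathfrak{m}$ (immediate, since the correction term lies in $\mathfrak{m}^2$), and that a fourth root of the unit $u$ exists in $R$, which uses only $\chara(\kappa)\neq 2$ together with the Henselian hypothesis. (Alternatively, this lemma could be deduced from the $E_6$ determinacy implicit in the preceding lemma, but the direct computation above is shorter and self-contained.)
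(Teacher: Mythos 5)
Your proposal is correct and follows essentially the same route as the paper: both complete the cube via $y_1=y+\frac{x^2}{3}$ (legitimate since $\chara(\kappa)\neq 3$) to reach $y_1^3+u\,x^4$ with $u$ a unit congruent to $1$ modulo $\mathfrak{m}$, and then absorb the unit by a Hensel root extraction. The only cosmetic difference is that you take a fourth root of $u$ and rescale $x$ (using $\chara(\kappa)\neq 2$), whereas the paper takes a cube root of $u^{-1}$ and rescales $y_1$; both are available under the standing Henselian hypothesis, so the arguments are interchangeable.
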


\begin{proof}
    We can write 
    \begin{equation*}
          y^3+  x^4 +  y^2 x^2 =   \left( y+\frac{ x^2}{3}\right)^3 +  x^4\left(1-\frac{y}{3}-\frac{ x^2}{27}\right),
    \end{equation*}
    which tells us that if we set $y_1 = y+\frac{x^2}{3}$, there exists some unit $u$ such that $ y^3+  x^4 +  y^2 x^2= y_1^3+u x^4=u(u^{-1} \cdot y_1^3+x^4)$. But, since $p>3$, we can take the $3$rd root of $u^{-1}$,  and so by setting $y_2=y_1 u^{-\frac{1}{3}}$ we have that $u^{-1} \cdot y_1^3+x^4= (u^{-\frac{1}{3}} y_1)^3+ x^4 = y_2^3+ x^4$, as desired.
\end{proof}

\begin{lemma}[The $E_7$ case when $\chara(\kappa)\neq 3$]
    If $\chara(\kappa) \neq 3$ then $   y^3 + y x^3 +  y^2 x^2 \looparrowright    y^3 + y x^3$. 
\end{lemma}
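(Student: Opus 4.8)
The plan is to reduce $f := y^3+yx^3+y^2x^2$ to $y^3+yx^3$ by essentially one change of generators, in the spirit of the $E_6$ reduction above, but with a substitution that adds a multiple of $y$ to $x$ rather than completing the cube in $y$. (Completing the cube in $y$ merely replaces $y^2x^2$ by an $x^5$-term and does not land on the target, so the shape of the substitution is the one point that needs a moment's thought; everything else is bookkeeping.) Assigning weights $3$ to $y$ and $2$ to $x$, the terms $y^3$ and $yx^3$ form the weighted-homogeneous core of weight $9$ and $y^2x^2$ is a weight-$10$ perturbation, and the substitution $x\mapsto x+\tfrac13 y$ leaves the core fixed while killing $y^2x^2$ outright; this is exactly where $\chara(\kappa)\neq 3$ enters.

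Concretely, I would put $a := x+\tfrac13 y$ and $b := y$; the change of variables is unipotent, so $\mathfrak m=\langle a,b\rangle$. Substituting $x=a-\tfrac13 b$, $y=b$ into $f$ and collecting terms, the two $a^2b^2$-contributions --- one coming from $b\,(a-\tfrac13 b)^3$, the other from $b^2(a-\tfrac13 b)^2$ --- cancel, and what remains is
\[
f \;=\; a^3 b \;+\; b^3\!\left(1-\tfrac a3+\tfrac{2}{27}\,b\right).
\]
Since $w := 1-\tfrac a3+\tfrac{2}{27}b\equiv 1 \bmod \mathfrak m$, it is a unit of $R$, so $f = a^3 b + w b^3$.

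It then remains to absorb $w$, which is the same unit-rewriting maneuver already used in the $E_6$ and $E_8$ lemmata above. As $R$ is Henselian and $\chara(\kappa)\neq 3$, the polynomial $T^3-w$ reduces mod $\mathfrak m$ to $T^3-1$, which has $T=1$ as a simple root; hence $T^3-w$ has a root $w^{1/3}\in R$, necessarily a unit. Setting $b_1 := w^{1/3}b$ gives $w b^3 = b_1^3$ and $b = w^{-1/3}b_1$, so $f = b_1^3 + w^{-1/3}a^3 b_1$. Applying Hensel's lemma once more to extract a cube root $w^{-1/9}$ of the unit $w^{-1/3}$ and putting $a_1 := w^{-1/9}a$, we get $f = b_1^3 + b_1 a_1^3$. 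Now $\langle a_1,b_1\rangle = \langle a,b\rangle = \mathfrak m$, and with $H(z_1,z_2) := z_1^3+z_1z_2^3$ we have $\langle f\rangle = \langle H(b_1,a_1)\rangle$ while $\langle y^3+yx^3\rangle = \langle H(y,x)\rangle$; by Definition~\ref{def:can_be_written} this gives $f\looparrowright y^3+yx^3$.

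The only genuine content is identifying the substitution $x\mapsto x+\tfrac13 y$; the rest is the coefficient computation in the display together with two invocations of Hensel's lemma to take cube roots of units congruent to $1$, which is licit precisely because $\chara(\kappa)\neq 3$. (If one tried the naive $E_6$-style route of completing the cube in $y$ first, one would be left chasing an $x^5$-term to higher and higher order, which is not available here since $R$ is only Henselian, not complete.)
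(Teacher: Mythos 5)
Your proof is correct and is essentially the paper's argument: the substitution $x\mapsto x+\tfrac13 y$ is exactly the "completing the cube" step the paper performs (yielding $y x_1^3+u\,y^3$ with $u$ a unit congruent to $1$ modulo $\mathfrak m$), and your absorption of the unit by Henselian cube roots matches the paper's rescaling $x_2=u^{-1/3}x_1$, differing only in that you rescale both generators to get an exact equality rather than an equality of ideals up to a unit factor.
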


\begin{proof}
    We can write $ y x^3 +  y^2 x^2= y\left( x^3+ y x^2\right)$. Therefore, since $\chara(\kappa) \neq 3$ then $3 \in R$ is invertible and so by completing the cube, there exists some unit $u$ such that 
    \begin{equation*}
        y^3 + y x^3 +  y^2 x^2=   y\left( x+\frac{ y}{3}\right)^3 + u y^3.
    \end{equation*}
    Thus by setting $x_1=x+\frac{ y}{3}$ we get that $y^3 + y x^3 +  y^2 x^2=u_1(y\cdot u_1^{-1}\cdot x_1^3+y^3)$ for some unit $u_1$. By setting $x_2=u_1^{-\frac{1}{3}}x_1$ we can conclude that $y^3 + y x^3 +  y^2 x^2=u_1(y x_2^3+y^3)$, as desired. 
\end{proof}

\begin{lemma}[The $E_8$ case when $\chara(\kappa) \neq 3$]
    If $\chara(\kappa) \neq 3$ then we have the following equivalences:
    \begin{enumerate}
        \item $  y^3+ y x^4+ x^5+ y^2 x^2 \looparrowright   y^3+ y^2 x^2+ x^5$. 
        \item $  y^3+ y x^4+ x^5+ y^2 x^3 \looparrowright   y^3+ y^2 x^3+ x^5$.
        \item $   y^3 + x^5 +  y^2 x^2 \looparrowright    y^3 + x^5 +  y x^4$.
        \item $   y^3 + x^5 +  y^2 x^3 \looparrowright    y^3 + x^5$.
        \item $  y^3+ y^2 x^4+ x^5 \looparrowright   y^3+ x^5$.
    \end{enumerate}
\end{lemma}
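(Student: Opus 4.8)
The plan is to run all five equivalences through one uniform recipe: \emph{complete the cube in $y$, then normalize the unit coefficients by rescaling the generators}. Since $\chara(\kappa)\neq 3$ the element $3$ is a unit of $R$, so whenever we meet an expression $y^3+\alpha y^2x^j+(\text{terms of higher }\mathfrak{m}\text{-order})$ with $\mathfrak{m}=\langle x,y\rangle$ and $\alpha\in R$, replacing $y$ by $y-\tfrac{\alpha}{3}x^j$ (which preserves $\langle x,y\rangle=\mathfrak{m}$) kills the $y^2x^j$ term and, crucially, produces only new terms of strictly larger $\mathfrak{m}$-order. I would carry this out term by term in each of (1)--(5).

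For (4) and (5) the substitutions $y\mapsto y-\tfrac{x^3}{3}$ and $y\mapsto y-\tfrac{x^4}{3}$ leave an expression of the shape $y^3+ux^5$ with $u\equiv 1\pmod{\mathfrak{m}}$ a unit (every term other than $y^3$ either carries a factor $x^5$ or has order $>5$, hence is swallowed into $ux^5$). Then $y^3+ux^5=u\bigl((u^{-1/3}y)^3+x^5\bigr)$, where $u^{-1/3}\in R$ exists because $R$ is Henselian, $\kappa$ is algebraically closed, and $\chara(\kappa)\neq 3$; passing to the generator $u^{-1/3}y$ gives $\langle y^3+ux^5\rangle=\langle (u^{-1/3}y)^3+x^5\rangle$ with $\mathfrak{m}=\langle x,u^{-1/3}y\rangle$, so the left side can be written as $y^3+x^5$.

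For (3) the substitution $y\mapsto y-\tfrac{x^2}{3}$ removes the $y^2x^2$ term but leaves behind a genuine $yx^4$ term, so the left side becomes $y^3+v_2\,yx^4+v_3\,x^5$ with $v_2,v_3$ units. Now I would rescale: substituting $y\mapsto ay$, $x\mapsto bx$ turns the three coefficients into $a^3$, $v_2ab^4$, $v_3b^5$, and forcing them equal amounts to solving $a^2=v_2b^4$ and $a^3=v_3b^5$ for units $a,b$; this is where one extracts a square root of $v_2$ and suitable roots of units, all possible since $\kappa$ is algebraically closed and $R$ Henselian (the precise exponents are the solution of the associated linear system, exactly as in the $E_6$, $E_7$, and $E_8$ Lemmata above). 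One then obtains $f=\lambda(y^3+yx^4+x^5)$ for a unit $\lambda$, i.e.\ $f\looparrowright y^3+x^5+yx^4$. For (1) and (2) the same cube-completion (with $j=2$, resp.\ $j=3$) reduces the left side, after absorbing the higher-order contributions and rescaling as for (3), to $y^3+yx^4+x^5$; since this bridging polynomial is realized by one and the same monomial pattern that occurs on the right of (3), I can chain with (3) (which is symmetric straight from the definition of $\looparrowright$, and composes because the intermediate form is a standard normal form) to rewrite the left side as $y^3+y^2x^2+x^5$, and analogously for (2).

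The step I expect to require the most care is the monomial bookkeeping in the cube completion: one must verify in each of (1)--(5) that the terms $3y(\tfrac{\alpha}{3}x^j)^2$ and $(\tfrac{\alpha}{3}x^j)^3$ coming from expanding $(y-\tfrac{\alpha}{3}x^j)^3$, together with the cross terms from the other monomials present, either have $\mathfrak{m}$-order $>5$ (and so vanish into the units multiplying $x^5$) or land in order exactly $5$ and recombine into unit multiples of $yx^4$ and $x^5$ without spoiling the normal form. This is precisely the point at which $\chara(\kappa)=3$ would be fatal, and --- together with the root extractions, which also need $\chara(\kappa)\neq 3$ and $\kappa$ algebraically closed --- it is the only part of the argument beyond routine algebra.
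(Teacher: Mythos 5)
Your handling of items (1), (3), (4) and (5) is correct and is essentially the paper's own argument: complete the cube in $y$ (using that $3$ is a unit), absorb the leftover terms --- which in these cases are all multiples of $x^5$ --- into unit coefficients, and then rescale the two generators by solving the small exponent system, extracting square and cube roots of units (available since $R$ is Henselian, $\kappa$ is algebraically closed and $\chara(\kappa)\neq 2,3$). Your chaining device for (1) is also legitimate exactly as you phrase it: both sides are shown to generate the same ideal as the fixed $0/1$-polynomial $z_2^3+z_2z_1^4+z_1^5$ evaluated at suitable generating sequences, which is precisely the definition of $\looparrowright$, so no general transitivity is invoked.

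The gap is in item (2), where ``analogously'' does not go through. Your own reduction of the left-hand side $y^3+yx^4+x^5+y^2x^3$ (cube completion with $j=3$) produces $y_1^3+u\,y_1x^4+v\,x^5$ with $u,v$ units, i.e.\ after rescaling the bridge form $y^3+yx^4+x^5$. To chain as in (1) you would need the right-hand side $y^3+y^2x^3+x^5$ to be writable as that same bridge form, and nothing available gives this: item (4) writes it as $y^3+x^5$ instead. Moreover this cannot be repaired under the stated hypothesis $\chara(\kappa)\neq 3$: when $\chara(\kappa)=5$ the forms $y^3+x^5$ and $y^3+yx^4+x^5$ are the two distinct normal forms $E_8^0$ and $E_8^1$ (this is exactly why $E_8^1$ appears as a separate entry in the main theorem's table, and over $\kappa[[x,y]]$ the relation $\looparrowright$ implies contact equivalence), so item (2) as stated would identify $E_8^0$ with $E_8^1$ in characteristic $5$ and is in fact false there. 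What your computation actually proves is $y^3+yx^4+x^5+y^2x^3\looparrowright y^3+yx^4+x^5$; only when $\chara(\kappa)\notin\{3,5\}$ can one continue, via the $\chara(\kappa)\neq 5$ lemma, to $y^3+x^5$ and hence to the stated right-hand side through the common form $z_2^3+z_1^5$. (The paper's own proof of (2) shares this defect: the step rewriting $y_1^3+\bigl(y_1-\frac{x^3}{3}\bigr)x^4+u_1x^5$ as $u_2y_1^3+u_3x^5+y_1^2x^3$ is not possible, since the expression contains a unit multiple of $y_1x^4$ and no $y_1^2x^3$ term can arise from it.) So you should either restrict (2) to $\chara(\kappa)\notin\{3,5\}$ or replace its right-hand side by $y^3+yx^4+x^5$; as written, the ``analogous'' chaining is a genuine hole, not a routine omission.
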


\begin{proof}
    \begin{enumerate}
        \item By completing the cube we have that $  y^3+ y x^4+ x^5+ y^2 x^2= \left( y+\frac{ x^2}{3}\right)^3 +  x^5+u_1 y x^4$ for some unit $u_1$. Therefore, by setting $y_1 = y-\frac{x^2}{3}$ we get $\left( y+\frac{ x^2}{3}\right)^3 +  x^5+u_1 y x^4=  y_1^3+ x^5+u_1\left( y_1-\frac{ x^2}{3}\right) x^4$. Now, by setting $y_2=y_1-\frac{x^2}{3}$ we get that $y_1^3+ x^5+u_1\left( y_1-\frac{ x^2}{3}\right) x^4=  y_2^3+u_2 x^5+u_3 y_2 x^4$ for some units $u_2$ and $u_3$. Thus, by setting $x_1=u_2^{\frac{1}{4}}x$ we have that  $ y_2^3+u_2 x^5+u_3 y_2 x^4 = y_2^3+u_4 x_1^5+ y_2 x_1^4$ for some $u_4$, and thus  $y_2^3+u_4 x_1^5+ y_2 x_1^4 \looparrowright y^3+ x^5+ y x^4$. 
        \item By completing the cube, there exists a unit $u_1$ such that $  y^3+ y x^4+ x^5+ y^2 x^3= \left( y+\frac{ x^3}{3}\right)^3+ y x^4+u_1 x^5$. Thus, if we set  $y_1=  y-\frac{x^3}{3}$ we get $\left( y+\frac{ x^3}{3}\right)^3+ y x^4+u_1 x^5=y_1^3+\left( y_1-\frac{ x^3}{3}\right) x^4+u_1 x^5$. In turn, we can write $y_1^3+\left( y_1-\frac{ x^3}{3}\right)^3 x^4+u_1 x^5= u_2 y_1^3+ u_3x^5+ y_1^2 x^3$ for some units $u_2$ and $u_3$,  and we can see that $ u_2 y_1^3+ u_3x^5+ y_1^2 x^3 \looparrowright  y^3+ x^5+ y^2 x^3$. 
        \item  If we complete the cube and define $y_1= y-\frac{x^2}{3}$ we can write 
        \begin{equation*}
               y^3 +  x^5 +  y^2 x^2 =  \left(y+\frac{x^2}{3}\right)^3+ x^5-\frac{yx^4}{9} - \frac{x^6}{27}=y_1^3 +  x^5 -\frac{ y_1 x^4}{9} - \frac{2x^6}{27}. 
        \end{equation*}

          Now, we can write  $y_1^3 +  x^5 -\frac{ y_1 x^4}{9} - \frac{2x^6}{27} =  y_1^3 + u_1  x^5 + u_2 y_1 x^4$ for some units $u_1$ and $u_2$. Thus, if we define $y_2 = u_2 y_1$ we get $y_1^3 + u_1  x^5 + u_2 y_1 x^4=y_2^3 + u  x^5 +  y_2 x^4$ for some $u$, and  by setting $y_3 = y_2u^{\frac{4}{2}}$ and $x_1= xu^{\frac{3}{2}}$ we get that $y_2^3 + u  x^5 +  y_2 x^4 \looparrowright y^3 +  x^5 +  y x^4$.

        \item If we complete the cube and define $y_1 = y-\frac{x^2}{3}$, then there exists some unit $u$ such that
        \begin{equation*}
               y^3 + x^5 +  y^2 x^3 =   \left( y+\frac{ x^3}{3}\right)^3 + x^5 +  y^2 x^3 - \frac{ y x^6}{3} - \frac{x^9}{27}=u((u^{-\frac{1}{3}}y_1)^3 + x^5),
        \end{equation*}
        which in turn can be written as $y_2^3+x^5$ for $y_2=u^{-\frac{1}{3}}y_1$. 
    \item  By completing the cubic, there exists some unit $u$ such that $  y^3+ y^2 x^4+ x^5= \left( y+\frac{ x^2}{3}\right)^3+u x^5$. By defining $y_1 = y+\frac{ x}{3}$  we get $\left( y+\frac{ x}{3}\right)^3+u x^5=y_1^3+ux_1^5$, and so by setting $y_2=u^{-\frac{1}{3}}y_1$ we get $y_1^3+ux_1^5 = u(y_2^3+x_1^5) \looparrowright y_2^3  +x_1^5$. 
    \end{enumerate}
\end{proof}

\begin{lemma}[The $E_8$ case when $\chara(\kappa) \neq 5$]
    If $\chara(\kappa) \neq 5$ then we have the following equivalences:
    \begin{enumerate}
        \item $y^3+ y^2 x^4+ x^5 \looparrowright   y^3+ x^5$.
        \item $y^3+ y x^4+ x^5+ y^2 x^2 \looparrowright   y^3+ y^2 x^2+ x^5$.
        \item $y^3+ y x^4+ x^5+ y^2 x^3 \looparrowright   y^3+ y x^4+ x^5$.
        \item $y^3 + x^5+  y x^4 \looparrowright    y^3 + x^5+  y^2 x^3$.
        \item $   y^3 + x^5+  y x^4 \looparrowright    y^3 + x^5$.
    \end{enumerate}
\end{lemma}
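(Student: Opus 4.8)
The plan is to run the same machine as in the preceding lemma, but with the roles of the two distinguished monomials of $E_8$ exchanged. There one completes the cube in $y$, using that $3$ is a unit; here, since $\chara(\kappa)\neq 5$, one completes the fifth power in $x$. Concretely, organize a representative of the class in question by powers of $x$ as in Lemma~\ref{lem:jet_3-form}, absorb the $\langle x\rangle$-part of the coefficient of $x^4$ into the (unit) coefficient of $x^5$ so that the $x^4$-coefficient $\lambda$ lies in $\langle y\rangle$, and perform the Tschirnhaus substitution $x\mapsto x-\lambda/5$. Since $5$ is invertible and $\lambda/5\in\langle y\rangle$, this again produces a pair of generators of $\mathfrak{m}$, and it kills the $x^4$-coefficient, so that afterwards $\langle f\rangle$ is generated by something of the shape $\theta\,y^3+(\text{unit})x^5+(\text{correction})$.

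The second step disposes of the correction. Every monomial it contributes is of one of two kinds: it is divisible by $y^3$, in which case it is swept into the unit $\theta$ multiplying $y^3$; or it is a strictly higher‑order multiple of one of the monomials already occurring in the target normal form, in which case it is swept into that monomial's unit coefficient. The third step normalizes: factor out $\theta$, so $\langle f\rangle=\langle y^3+(\text{unit})x^5+\cdots\rangle$; use Hensel's lemma (legitimate since $R$ is Henselian and $\chara(\kappa)\neq 5$) to replace $x$ by a unit multiple of itself making the coefficient of $x^5$ equal to $1$; and finally rescale $x\mapsto\alpha x$, $y\mapsto\beta y$ by units $\alpha,\beta$ chosen by solving the small linear system in the exponents that turns the remaining unit coefficients into $1$ — exactly the bookkeeping already used in the $E_6$ and $E_7$ lemmata. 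Items~$(1)$–$(3)$ come out of this directly: in $(1)$ the coefficient cleared is $y^2$, so every correction lands in $\mathfrak{m}^7$ and is absorbed by $y^3$; in $(2)$ the surviving monomial is $y^2x^2$ and the corrections are higher‑order multiples of it; in $(3)$ one instead clears the coefficient of $x^3$ (to remove $y^2x^3$), and checks that the resulting $x^4$‑ and $x^5$‑coefficients remain of the admissible shape so that, after renormalizing, one lands on $y^3+yx^4+x^5$.

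For $(4)$ and $(5)$ the substitution clearing the $x^4$‑coefficient turns the monomial $yx^4$ into one of $x$‑degree $3$ and $y$‑degree $2$, i.e. up to a unit and higher‑order terms into $y^2x^3$; this is precisely the assertion of $(4)$. To finish $(5)$ one must still eliminate this residual $y^2x^3$. When $3$ is also invertible a single further completion of the cube in $y$ does it: the terms it produces have order $\ge 6$ and are absorbed by $x^5$, after which one re‑extracts a fifth root, just as in the corresponding item of the preceding lemma. The characteristic $3$ case is genuinely different — there the computation gives $\langle f\rangle=\langle y^3+x^5+(\text{unit})\,y^2x^3\rangle$, which is the $E_8^1$‑type form rather than $y^3+x^5$ — so for $(5)$ (and for $(1)$) one should read the standing hypotheses as also including $\chara(\kappa)\neq 3$, or, alternatively, run the two moves "clear the leading $x$‑coefficient / absorb" iteratively, pushing the residual term into $\mathfrak{m}^N$ for $N\to\infty$ and concluding by the Cauchy‑sequence argument of Lemma~\ref{lem:D_k_det} over the completion of $R$.

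The main obstacle is almost entirely organizational: for each of the five items one must verify that every monomial manufactured by the fifth‑power substitution is genuinely absorbable (divisible by $y^3$, or a higher‑order multiple of a monomial kept in the target), and that the exponent linear systems governing the final rescaling have unit solutions over $R$. The one place with content beyond the $\chara(\kappa)\neq 3$ lemma is the tail of $(5)$: clearing the $x^4$‑term alone does not suffice, and one must either invoke invertibility of $3$ as well, or pass to the completion — and in characteristic exactly $3$ the equivalence $(8)\looparrowright(5)$ must be replaced by $(8)\looparrowright(7)$.
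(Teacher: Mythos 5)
Your method is the same as the paper's: complete the fifth power in $x$ (using that $5$ is a unit), absorb each correction term either into the unit multiplying $y^3$ (when it is divisible by $y^3$) or into the unit coefficient of a retained monomial, and then normalize the surviving units by Hensel root extractions and the exponent bookkeeping already used for $E_6$ and $E_7$. For items (1)--(4) this is exactly the paper's argument.

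Your flag on item (5) is correct, and it in fact pinpoints a slip in the paper's own proof: after substituting $x_1=x+\frac{y}{5}$, the residual term produced by $-\frac{2}{5}\bigl(x_1-\frac{y}{5}\bigr)^2y^2x_1$ is (a unit times) $x_1^3y^2$, not $x_1^3y^3$ as written, so it cannot be absorbed into the $y^3$-unit; killing it is precisely a completion of the cube and needs $3$ invertible. In characteristic $3$ the equivalence in (5) is genuinely false: over $\kappa[[x,y]]$ one has $\tau(y^3+x^5+yx^4)=10$ while $\tau(y^3+x^5)=12$, and $\looparrowright$ agrees with contact equivalence there, so item (5) must carry the extra hypothesis $\chara(\kappa)\neq 3$; in characteristic $3$ one stops at item (4), i.e.\ at $y^3+x^5+y^2x^3$, which is what the classification table lists, so the main theorem is unaffected. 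Two corrections to your write-up: your fallback for (5) --- iterating the clearing step and passing to the completion as in Lemma~\ref{lem:D_k_det} --- cannot work in characteristic $3$, since the residual $y^2x^3$ is a genuine obstruction (the two classes are inequivalent), so that alternative should be dropped; and your hedge about item (1) is unnecessary, because instead of the cube root used in the paper one can write $uy^3+x_1^5=u\bigl(y^3+u^{-1}x_1^5\bigr)$ and extract a fifth root of $u^{-1}$ (available since $\chara(\kappa)\neq 5$ and $\kappa$ is algebraically closed), so (1) holds under the stated hypothesis.
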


\begin{proof}
    \begin{enumerate}
        \item If we complete the quintic, there exists some unit $u$ such that $  y^3+ y^2 x^4+ x^5 = u y^3+\left( x+\frac{ y^2}{5}\right)^5$. Therefore, by defining $x_1 = x-\frac{ y^2}{5}$ and $y_1=u^{\frac{1}{3}}y$ we get $ y^3+ y^2 x^4+ x^5 = y_1^3 + x_1^5$. 
        \item If we complete the quintic then there exists some units $u_1$ and $u_2$ such that $  y^3+ y x^4+ x^5+ y^2 x^2= u_1 y^3+u_2 y^2 x^2+\left( x+\frac{ y}{5}\right)^5$, and by defining $x_1=x-\frac{ y}{5}$ we get $y^3+ y x^4+ x^5+ y^2 x^2= u_3 y^3+u_4 y^2\left( x_1-\frac{ y}{5}\right)^2+ x_1^5$ for some units $u_3$ and $u_4$. In turn, we have that $u_3 y^3+u_4 y^2\left( x_1-\frac{ y}{5}\right)^2+ x_1^5= u_5 y^3+u_6 y^2 x_1^2+ x_1^5$ for some units $u_5$ and $u_6$. In turn, since $u_5$ is a unit we get $u_5 y^3+u_6 y^2 x_1^2+ x_1^5 \looparrowright y^3+ u_7 y^2 x_1^2+ u_8x_1^5$ for some units $u_7$ and $u_8$. Thus, by setting $x_2=u_7^{0.5}x_1$ we get that $y^3+ u_7 y^2 x_1^2+ u_8x_1^5 = y^3+ y^2 x_2^2+ u_9x_2^5$ for some unit $u_9$, and by setting $x_2=u_9x_3$ and $y_1=u_9^2y$ we have that $y^3+ y^2 x_2^2+ u_9x_2^5= u_9^6(y_1^3+ y_1^2 x_3^2+ x_3^5) \looparrowright y^3+ y^2 x^2+ x^5$. 
        \item  By completing the quintic we have that $  y^3+ y x^4+ x^5+ y^2 x^3= u_1 y^3+\left( x+\frac{ y}{5}\right)^5- y^2 x^3$ for some unit $u_1$. Therefore, by defining $x_1 = x-\frac{y}{5}$ we get that $u_1 y^3+\left( x+\frac{ y}{5}\right)^5- y^2 x^3= u_2  y^3+ x^5- y\left( x-\frac{ y}{5}\right)^5$ for some unit $u_2$. Thus, there exists some unit $u_3$ such that 
        \begin{equation*}
            u_2  y^3+ x^5- y\left( x-\frac{ y}{5}\right)^5=  u_3  y^3+ x^5 +  y^2 x^2 \looparrowright y^3+ x^5+ y^2 x^2.
        \end{equation*}
        \item We can write 
    \begin{equation*}
           y^3 + x^5+  y x^4 =   \left( x+\frac{y}{5}\right)^5-2 y^2 x^3 + u y^3
    \end{equation*}
    for some unit $u$. Therefore by setting $ x_1 = x-\frac{ y}{5}$ we get that $   y^3 + x^5+  y x^4 = x_1^5 - 2 y^2 (x_1-\frac{y}{5})^3 + u_0 y^3$ for some other unit $u_0$. We can write $x_1^5 - 2 y^2 (x_1-\frac{y}{5})^3 + u_0 y^3$ as $x_1^5-2 x_1^3 y^2+u_1 y^3$ for some unit $u_1$, and we have that 
    \begin{equation*}
        x_1^5-2 x_1^3 y^2+u_1 y^3 =u_1 (u_1^{-1} x_1^5+ y^3 + (-2u_1^{-1})x_1^3y^2) \looparrowright y^3 + x^5+  y^2 x^3. 
    \end{equation*}
    \item By completing the quintic, there exists some unit $u$ such that  $y^3+x^5+yx^4=uy^3 + (x+\frac{y}{5})^5 - \frac{2}{5}x^2y^2(x+\frac{y}{5})$. Thus, by setting $x_1=x+\frac{y}{5}$ we get that $y^3+x^5+yx^4=uy^3+x_1^5-\frac{2}{5}(x_1-\frac{y}{5})^2y^2x_1$. Thus, there are units $u_1$ and $u_2$ such that $y^3+x^5+yx^4= u_1y^3 + x_1^5 + u_2x_1^3y^3$. Therefore, by finding $x_2$ and $y_1$ such that $y=u_1^{\frac{1}{2}}u_2^{-\frac{3}{20}}y_1$ and $x_1=u_1^{\frac{1}{2}}u_2^{-\frac{1}{4}}x_2$, we get that $u_1y^3 + x_1^5 + u_2x_1^2y^3 \looparrowright y_1^3 + x_2^5 + x_2^2y_1^3$. 
    \end{enumerate}
\end{proof}

\begin{remark}
    \textup{In the settings of Lemma~\ref{lem:jet_3-form}, if $\ord\left(b\left( x\right)\right) \geq 1$ and $\ord\left(c\left( x\right)\right) \geq 2$ then there exists some $b_1(x)$ and $c_1(x)$ such that $b(x)=b_1(x)x$ and $c(x)=c_1(x)x^2$, and thus $\theta y^3 + a\left( x\right) y^2  x^2 + b\left( x\right) y x^3 + c\left( x\right) x^4 = \theta y^3 + a\left( x\right) y^2  x^2 + b_1\left( x\right) y x^4 + c_1\left( x\right) x^6 \in \langle  y,  x^2 \rangle^3$.  }
\end{remark}

We can summarize this sections results in the following corollary:

\begin{corollary}\label{cor:final}
    Let $(R, \mathfrak{m}, \kappa)$ be a  $2-$dimensional complete regular local ring and let $f \in R$ be of order $3$. Then either there exists some $x,y \in \mathfrak{m}$ such that $f \in \langle x, y^2 \rangle^3$ or $f$ can be written as one of the elements in the~\nameref{thm:mainintro} (with $n=2$). 
\end{corollary}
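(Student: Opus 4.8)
The plan is to assemble Corollary~\ref{cor:final} directly from the trichotomy established by Lemma~\ref{lem:jet_3_2} together with the three ``case'' Propositions and Lemmata that follow it. First I would invoke Lemma~\ref{lem:jet_3_2}: since $f$ has order $3$ and $\kappa$ is algebraically closed, $\jet_3(f)$ is well defined and equals one of $xy(x+y)$, $x^2y$, or $x^3$. In the first two cases the preceding Proposition (the $D_k$ classification) tells us that $f \looparrowright D_4$ or $f \looparrowright D_k$ for some $k \in \{5, \dots, \infty\}$, which are among the forms listed in the~\nameref{thm:mainintro} with $n=2$ (after the trivial relabelling of the minimal generators $x_1,x_2$, and noting $D_\infty$ appears in the uncountable table). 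So after this step only the case $\jet_3(f)=x^3$ remains.

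For the case $\jet_3(f)=x^3$, I would apply Proposition~\ref{prop:splitting_n=2}: it states that either $f \in \langle x_1, x_2^2\rangle^3$ for some $x_1,x_2 \in \mathfrak{m}$ — which is exactly the first alternative in the corollary's statement — or $f$ can be written as one of the eight explicit forms \ref{Eq1}--\ref{Eq8}. Here I must reconcile these eight forms with the $E_k$ entries of the main theorem's table. Since $R$ is complete it is in particular Henselian, and $\kappa$ is algebraically closed, so the equivalences recorded at the end of Proposition~\ref{prop:splitting_n=2} apply: when $\chara(\kappa)\neq 3$, forms \ref{Eq2},\ref{Eq4},\ref{Eq6},\ref{Eq7} collapse onto \ref{Eq1},\ref{Eq3},\ref{Eq8},\ref{Eq5} respectively, and when additionally $\chara(\kappa)\neq 5$ form \ref{Eq8} collapses onto \ref{Eq5}. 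I would then go through the characteristic cases: if $\chara(\kappa)\notin\{3,5\}$ only \ref{Eq1}, \ref{Eq3}, \ref{Eq5} survive, which are $E_6=x_1^3+x_2^4$, $E_7=x_1(x_1^2+x_2^3)=x_1^3+x_1x_2^3$, and $E_8=x_1^3+x_2^5$; if $\chara(\kappa)=5$ we additionally keep \ref{Eq8} $=x_1^3+x_2^5+x_1x_2^4$, which is the listed $E_8^1$ in characteristic $5$; and if $\chara(\kappa)=3$ all eight forms may occur, matching $E_6,E_6^1,E_7,E_7^1,E_8,E_8^1,E_8^2$ (three of the latter in characteristic $3$) — one checks termwise that \ref{Eq1}--\ref{Eq8} are precisely the formulas in the $n=2$ rows of the table, using that $x_1(x_1^2+x_2^3+x_1x_2^2)=x_1^3+x_1x_2^3+x_1^2x_2^2$ identifies \ref{Eq4} with the tabulated $E_7^1$.

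The main obstacle is purely bookkeeping rather than mathematical: making sure the list of eight forms, after applying the characteristic-dependent equivalences, matches the table in the introduction row-for-row (including the duplicated ``$E_8^1$'' label that appears both for $\chara(\kappa)=3$ and $\chara(\kappa)=5$ with different formulas), and confirming that ``$f$ can be written as'' in the sense of Definition~\ref{def:can_be_written} is exactly what Proposition~\ref{prop:splitting_n=2} delivers. There is also a small transitivity point: the corollary chains $f\looparrowright g_1 \looparrowright g_2$ through the Proposition's equivalences, so I would note that $\looparrowright$ is transitive (immediate from Definition~\ref{def:can_be_written}, by composing the two defining polynomials and generator-sequences, or equivalently from the characterization of $\looparrowright$ via invertible generator changes). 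Once these identifications are laid out, the corollary follows by simply reading off Lemma~\ref{lem:jet_3_2}, the $D_k$-Proposition, and Proposition~\ref{prop:splitting_n=2} in turn.

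\begin{proof}
Since $f$ has order $3$ and $\kappa$ is algebraically closed, Lemma~\ref{lem:jet_3_2} applies and $\jet_3(f)$ is one of $xy(x+y)$, $x^2y$, or $x^3$.

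If $\jet_3(f)=xy(x+y)$ or $\jet_3(f)=x^2y$, then by the $D_k$ classification proposition we have $f\looparrowright D_4$ or $f\looparrowright D_k$ for some $k\in\{5,\dots,\infty\}$, all of which appear (with $n=2$) in the~\nameref{thm:mainintro}; note that $D_\infty = x_1x_2^2$ is listed in the uncountable table.

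Now suppose $\jet_3(f)=x^3$. By Proposition~\ref{prop:splitting_n=2}, either $f\in\langle x_1,x_2^2\rangle^3$ for some $x_1,x_2\in\mathfrak{m}$ — giving the first alternative — or $f$ can be written as one of \ref{Eq1}--\ref{Eq8}. Since $R$ is complete it is Henselian, and $\kappa$ is algebraically closed, so the equivalences at the end of Proposition~\ref{prop:splitting_n=2} are available. If $\chara(\kappa)\notin\{3,5\}$, these reduce the list to \ref{Eq1}, \ref{Eq3}, \ref{Eq5}, which are $E_6$, $E_7$, $E_8$ with $n=2$. If $\chara(\kappa)=5$, we keep in addition \ref{Eq8}$\,=x_1^3+x_2^5+x_1x_2^4$, which is the $\chara(\kappa)=5$ row labelled $E_8^1$. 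If $\chara(\kappa)=3$, all of \ref{Eq1}--\ref{Eq8} may occur; using the identity $x_1\bigl(x_1^2+x_2^3+x_1x_2^2\bigr)=x_1^3+x_1x_2^3+x_1^2x_2^2$ one identifies \ref{Eq4} with the tabulated $E_7^1$, and similarly \ref{Eq2},\ref{Eq3},\ref{Eq6},\ref{Eq7},\ref{Eq8} match $E_6^1$, $E_7$, $E_8^1$, $E_8^2$ and $E_6$, $E_8$ termwise with the $n=2$ rows of the table. In every case $f$ can be written as one of the listed forms, where we use that $\looparrowright$ is transitive: if $f\looparrowright g$ via $(H,\underline a,\underline b)$ and $g\looparrowright h$ via $(H',\underline b',\underline c)$, then composing the generator changes (which are invertible) yields $f\looparrowright h$.
\end{proof}
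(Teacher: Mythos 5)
Your proposal is correct and follows essentially the same route the paper intends: Corollary~\ref{cor:final} is stated as a summary, and its proof is exactly the assembly you give — Lemma~\ref{lem:jet_3_2} for the trichotomy of $\jet_3(f)$, the $D_k$ proposition for the jets $xy(x+y)$ and $x^2y$, and Proposition~\ref{prop:splitting_n=2} together with its characteristic-dependent equivalences (and transitivity of $\looparrowright$) for the jet $x^3$. The only blemish is a harmless bookkeeping slip in the characteristic-$3$ matching: \ref{Eq7} is the tabulated $E_8^1$ and \ref{Eq6} is $E_8^2$ (your list swaps them), and \ref{Eq8} in characteristic $3$ reduces to \ref{Eq5} via the $\chara(\kappa)\neq 5$ equivalence rather than being a separate table entry.
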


\begin{remark}\label{rem:final}
    \textup{Note if $(R, \mathfrak{m}, \kappa)$ is $2-$dimensional complete regular local ring and $f \in \mathfrak{m}^2$, then by Proposition 4.1 and Proposition 4.2 in~\cite{buchweitz1987cohen} we know that if $f$ can be written as either $A_\infty$ or $D_\infty$ then $f$ has countable Cohen-Macaulay type (assuming $\kappa$ is uncountable). In addition, if $f$ can be written as one of the $ADE$ singularities (that are neither $A_\infty$ nor $D_\infty$) then $f$ must have finite Cohen-Macaulay type, by a similar computation to the one done in Section 9.4 of~\cite{leuschke2012cohen}. Alternatively, we can see that they satisfy the Drozd-Ro\u{ı}ter conditions for one dimensional rings, as first stated in~\cite{drozd1967commutative} (for more information see Chapter 4 in~\cite{leuschke2012cohen}). In fact,  Green and Reiner in~\cite{green1978integral}, together with the work of \c{C}imen in~\cite{ccimen1998one}, tells us that if $(R, \mathfrak{m})$ is a one-dimensional local ring that is analytically unramified (i.e. the $\mathfrak{m}-$adic completion of $R$ is reduced) with sparse Cohen-Macaulay type, then $f$ must have finite Cohen-Macaulay type (for more information see Chapter 4 of ~\cite{leuschke2012cohen}).}
\end{remark}

Proposition~\ref{prop:n-2}, Proposition~\ref{prop:n-2}, and Corollary~\ref{cor:final} gives us the proof of the~\nameref{thm:mainintro}. 

\section{Generalized Double Branch Cover over Complete Regular Rings}

The goal of this section is to provide a generalization of the results of Chapter 8.2  in~\cite{leuschke2012cohen} to complete regular rings (without looking at power series), which can be thought of as a partial generalization of Kn\"{o}rrer's theorem, first presented in~\cite{knorrer1986cohen}. Specifically, let $(R, \mathfrak{m}, k)$ be a complete regular local ring with $\text{char}(k) \neq 2$ and let $b, a_1, \dots, a_n \in R$ be a minimal sequence that generates $\mathfrak{m}$. Assume that $g \in R$ is a power series in $a_1, \dots, a_n$. Denote $\overline{R} = \frac{R}{\langle b \rangle}$ and denote $\overline{g} = g \mod \langle b \rangle$. The the goal of this section is to understand the relationship between the Cohen-Macaulay type of $S^{\flat} = \frac{\overline{R}}{\langle \overline{g} \rangle}=\frac{R}{\langle b,g \rangle}$ and of $S=\frac{R}{\langle b^2 + g \rangle}$.

\begin{remark}\label{rem:double_cover_name}
    \textup{We call $S$ the "generalized double cover" of $S^\flat$ as an analogue of $\frac{R[[z]]}{\langle h+z^2 \rangle}$ being the double cover of $\frac{R}{\langle h \rangle}$ for $h \in \mathfrak{m}^2$. For more information on double covers and Cohen-Macaulay type, see Chapter 12 in~\cite{yoshino1990maximal}. }
\end{remark}

\begin{lemma}\label{lem:b_not_zd}
    $b$ is not a zero divisor of $S$.
\end{lemma}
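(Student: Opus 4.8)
The plan is to reduce the lemma to the single fact that $b$ does not divide $g$ in $R$, after which a short divisibility argument finishes it. Recall that $R$, being a regular local ring, is a unique factorization domain (Auslander--Buchsbaum), hence in particular a domain, and that $b$ is a prime element of $R$: since $b$ belongs to a regular system of parameters, $\overline{R}=R/\langle b\rangle$ is again a regular local ring, in particular a domain, so $\langle b\rangle$ is prime. We may harmlessly assume $b^{2}+g\neq 0$ (otherwise $S=R$ and there is nothing to prove) and $\overline{g}\neq 0$, equivalently $g\notin\langle b\rangle$, since if $\overline{g}=0$ then $S^{\flat}=\overline{R}$ is regular and we are outside the setting of this section.

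First I would record why $g\notin\langle b\rangle$ whenever $g\neq 0$. Writing $g=\sum_{\underline{i}}u_{\underline{i}}\underline{a}^{\underline{i}}$ with each $u_{\underline{i}}\in R$ a unit or zero, and not all zero, the image $\overline{g}$ of $g$ in $\overline{R}$ is again such a power series in $\overline{a_{1}},\dots,\overline{a_{n}}$, whose coefficients are the $\overline{u_{\underline{i}}}$ (the image of a unit is a unit, the image of $0$ is $0$). Let $d$ be the least value of $|\underline{i}|$ for which $u_{\underline{i}}$ is a unit; then the degree-$d$ part of $\overline{g}$ maps in $\text{gr}_{\overline{\mathfrak{m}}}(\overline{R})\cong\kappa[T_{1},\dots,T_{n}]$ to the form $\sum_{|\underline{i}|=d}(u_{\underline{i}}\bmod\mathfrak{m})\,\underline{T}^{\underline{i}}$, which is nonzero since units reduce to nonzero scalars of $\kappa$ and distinct monomials are linearly independent. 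Hence $\overline{g}\notin\overline{\mathfrak{m}}^{d+1}$, so $\overline{g}\neq 0$.

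Now for the main argument: suppose $c\in R$ with $bc\in\langle b^{2}+g\rangle$, say $bc=(b^{2}+g)d$ for some $d\in R$. Rearranging gives $gd=b(c-bd)\in\langle b\rangle$; since $b$ is prime and $b\nmid g$, we conclude $b\mid d$, say $d=bd'$. Then $bc=b(b^{2}+g)d'$, and cancelling the non-zero-divisor $b$ in the domain $R$ gives $c=(b^{2}+g)d'\in\langle b^{2}+g\rangle$. Thus multiplication by $b$ on $S=R/\langle b^{2}+g\rangle$ has trivial kernel, i.e. $b$ is a non-zero-divisor of $S$. Equivalently, one observes that $b,\,b^{2}+g$ is a regular sequence on $R$ — $b$ is a non-zero-divisor on the domain $R$ and $b^{2}+g\equiv\overline{g}\neq 0$ on the domain $\overline{R}$ — and since regular sequences in a Noetherian local ring may be permuted, $b^{2}+g,\,b$ is also a regular sequence, which is exactly the claim.

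The only step requiring any care is the verification that $g\notin\langle b\rangle$; everything else is formal. This is precisely where the hypothesis that $g$ is a (nonzero) power series in $a_{1},\dots,a_{n}$ enters: passing to $\overline{R}$ keeps $a_{1},\dots,a_{n}$ a regular system of parameters and preserves the leading form of $g$ in the associated graded ring, so $g$ cannot be killed in $\overline{R}$.
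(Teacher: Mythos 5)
Your proof is correct and follows essentially the same route as the paper: rewrite $bc=(b^{2}+g)d$ as $gd=b(c-bd)$, use that $b$ is prime in the UFD $R$ and does not divide $g$ (because $g$ is a power series in $a_{1},\dots,a_{n}$) to conclude $b\mid d$, and cancel $b$. You merely make explicit two points the paper leaves implicit: the verification that $\overline{g}\neq 0$ via its leading form in $\text{gr}_{\overline{\mathfrak{m}}}(\overline{R})$, and the exclusion of the degenerate case $g=0$ (where the statement would in fact fail, since $b^{2}=0$ in $R/\langle b^{2}\rangle$).
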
 

\begin{proof}
    Assume that $h \in R$ satisfies $bh \in \langle g+b^2 \rangle$ and we show that $h \in \langle g+b^2 \rangle$. Since  $bh \in \langle g+b^2 \rangle$ then there exists some $r \in R$ such that $bh=r(g+b^2)$. Therefore $b(h-rb)=gr$. Since $R$ is a regular local ring then it is a UFD, and since $b, a_1, \dots, a_n$ form a regular sequence with $g$ being a power series in $a_1, \dots, a_n$, then we can conclude that $r$ must be divisible by $b$, that is, there exists some $r_0 \in R$ such that $r = b r_0$. This gives us that $h-b^2r_0 = gr_0$, and so $h = r_0(b^2+g) \in  \langle g+b^2 \rangle$, as desired. 
\end{proof}

\begin{definition}
    \begin{itemize}
        \item Given $N \in \mathcal{MCM}(S)$, we denote $N^\flat = \frac{N}{bN}$, viewed as a module over $S^\flat$. 
        \item Given $M \in \mathcal{MCM}(S^\flat)$, we denote $M^\sharp = \syz^{S}_1(M)$, that is, the first syzygy of $M$ with respect to a minimal resolution over $S$, which we view as a module over $S$. 
    \end{itemize}
\end{definition}

The maps $N \mapsto N^\flat$ and $M \mapsto M^\sharp$ allow us to understand the relationship between $\mathcal{MCM}(S)$ and $\mathcal{MCM}(S^\flat)$. As we shall see, these are not direct maps between the two sets, but their decompositions into irreducible components are for special MCM modules.

\begin{remark}
    \textup{ As  in Definition 8.12 in~\cite{leuschke2012cohen} and the discussion after it, note that the module $M^\sharp$ is well defined (up to isomorphism). This is true since by Lemma~\ref{lem:b_not_zd}, $b$ is not a zero-divisor, and thus we have a short exact sequence }
    \begin{equation*}
    \begin{tikzcd}
    0\arrow[r] & R \arrow[r, "z \mapsto bz"] & R \arrow[r] & \overline{R} \arrow[r] & 0.
    \end{tikzcd}
    \end{equation*}
    \textup{Therefore we can conclude that $R$ is indeed the first syzygy of $\overline{R}$ as an $R-$module.} 
\end{remark}

We recall a result by Eisenbud from~\cite{eisenbud1980homological} that we use extensively throughout this appendix. For more details, see Chapter 7 of~\cite{yoshino1990maximal}  or Section 1 of~\cite{buchweitz1987cohen}.\\

\begin{remark}
    \textup{Given a ring $A$, we denote by $\textup{Mat}_{a,b}(A)$ the set of $a \times b$ matrices over $A$. Given a module $M$ over $A$, we denote by $\mathbb{1}_M$ the identity map on $M$. Specifically, we denote $\mathbb{1}_{a \times b} = \mathbb{1}_{\textup{Mat}_{a,b}(A)}$, which we view as the $(a \times b)-$unit matrix over $A$. If the module $M$ is clear, we simply denote $\mathbb{1}$.  }
\end{remark}

\begin{definition}
    Let $(A,\mathfrak{n})$ be a be a regular local ring and let $r \in \mathfrak{n}$. A \textbf{reduced matrix factorization} of $r$ is a pair of matrices $\left(\varphi, \psi\right) \in \textup{Mat}_{a,b}(A) \times \textup{Mat}_{b,a}(A)$  such that:
    \begin{enumerate}
        \item The image of $\varphi$ lies in $\mathfrak{n} \cdot  A^b$,
        \item The image of $\psi$ lies in $\mathfrak{n} \cdot A^a$, 
        \item $\varphi\psi = r \cdot \mathbb{1}_{a\times a}$,
        \item $ \psi \varphi = r \cdot \mathbb{1}_{b \times b} $
    \end{enumerate}
    we say that two matrix factorizations $(\varphi_1, \psi_1)$ and $(\varphi_2, \psi_2)$ are equivalent if there exists some invertible matrices $\alpha$ and $\beta$ such that $\beta \varphi_1 = \varphi_2 \alpha$ and $\alpha \psi_1 = \psi_2 \beta$
\end{definition}

\begin{theorem}[Eisenbud~\cite{eisenbud1980homological}]\label{thm:matrix_fact}
    Let $(A,m)$ be a be a regular local ring and let $r \in A$ be a non unit. Then there there exists a bijection between $\mathcal{MCM}(\frac{A}{\langle r \rangle})$ and equivalence classes of reduced matrix factorizations of $r$. This bijection is given by $(\varphi, \psi) \mapsto \coker(\varphi)$. 
\end{theorem}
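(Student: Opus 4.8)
The plan is to exhibit the map inverse to $(\varphi,\psi)\mapsto\coker(\varphi)$ and to verify that the two constructions are mutually inverse. Throughout write $\bar A=A/\langle r\rangle$, let $d=\dim A$, and note that since $A$ is a domain and $r$ is a non-unit (which we may assume nonzero), $r$ is a nonzerodivisor and $\dim\bar A=d-1$; bars will denote reduction modulo $r$.

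\textbf{From matrix factorizations to modules.} Given a reduced matrix factorization $(\varphi,\psi)\in\Mat_{a,b}(A)\times\Mat_{b,a}(A)$, the identities $\varphi\psi=r\,\mathbb{1}$ and $\psi\varphi=r\,\mathbb{1}$ together with $r$ being a nonzerodivisor force both $\varphi$ and $\psi$ to be injective, and over the domain $A$ this forces $a=b$, so we may take the factorization square. Then $0\to A^a\xrightarrow{\varphi}A^a\to\coker(\varphi)\to 0$ is exact, so $\operatorname{pd}_A\coker(\varphi)\le 1$; multiplication by $r$ on $A^a$ equals $\varphi\psi$, so it kills $\coker(\varphi)$, which is therefore a nonzero $\bar A$-module (nonzero because $\varphi$ is reduced). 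Auslander--Buchsbaum over $A$ gives $\operatorname{depth}\coker(\varphi)=d-\operatorname{pd}_A\coker(\varphi)\ge d-1=\dim\bar A$, and depth does not change in passing from $A$ to $\bar A$, so $\coker(\varphi)$ is maximal Cohen--Macaulay over $\bar A$. Reducedness of $\varphi$ makes the displayed sequence a minimal $A$-resolution, and reducing it modulo $r$ and using injectivity of $\varphi,\psi$ one checks by a short diagram chase (as in~\cite{eisenbud1980homological}) that $\cdots\xrightarrow{\bar\psi}\bar A^a\xrightarrow{\bar\varphi}\bar A^a\xrightarrow{\bar\psi}\bar A^a\xrightarrow{\bar\varphi}\bar A^a\to\coker(\varphi)\to 0$ is a $2$-periodic minimal free resolution over $\bar A$; in particular $\coker(\varphi)$ has no free summand and is well defined up to isomorphism. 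Finally an equivalence $\beta\varphi_1=\varphi_2\alpha$, $\alpha\psi_1=\psi_2\beta$ carries $\operatorname{im}\varphi_1$ onto $\operatorname{im}\varphi_2$ and hence induces $\coker(\varphi_1)\cong\coker(\varphi_2)$, so the assignment descends to equivalence classes.

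\textbf{From modules to matrix factorizations.} Let $M$ be an indecomposable maximal Cohen--Macaulay $\bar A$-module; being non-free it has no free summand. Viewed over $A$ it is annihilated by $r$, hence torsion over the domain $A$, and $\operatorname{depth}_AM=\operatorname{depth}_{\bar A}M=d-1$, so Auslander--Buchsbaum gives $\operatorname{pd}_AM=1$; torsionness forces the two ranks in a minimal $A$-resolution to agree, so $0\to A^a\xrightarrow{\varphi}A^a\to M\to 0$ with $\varphi(A^a)\subseteq\mathfrak n A^a$. Since $rM=0$, the composite $A^a\xrightarrow{r\,\mathbb{1}}A^a\twoheadrightarrow M$ vanishes, so $r\,\mathbb{1}$ factors through $\varphi$: there is a map $\psi$, unique by injectivity of $\varphi$, with $\varphi\psi=r\,\mathbb{1}$, and then $\varphi\psi\varphi=r\varphi=\varphi(r\,\mathbb{1})$ gives $\psi\varphi=r\,\mathbb{1}$ as well. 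If $\psi$ had a unit entry then, after an equivalence, it would split off a trivial block $(1,r)$, exhibiting $\bar A$ as a free summand of $M=\coker(\varphi)$, which is impossible; hence $\psi$ is reduced and $(\varphi,\psi)$ is a reduced matrix factorization of $r$ with $\coker(\varphi)\cong M$.

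\textbf{Mutual inverseness and the main obstacle.} One composite is the identity by the isomorphism $\coker(\varphi)\cong M$ just produced. For the other, given a reduced $(\varphi,\psi)$ the sequence $0\to A^a\xrightarrow{\varphi}A^a\to\coker(\varphi)\to 0$ is already a minimal $A$-resolution of $M=\coker(\varphi)$, so the factorization reconstructed from $M$ uses the same $\varphi$ up to change of bases of the free modules -- that is, up to an equivalence, by uniqueness of minimal free resolutions over the regular ring $A$ -- and the same $\psi$, the unique lift of $r\,\mathbb{1}$ through $\varphi$. Moreover any isomorphism $M\cong M'$ of maximal Cohen--Macaulay modules lifts to an isomorphism of their minimal $A$-resolutions, which is exactly an equivalence of the associated matrix factorizations; so the map on isomorphism/equivalence classes is a bijection, and it matches indecomposable objects with indecomposable ones. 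The homological inputs here ($r$ a nonzerodivisor, Auslander--Buchsbaum, $\operatorname{pd}_AM=1$ for a torsion $\bar A$-module, invariance of depth, and the $2$-periodicity diagram chase) are standard and I would cite~\cite{eisenbud1980homological}; the step I expect to be the real obstacle is the reducedness bookkeeping, namely checking that reducedness of \emph{both} $\varphi$ and $\psi$ corresponds exactly to $\coker(\varphi)$ having no free $\bar A$-summand -- so that one obtains a genuine bijection rather than a correspondence only up to trivial factors -- and that uniqueness of minimal $A$-resolutions together with the unique lift $\psi$ rigidifies the factorization up to equivalence.
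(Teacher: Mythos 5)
The paper does not actually prove this statement---it is recalled from Eisenbud~\cite{eisenbud1980homological} with Chapter~7 of~\cite{yoshino1990maximal} and Section~1 of~\cite{buchweitz1987cohen} cited for details---and your sketch is exactly that standard argument (nonzerodivisor $r$ forces square reduced factorizations, Auslander--Buchsbaum gives $\operatorname{pd}_A M=1$ for an MCM $A/\langle r\rangle$-module, and $\psi$ is the unique lift of $r\cdot\mathbb{1}$ through a minimal presentation, with uniqueness of minimal resolutions giving mutual inverseness up to equivalence), so it is essentially the same approach and correct. The only caveat is the one your last paragraph already flags: as literally stated, $\mathcal{MCM}(A/\langle r\rangle)$ contains the free module $A/\langle r\rangle$, which corresponds only to the non-reduced factorization $(r,\mathbb{1})$, so the bijection is properly between reduced factorizations and MCM modules with no free summand (indecomposable non-free ones on each side), which is precisely the ``reducedness bookkeeping'' your construction of $\psi$ handles.
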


\begin{remark}\label{rem:matrix}
\begin{enumerate}
    \item \textup{Given a reduced matrix factorization $(\varphi, \psi)$, then we denote $\coker(\varphi, \psi) = \coker(\varphi)$ for the sake of consistency, as denoted in Notation 8.4 in~\cite{leuschke2012cohen}.}
    \item \textup{By Proposition 7.7 in~\cite{yoshino1990maximal}, if $M=\coker(\varphi, \psi)$ then $\syz_R^1(M)=\coker(\psi, \varphi)$. }
    \item \textup{The proof of Proposition~\ref{prop:ideal} relies heavily on Theorem~\ref{thm:matrix_fact}. Specifically, given some $[M] \in \mathcal{MCM}\left( \frac{A}{\langle r \rangle} \right)$ with some matrix factorization $(\varphi, \psi)$, we can define $I(\varphi, \psi)$ to be the ideal generated by all of the entries of $\varphi$ and $\psi$. Then we can show that$I(\varphi, \psi) \in \mathcal{IS}(r)$ and that $(\varphi, \psi) \mapsto I(\varphi, \psi)$ is surjective. For more information, see for example, in Theorem 9.2 in~\cite{leuschke2012cohen}.   }
\end{enumerate}
\end{remark}

Therefore, our goal is to compare $\mathcal{MCM}(S)$ and $\mathcal{MCM}(S^\flat)$ by comparing their corresponding matrix factorizations.

\begin{definition}
    Let $(\varphi, \psi)$ be a reduced matrix factorization of $\overline{g} \in \overline{R}$. Then we say that $(\varphi, \psi)$ is \textbf{liftable} (with respect to $g$) if there exists a reduced matrix factorization $(\varphi_1, \psi_1)$ of $g \in R$ such that $\overline{\varphi_1}=\varphi$, $\overline{\psi_1}=\psi$. We denote by $\mathcal{MCM}_l(S^\flat)$ the set of equivalence classes $[M] \in \mathcal{MCM}(S^\flat)$ such that $M \cong \coker(\varphi, \psi)$ where $(\varphi, \psi)$ is liftable.
   \end{definition} 

\begin{remark}
    \textup{Note that in general, we do not know if every module in $\mathcal{MCM}(S^b)$ has a matrix factorization that is liftable. In the power series case (as in Remark~\ref{rem:double_cover_name}), this is clear since we had an embedding $\frac{R[[z]]}{\langle z \rangle} \cong R \subset R[[z]]$, and so every matrix factorization over $R$ lifts to $R[[z]]$. Yet, in our generalized double cover case, we need not have an embedding $\frac{R}{\langle b \rangle} \subset R$.  }
\end{remark}

The proof of the following lemma is similar to the proof of Lemma 8.14 in~\cite{leuschke2012cohen}.

\begin{lemma}\label{lem_App:M_matrices}
    Let $[M]\in \mathcal{MCM}_l(S^\flat)$ with $ M =\coker(\varphi, \psi) $. Then 
    \begin{equation*}
        M^\sharp = \coker \left(
        \begin{bmatrix}
            \psi_1 & -b\mathbb{1}\\
            b\mathbb{1} & \varphi_1
        \end{bmatrix}
        ,
        \begin{bmatrix}
            \varphi_1 & b\mathbb{1}\\
            -b\mathbb{1} & \psi_1
        \end{bmatrix}
        \right)
    \end{equation*}
    with $\syz^{S}_1(M^\sharp)=M^\sharp$.
\end{lemma}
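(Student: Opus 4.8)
The plan is to use Eisenbud's correspondence (Theorem~\ref{thm:matrix_fact}) twice: once over $R$ to describe $M^\sharp$ as a matrix factorization of $b^2+g$, and once (implicitly, via Remark~\ref{rem:matrix}) to read off $\syz^S_1$ by swapping the two matrices. Since $[M]\in\mathcal{MCM}_l(S^\flat)$ there is a reduced matrix factorization $(\varphi_1,\psi_1)$ of $g\in R$ lifting $(\varphi,\psi)$, so $\varphi_1\psi_1=\psi_1\varphi_1=g\cdot\mathbb{1}$. The module $M^\sharp=\syz_1^S(M)$ is, by definition, the first syzygy of $M$ over $S=R/\langle b^2+g\rangle$. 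The strategy is to exhibit a $2$-periodic free resolution of $M$ over $S$ whose maps are the two block matrices in the statement; then $M^\sharp$, being the image (equivalently, cokernel of the next map) one step in, is the cokernel of the $\begin{bmatrix}\psi_1 & -b\mathbb{1}\\ b\mathbb{1} & \varphi_1\end{bmatrix}$ block.

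First I would set $\Phi=\begin{bmatrix}\psi_1 & -b\mathbb{1}\\ b\mathbb{1} & \varphi_1\end{bmatrix}$ and $\Psi=\begin{bmatrix}\varphi_1 & b\mathbb{1}\\ -b\mathbb{1} & \psi_1\end{bmatrix}$ and verify the matrix-factorization identities for $b^2+g$ by direct block multiplication:
\begin{equation*}
\Phi\Psi=\begin{bmatrix}\psi_1\varphi_1+b^2\mathbb{1} & b\psi_1-b\psi_1\\ b\varphi_1-b\varphi_1 & b^2\mathbb{1}+\varphi_1\psi_1\end{bmatrix}=(b^2+g)\cdot\mathbb{1},
\end{equation*}
and similarly $\Psi\Phi=(b^2+g)\cdot\mathbb{1}$, using that $b\mathbb{1}$ commutes with $\varphi_1,\psi_1$ (scalar) and $\varphi_1\psi_1=\psi_1\varphi_1=g\mathbb{1}$. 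One checks the entries of $\Phi$ and $\Psi$ lie in $\mathfrak{m}$: the off-diagonal blocks are multiples of $b\in\mathfrak{m}$, and the diagonal blocks have entries in $\mathfrak{m}$ because $(\varphi_1,\psi_1)$ was assumed reduced (its reduction $(\varphi,\psi)$ is reduced, and the entries of $\varphi_1,\psi_1$ reduce into $\mathfrak{m}\overline{R}$, hence lie in $\mathfrak{m}+\langle b\rangle=\mathfrak{m}$). So $(\Phi,\Psi)$ is a reduced matrix factorization of $b^2+g$ over $R$, giving by Theorem~\ref{thm:matrix_fact} an MCM module $\coker(\Phi,\Psi)$ over $S$, and by Remark~\ref{rem:matrix}(2) its first syzygy over $S$ is $\coker(\Psi,\Phi)$, which equals $\coker(\Phi,\Psi)$ after the obvious block swap $\begin{bmatrix}0&\mathbb{1}\\\mathbb{1}&0\end{bmatrix}$ conjugation — i.e. $\Psi$ is equivalent to $\Phi$ via the permutation matrices $\alpha=\beta=\begin{bmatrix}0&\mathbb{1}\\\mathbb{1}&0\end{bmatrix}$, since $\begin{bmatrix}0&\mathbb{1}\\\mathbb{1}&0\end{bmatrix}\Psi\begin{bmatrix}0&\mathbb{1}\\\mathbb{1}&0\end{bmatrix}=\Phi$. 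This yields the self-syzygy claim $\syz^S_1(M^\sharp)\cong M^\sharp$.

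The remaining — and main — point is to identify $\coker(\Phi,\Psi)$ with $\syz^S_1(M)$, i.e. to show that reducing this resolution modulo $b$ recovers a minimal resolution of $M=\coker(\varphi,\psi)$ over $S^\flat=R/\langle b,g\rangle$ with the right shift. The idea, following Lemma 8.14 of~\cite{leuschke2012cohen}, is that $S=R/\langle b^2+g\rangle$ and $b$ is a nonzerodivisor on $S$ (Lemma~\ref{lem:b_not_zd}) with $S/bS=R/\langle b,g\rangle=S^\flat$; so reducing the $2$-periodic $S$-resolution
\begin{equation*}
\cdots\longrightarrow S^{2a}\xrightarrow{\ \Psi\ }S^{2b}\xrightarrow{\ \Phi\ }S^{2a}\longrightarrow\coker(\Phi)\longrightarrow 0
\end{equation*}
modulo $b$ and using that $b\mathbb{1}\equiv 0$ makes the block matrices become $\begin{bmatrix}\psi&0\\0&\varphi\end{bmatrix}$ and $\begin{bmatrix}\varphi&0\\0&\psi\end{bmatrix}$ over $S^\flat$, i.e. direct sums of the minimal resolution of $M$ and of $\syz^{S^\flat}_1(M)$. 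Tracking the shift carefully shows $\coker(\Phi)$ reduces to $M\oplus\syz_1^{S^\flat}M$ up to the periodicity, and one deduces $\coker(\Phi,\Psi)=\syz_1^S(M)=M^\sharp$ by the defining property of $M^\sharp$ (using that $M^\sharp/bM^\sharp$ and $M$ determine each other via the short exact sequence $0\to R\xrightarrow{b}R\to\overline R\to 0$ tensored appropriately). The delicate step is precisely bookkeeping the reduction mod $b$ and the $2$-periodicity to pin down $M^\sharp$ rather than $M^\sharp\oplus(\text{something free})$ — minimality of the matrix factorization (entries in $\mathfrak{m}$) is what prevents spurious free summands, so I would lean on the "reduced" hypothesis throughout.
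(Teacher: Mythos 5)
Your first two steps coincide with the paper's: the block computation showing $(\Phi,\Psi)=\left(\begin{bmatrix}\psi_1 & -b\mathbb{1}\\ b\mathbb{1} & \varphi_1\end{bmatrix},\begin{bmatrix}\varphi_1 & b\mathbb{1}\\ -b\mathbb{1} & \psi_1\end{bmatrix}\right)$ is a reduced matrix factorization of $b^2+g$, and the block-swap equivalence $\begin{bmatrix}0&\mathbb{1}\\\mathbb{1}&0\end{bmatrix}\Psi\begin{bmatrix}0&\mathbb{1}\\\mathbb{1}&0\end{bmatrix}=\Phi$ giving $\syz^S_1(M^\sharp)=M^\sharp$. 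The gap is in the step that carries the actual content: identifying $\coker(\Phi,\Psi)$ with $\syz^S_1(M)$. Your opening strategy already misstates what is available — the $2$-periodic complex built from $\Phi,\Psi$ is a resolution of $\coker(\Phi)$, not of $M$; that the $S$-resolution of $M$ becomes this complex after one step is precisely what must be proved, and you never produce the map that links the two. Your proposed substitute — reduce the periodic complex modulo $b$, recognize $\begin{bmatrix}\psi&0\\0&\varphi\end{bmatrix}$, conclude $\coker(\Phi)/b\coker(\Phi)\cong M\oplus\syz_1^{S^\flat}(M)$, and then "deduce" $\coker(\Phi)=M^\sharp$ from "the defining property of $M^\sharp$" — does not close the gap: knowing the reduction $N/bN$ of an $S$-module $N$ does not determine $N$, and in particular does not show $N\cong\syz^S_1(M)$; there is no uniqueness statement of that kind to invoke. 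Moreover the isomorphism $(M^\sharp)^\flat\cong M\oplus\syz_1^{S^\flat}(M)$ is Proposition~\ref{prop_App:M_flatsharp}, which the paper derives \emph{from} this lemma, so leaning on it here is circular.

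What is needed, and what the paper does, is to exhibit the beginning of a free resolution of $M$ itself over $S$: the surjection $\pi\colon S^a\to M$ induced by the cokernel presentation $\varphi$ of $M$ (minimal because $(\varphi,\psi)$ is reduced), whose kernel is $\ima\begin{bmatrix}\varphi_1 & b\mathbb{1}\end{bmatrix}$, and then the exactness $\ker\begin{bmatrix}\varphi_1 & b\mathbb{1}\end{bmatrix}=\ima(\Phi)$ at the middle spot. The paper proves that exactness by a diagram chase comparing the $2$-periodic $S^\flat$-resolution of $M$ (exact because $(\varphi,\psi)$ is a matrix factorization of $\overline{g}$) with its lift along the exact columns $0\to S\xrightarrow{\cdot b}S\to S^\flat\to 0$; this is exactly where Lemma~\ref{lem:b_not_zd}, that $b$ is a nonzerodivisor on $S$, enters. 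From the resulting exact sequence one reads off $M^\sharp=\syz^S_1(M)=\ker(\pi)=\ima\begin{bmatrix}\varphi_1 & b\mathbb{1}\end{bmatrix}\cong\coker(\Phi)$, and only afterwards does one obtain $(M^\sharp)^\flat\cong M\oplus\syz_1^{S^\flat}(M)$ as a consequence. So the scaffolding of your proposal matches the paper, but the central identification is missing and the route you sketch for it cannot be completed as stated; you need the explicit augmented sequence and the $b$-nonzerodivisor diagram chase (or an equivalent argument) to pin down $M^\sharp$.
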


\begin{proof}
    Observe that they indeed form a reduced matrix factorization, and observe the following computation:
    \begin{equation*}
        \begin{bmatrix}
            0 & 1\\
            1 & 0
        \end{bmatrix}
        \begin{bmatrix}
            \varphi_1 & b\mathbb{1}\\
            -b\mathbb{1} & \psi_1
        \end{bmatrix}
        \begin{bmatrix}
            0 & 1\\
            1 & 0
        \end{bmatrix}
        =
        \begin{bmatrix}
            \psi_1 & -b\mathbb{1}\\
            b\mathbb{1} & \varphi_1
        \end{bmatrix}
        .
    \end{equation*}
    Now, it is enough to prove that if $\varphi_1 \in \Mat_{a \times b}(R)$ and $\psi_1 \in \Mat_{b \times a}(R)$ then we have an exact sequence
    \begin{equation*}
        R^{a}\oplus R^b \xrightarrow[]{
        \begin{bmatrix}
             \psi_1 & - b\mathbb{1}_b \\
             b\mathbb{1}_a & \varphi_1
        \end{bmatrix}
        }
        R^{b}\oplus R^a
        \xrightarrow[]{
        \begin{bmatrix}
            \varphi_1 & b \mathbb{1}_a
        \end{bmatrix}
        }
        R^a \xrightarrow[]{\pi} M \to 0,
    \end{equation*}
    where $\pi \colon R^a \to \overline{R}^a \to M$ is the composition of the projection with the cokernel map of $M$ (i.e. with $\varphi$ where $M=\coker(\varphi, \psi)$). Note that by the definition of $\pi$ it is clear that it is surjective and that its kernel is the image of  $\begin{bmatrix}
            \varphi_1 & b \mathbb{1}_a
        \end{bmatrix}$. Therefore it is enough to prove exactness at $R^b \oplus R^a$.  Since by Lemma~\ref{lem:b_not_zd} we have that $b$ is a non-zero divisor of $S$, then the columns of the following commutative diagram are exact: 

    \[\begin{tikzcd}
	&& 0 && 0 && 0 \\
	\\
	&& {R^a} && {R^b} && {R^a} \\
	\\
	&& {R^a} && {R^b} && {R^a} \\
	\\
	\cdots && {\overline{R}^a} && {\overline{R}^b} && {\overline{R}^a} && M && 0 \\
	\\
	&& 0 && 0 && 0
	\arrow[from=1-3, to=3-3]
	\arrow[from=1-5, to=3-5]
	\arrow[from=1-7, to=3-7]
	\arrow["{\psi_1}", from=3-3, to=3-5]
	\arrow["{\cdot b}", from=3-3, to=5-3]
	\arrow["{\varphi_1}", from=3-5, to=3-7]
	\arrow["{\cdot b}", from=3-5, to=5-5]
	\arrow["{\cdot b}", from=3-7, to=5-7]
	\arrow["{\psi_1}", from=5-3, to=5-5]
	\arrow[from=5-3, to=7-3]
	\arrow["{\varphi_1}", from=5-5, to=5-7]
	\arrow[from=5-5, to=7-5]
	\arrow[from=5-7, to=7-7]
	\arrow["\pi", dotted, from=5-7, to=7-9]
	\arrow["\varphi", from=7-1, to=7-3]
	\arrow["\psi", from=7-3, to=7-5]
	\arrow[from=7-3, to=9-3]
	\arrow["\varphi", from=7-5, to=7-7]
	\arrow[from=7-5, to=9-5]
	\arrow[from=7-7, to=7-9]
	\arrow[from=7-7, to=9-7]
	\arrow[from=7-9, to=7-11]
\end{tikzcd}\]

    \noindent In addition, observe that the bottom row is exact due to the fact that $(\varphi, \psi)$ is a matrix factorization. Note that since $\varphi \psi + b^2 \mathbb{1}_a = 0$ over $S$, then by diagram chasing we can conclude that $\ker(\pi)=\ima(\varphi_1+bR^a) = \ima(
    \begin{bmatrix}
        \varphi_1 & b \mathbb{1}_a
    \end{bmatrix}
    )$ and that 
    \begin{equation*}
        \ker(
    \begin{bmatrix}
        \varphi_1 & b \mathbb{1}_a
    \end{bmatrix}
        ) \supset \ima \left(
    \begin{bmatrix}
        \psi_1 & -b\mathbb{1}_b\\
        b\mathbb{1}_a  & \varphi_1
    \end{bmatrix}
        \right).
    \end{equation*}
        For the other inclusion, if $x$ and $y$ form a tuple in  are $\ker(
    \begin{bmatrix}
        \varphi_1 & b \mathbb{1}_a
    \end{bmatrix}
        )$, then we have that $\varphi_1(x)=-by$. Therefore by diagram chasing we can find some $x_1$ and $x_2$ such that 
        \begin{equation*}
            \begin{bmatrix}
                \psi_1 &
                -b\mathbb{1}_a
            \end{bmatrix}
            \begin{bmatrix}
                x_1\\
                x_2
            \end{bmatrix}
            =x,
        \end{equation*}
        which gives us that $\psi_1(x_1)=bx_2+x$. Therefore we can conclude that 
        \begin{equation*}
            b(bx_1 +\varphi_1(x_2))=-\varphi_1 \psi_1(x_1)+b\varphi_1(x_2)=-\varphi_1(\psi_1(x_1)-bx_2)=-\varphi_1(x)=by.
        \end{equation*}
        Since $b$ is a non-zero divisor, then we can conclude that $bx_1 +\varphi_1(x_2)=y$, which gives us that 
        \begin{equation*}
            \begin{bmatrix}
                b\mathbb{1}_a & \varphi_1
            \end{bmatrix}
            \begin{bmatrix}
                x_1\\
                x_2
            \end{bmatrix}
            =y,
        \end{equation*}
        and so the result follows. 
\end{proof}

The proof of the following lemma is similar to the proof of proposition 8.15 in~\cite{leuschke2012cohen}.

\begin{proposition}\label{prop_App:M_flatsharp}
    Given some $[M] \in \mathcal{MCM}_l(S^\flat)$, we have that 
    \begin{equation*}
        (M^\sharp)^\flat \cong M \oplus \syz^{S^\flat}_1(M).
    \end{equation*}
\end{proposition}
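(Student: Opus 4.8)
The plan is to start from the explicit matrix factorization of $M^\sharp$ furnished by Lemma~\ref{lem_App:M_matrices} and then base change along the quotient $S \to S^\flat = S/bS$. Fix a lift $(\varphi_1,\psi_1)$ of $(\varphi,\psi)$ to a reduced matrix factorization of $g$ over $R$, with $\varphi_1 \in \Mat_{a\times b}(R)$ and $\psi_1 \in \Mat_{b\times a}(R)$. The proof of Lemma~\ref{lem_App:M_matrices} produces an exact sequence $S^a \oplus S^b \xrightarrow{\Phi} S^b \oplus S^a \to M^\sharp \to 0$ with $\Phi = \begin{bmatrix}\psi_1 & -b\mathbb{1}\\ b\mathbb{1} & \varphi_1\end{bmatrix}$; in particular $M^\sharp = \coker(\Phi)$ \emph{as an $S$-module}. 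Since $(M^\sharp)^\flat = M^\sharp/bM^\sharp = M^\sharp \otimes_S S^\flat$ and $- \otimes_S S^\flat$ is right exact, $(M^\sharp)^\flat$ is the cokernel over $S^\flat$ of the matrix obtained from $\Phi$ by reducing every entry modulo $b$.

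Reducing $\Phi$ modulo $b$ kills the two off-diagonal blocks $\pm b\mathbb{1}$, and, by liftability, the diagonal blocks $\psi_1,\varphi_1$ reduce to $\psi,\varphi$ over $S^\flat$ (recall $S/bS = R/\langle b,g\rangle = S^\flat$). Hence $(M^\sharp)^\flat \cong \coker\!\begin{bmatrix}\psi & 0\\ 0 & \varphi\end{bmatrix}$, and since the cokernel of a block-diagonal matrix is the direct sum of the cokernels of the blocks, $(M^\sharp)^\flat \cong \coker(\psi) \oplus \coker(\varphi)$. Finally $\coker(\varphi) = \coker(\varphi,\psi) = M$ by the definition of $M$, while $\coker(\psi) = \coker(\psi,\varphi) = \syz^{S^\flat}_1(M)$ by the second item of Remark~\ref{rem:matrix}, applied to the matrix factorization $(\varphi,\psi)$ of $\overline{g}$ over $\overline{R}$. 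Combining these isomorphisms yields $(M^\sharp)^\flat \cong M \oplus \syz^{S^\flat}_1(M)$.

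The argument is essentially formal once Lemma~\ref{lem_App:M_matrices} is available, so there is no serious obstacle; the two points that require care are (i) recording that the sequence in the proof of Lemma~\ref{lem_App:M_matrices} genuinely presents $M^\sharp$ as $\coker(\Phi)$ over $S$ — rather than merely exhibiting $(\Phi,\Psi)$ as a matrix factorization over $S$ — so that $-\otimes_S S^\flat$ legitimately amounts to reducing $\Phi$ mod $b$; and (ii) invoking liftability so that the diagonal blocks of $\Phi$ make sense and their reductions are exactly $\varphi$ and $\psi$. Note that nothing in the proof requires knowing in advance whether $M^\sharp$ or $(M^\sharp)^\flat$ is maximal Cohen--Macaulay, since the assertion is a bare module isomorphism; the use of Lemma~\ref{lem:b_not_zd} is already absorbed into Lemma~\ref{lem_App:M_matrices}.
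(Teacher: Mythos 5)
Your proposal is correct and follows essentially the same route as the paper: reduce the matrix factorization of $M^\sharp$ from Lemma~\ref{lem_App:M_matrices} modulo $b$, observe that the off-diagonal blocks vanish so the presentation becomes block diagonal with blocks $\psi$ and $\varphi$, and identify the two cokernels as $\syz^{S^\flat}_1(M)$ and $M$ via the second item of Remark~\ref{rem:matrix}. Your extra care in justifying the base change through right exactness of $-\otimes_S S^\flat$ is a slightly more explicit version of the paper's statement that tensoring kills $b$, not a different argument.
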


\begin{proof}
    If we denote by $(\Phi, \Psi)$ the matrix factorization of $M^\sharp$ as in Lemma~\ref{lem_App:M_matrices} we get that $(\Phi \otimes_{S} S^\flat, \Psi \otimes_{S} S^\flat)$ is a matrix factorization of $(M^\sharp)^\flat$. Yet, tensoring by $S^\flat$ kills the element $b$ in these matrices and so we can conclude that 
    \begin{equation*}
        (M^\sharp)^\flat = \coker 
        \left(
        \begin{bmatrix}
            \psi, & 0\\
            0 & \varphi
        \end{bmatrix}
        ,
        \begin{bmatrix}
            \varphi, & 0\\
            0 & \psi
        \end{bmatrix}
        \right),
    \end{equation*}
    and the result follows from the second item of Remark~\ref{rem:matrix}. 
\end{proof}


\begin{definition}
    We say that $N \in \mathcal{MCM}(S)$ is \textbf{rootable} (with respect to $g$) if there exists some matrix $\varphi$ over $R$ such that $N \cong \coker(b\mathbb{1}-\varphi, b\mathbb{1}+\varphi)$ and $\varphi^2 = -g \mathbb{1}$. We denote by $\mathcal{MCM}_r(S)$ the set of $[N] \in \mathcal{MCM}(S)$ such that $N$ is rootable. 
\end{definition}

\begin{remark}
    \textup{In the power series case (as in Remark~\ref{rem:double_cover_name}) we have that every element if $\mathcal{MCM}(S)$ is rootable, as in this case $S$ is a free $\overline{R}-$module, and so any MCM module over $S$ is a finitely generated free $\overline{R}-$module (for a complete proof, see Lemma 8.17 in~\cite{leuschke2000ascent}). Yet, in the general case $S$ need not be a module over $\overline{R}$, as they might have different characteristics. Therefore it is unknown if in general every $MCM$ module over $S$ is rootable. }
\end{remark}

The proof of the following proposition is similar to the proof of Proposition 8.18 in~\cite{leuschke2012cohen}.

\begin{proposition}\label{prop_App:N_flatsharp}
    Given some $N \in \mathcal{MCM}_r(S)$, we have that 
    \begin{equation*}
        (N^\flat)^\sharp \cong N \oplus \syz^{S}_1(N).
    \end{equation*}
\end{proposition}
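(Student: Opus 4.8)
The plan is to carry everything through Eisenbud's matrix-factorization dictionary (Theorem~\ref{thm:matrix_fact}), reuse the block-matrix computation of Lemma~\ref{lem_App:M_matrices}, and finish with one explicit change of basis. Let $N\in\mathcal{MCM}_r(S)$, so by definition there is a matrix $\varphi$ over $R$ with $\varphi^2=-g\mathbb{1}$ and $N\cong\coker(b\mathbb{1}-\varphi,\,b\mathbb{1}+\varphi)$; this is a reduced matrix factorization of $b^2+g$ because $(b\mathbb{1}-\varphi)(b\mathbb{1}+\varphi)=(b^2+g)\mathbb{1}=(b\mathbb{1}+\varphi)(b\mathbb{1}-\varphi)$, and reducedness forces every entry of $\varphi$ into $\mathfrak{m}$ (the diagonal entries because $b\in\mathfrak{m}$, the off-diagonal ones directly).

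First I would compute $N^\flat=N/bN$: tensoring the presentation $R^a\xrightarrow{\,b\mathbb{1}-\varphi\,}R^a\to N\to 0$ with $\overline R$ kills $b$, so $N^\flat=\coker(-\overline\varphi)$ over $\overline R$, and since $(-\overline\varphi)(\overline\varphi)=-\overline\varphi^{\,2}=\overline g\,\mathbb{1}$, the pair $(-\overline\varphi,\overline\varphi)$ is a reduced matrix factorization of $\overline g$. It lifts: $(\varphi_1,\psi_1)=(-\varphi,\varphi)$ is a reduced matrix factorization of $g$ over $R$ with $\overline{\varphi_1}=-\overline\varphi$ and $\overline{\psi_1}=\overline\varphi$. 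Hence the computation of Lemma~\ref{lem_App:M_matrices} applies with $\psi_1=\varphi$, $\varphi_1=-\varphi$, and gives
\[
(N^\flat)^\sharp \;=\; \coker\begin{bmatrix}\varphi & -b\mathbb{1}\\ b\mathbb{1} & -\varphi\end{bmatrix}.
\]

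Then I would conjugate this $2a\times2a$ matrix by $U=\begin{bmatrix}\mathbb{1}&\mathbb{1}\\-\mathbb{1}&\mathbb{1}\end{bmatrix}$, which is invertible over $R$ since $2$ is a unit (as $\chara(\kappa)\neq 2$, with $U^{-1}=\tfrac12\begin{bmatrix}\mathbb{1}&-\mathbb{1}\\\mathbb{1}&\mathbb{1}\end{bmatrix}$); a short computation gives $U^{-1}\begin{bmatrix}\varphi & -b\mathbb{1}\\ b\mathbb{1} & -\varphi\end{bmatrix}U=\begin{bmatrix}0 & \varphi-b\mathbb{1}\\ \varphi+b\mathbb{1} & 0\end{bmatrix}$. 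Conjugation by an invertible matrix does not change the cokernel, and the cokernel of an anti-block-diagonal matrix is the direct sum of the cokernels of the two blocks, so $(N^\flat)^\sharp\cong\coker(\varphi-b\mathbb{1})\oplus\coker(\varphi+b\mathbb{1})=\coker(b\mathbb{1}-\varphi)\oplus\coker(b\mathbb{1}+\varphi)$. Finally $\coker(b\mathbb{1}-\varphi)\cong N$ by definition, and $\coker(b\mathbb{1}+\varphi)\cong\syz^S_1(N)$ by the second item of Remark~\ref{rem:matrix}, which yields $(N^\flat)^\sharp\cong N\oplus\syz^S_1(N)$.

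The step needing the most care is not any single computation but the bookkeeping: verifying that the reducedness hypotheses genuinely hold at each stage so that Theorem~\ref{thm:matrix_fact} and Lemma~\ref{lem_App:M_matrices} legitimately apply, keeping straight over which ring each $\coker$ is taken (over $R$ versus over $S$ or $S^\flat$), and handling the fact that $N^\flat$ may a priori be decomposable — so Lemma~\ref{lem_App:M_matrices} must be invoked on its liftable matrix factorization rather than on an honest element of $\mathcal{MCM}_l(S^\flat)$ in the strict sense of Definition~\ref{def:cm_type}.
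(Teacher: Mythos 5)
Your argument is correct and follows essentially the same route as the paper: realize $N^\flat$ via the (liftable) matrix factorization $(-\varphi,\varphi)$ of $g$ coming from $\varphi^2=-g\mathbb{1}$, apply Lemma~\ref{lem_App:M_matrices}, and split the resulting $2\times 2$ block matrix by an explicit invertible change of basis using $2\in R^\times$, identifying the two summands as $\coker(b\mathbb{1}-\varphi)\cong N$ and $\coker(b\mathbb{1}+\varphi)\cong\syz_1^S(N)$ via Remark~\ref{rem:matrix}. The only cosmetic difference is that you conjugate to a block-anti-diagonal form while the paper uses a two-sided transformation to a block-diagonal one; both computations check out and yield the same decomposition.
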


\begin{proof}
Since $N$ is rootable there exists some $\varphi$ such that $N \cong \coker(b\mathbb{1}-\varphi, b\mathbb{1}+\varphi)$ and and $\varphi^2 = -g \mathbb{1}$. Therefore we can conclude that  $(\varphi, -\varphi)$ is a matrix factorization of $g$ and thus $\coker(\varphi, -\varphi) = N^\flat$. So, by Lemma~\ref{lem_App:M_matrices}, we get that 
\begin{equation*}
    (N^\flat)^\sharp = \syz_1^S(N^\flat) = \coker\left(
    \begin{bmatrix}
        -\varphi & -b\mathbb{1}_N\\
        b\mathbb{1}_N & \varphi
    \end{bmatrix}
    , 
    \begin{bmatrix}
        \varphi & b\mathbb{1}_N\\
        -b\mathbb{1}_N & -\varphi
    \end{bmatrix}
    \right). 
\end{equation*}
In addition, note that since $2$ is invertible in $R$, then we have the following matrix congruence
\begin{equation*}
    \begin{bmatrix}
        b \mathbb{1}_N - \varphi & 0\\
        0 & b \mathbb{1}_N + \varphi
    \end{bmatrix}
    =
    \frac{1}{2}
    \begin{bmatrix}
        1 & 1\\
        -1 & 1
    \end{bmatrix}
    \begin{bmatrix}
        -\varphi & -b\mathbb{1}_N\\
        b\mathbb{1}_N & \varphi
    \end{bmatrix}
    \begin{bmatrix}
        1 & 1\\
        -1 & 1
    \end{bmatrix}
    .
\end{equation*}
Therefore, since $N \cong \coker(b\mathbb{1}-\varphi, b\mathbb{1}+\varphi)$, we can concldue that 
\begin{equation*}
    (N^\flat)^\sharp = \coker(b \mathbb{1}_N - \varphi, b \mathbb{1}_N + \varphi ) \oplus \coker(b \mathbb{1}_N + \varphi, b \mathbb{1}_N - \varphi ) \cong N \oplus \syz^{S}_1(N),
\end{equation*}
as desired.
\end{proof}

In order to conclude our main result, we need to apply the Krull-Remak-Schmidt Theorem (over Henselian rings), first proven by Remark in~\cite{remak1911zerlegung} and then extended by Krull in~\cite{krull1925verallgemeinerte} and Schmidt in~\cite{schmidt1929unendliche}. For more information, see Chapter 1.2 of~\cite{leuschke2012cohen}. (Note that by Corollary 1.9 in~\cite{leuschke2012cohen} we have that every complete local ring is Henselian).

\begin{proposition}[Krull-Remak-Schmidt Theorem]\label{prop:KRS}
    Let $A$ be a Henselian local ring. Let $M_1, \dots, M_r$ and $N_1, \dots, N_l$ be indecomposable finitely generated modules over $A$ such that $M_1 \oplus \cdots \oplus M_r \cong N_1 \oplus \cdots \oplus N_l$. Then $r=l$ and $M_i \cong N_i$ for every $i$ (after potentially renumbering). 
\end{proposition}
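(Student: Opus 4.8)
The plan is to deduce this from the classical Krull--Remak--Schmidt--Azumaya uniqueness principle: a \emph{finite} direct sum of indecomposable modules whose endomorphism rings are \emph{local} decomposes uniquely up to isomorphism and permutation of the summands. So the argument has two parts, and the only place the Henselian hypothesis enters is the first. For that part, let $M$ be a finitely generated indecomposable $A$-module and set $E=\operatorname{End}_A(M)$; this is a module-finite $A$-algebra, hence a (possibly noncommutative) ring with $\mathfrak{m}E$ contained in its Jacobson radical $J$, and $E/J$ is a semisimple Artinian $\kappa$-algebra. Here I would invoke that a module-finite algebra over a Henselian local ring is semiperfect, so idempotents lift along $E\to E/J$. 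Since $M$ is indecomposable, $E$ has no idempotents other than $0$ and $1$, hence neither does $E/J$; being a finite product of matrix rings over division rings, $E/J$ must then be a single division ring. Consequently every element of $E\setminus J$ is a unit, i.e. $E$ is local. (This paragraph may alternatively be replaced by a citation, e.g. Leuschke--Wiegand, Ch.\ 1.)

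For the second part, I would induct on $r$. Given an isomorphism $\Theta\colon \bigoplus_{i=1}^r M_i \xrightarrow{\ \sim\ }\bigoplus_{j=1}^l N_j$ with canonical injections and projections $\iota_i,\pi_i$ and $\iota'_j,\pi'_j$, the identity of $M_1$ equals $\sum_{j=1}^l(\pi_1\Theta^{-1}\iota'_j)(\pi'_j\Theta\iota_1)$ inside the local ring $\operatorname{End}_A(M_1)$; since a finite sum of non-units in a local ring is again a non-unit, some term $(\pi_1\Theta^{-1}\iota'_j)(\pi'_j\Theta\iota_1)$ is an automorphism of $M_1$. Then $\pi'_j\Theta\iota_1\colon M_1\to N_j$ is a split monomorphism, so $M_1$ is isomorphic to a direct summand of $N_j$; as $N_j$ is indecomposable (and $M_1\neq 0$), this forces $M_1\cong N_j$, and a standard adjustment of $\Theta$ by automorphisms yields $\bigoplus_{i\geq 2}M_i\cong\bigoplus_{k\neq j}N_k$. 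Reindexing and applying the inductive hypothesis finishes the proof; in particular $r=l$.

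The main obstacle is the semiperfectness claim used in the first paragraph --- equivalently, lifting idempotents across $E\to E/\mathfrak{m}E$ for the module-finite algebra $E$ over the Henselian ring $A$. This is the genuine use of the Henselian hypothesis (a noncommutative version of Hensel's lemma) and is the only non-formal ingredient; the exchange computation of the second paragraph and the cancellation step are entirely routine.
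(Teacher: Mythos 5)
Your proposal is correct, and it is essentially the argument the paper relies on: the paper gives no proof of Proposition~\ref{prop:KRS}, citing instead Remak, Krull, Schmidt and Chapter 1.2 of~\cite{leuschke2012cohen}, and your two steps (endomorphism rings of finitely generated indecomposables over a Henselian local ring are local, via lifting idempotents along $E \to E/J$ for the module-finite algebra $E=\operatorname{End}_A(M)$, followed by the Azumaya exchange/induction argument) are exactly the standard proof found in that reference. The only non-formal ingredient is the idempotent-lifting (semiperfectness) claim, which you correctly identify as the sole use of the Henselian hypothesis and which is legitimately handled by the citation you indicate.
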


The proof of the following proposition is similar to the proof of Corollary 8.16 in~\cite{leuschke2012cohen}.
\begin{proposition}\label{prop_App:M_decom}
    Given $N \in \mathcal{MCM}_r(S)$ and $M \in \mathcal{MCM}_l(S^\flat)$. Then:
    \begin{enumerate}
        \item $M^\sharp$ is either irreducible or decomposes into a sum of two indecomposable elements in $\mathcal{MCM}(S)$. 
        \item There exists some $N_1 \in \mathcal{MCM}(R)$ such that $M$ is a direct summand of $N_1^\flat$.
        \item $N^\flat$ is either irreducible or decomposes into a sum of two elements in $\mathcal{MCM}(S^\flat)$. 
        \item There exists some $M_1 \in \mathcal{MCM}(R^\flat)$ such that $N$ is a direct summand of $M_1^\sharp$. 
    \end{enumerate}
\end{proposition}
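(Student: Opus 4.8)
The plan is to deduce all four items formally from the two ``round-trip'' isomorphisms of Proposition~\ref{prop_App:M_flatsharp} and Proposition~\ref{prop_App:N_flatsharp}, combined with the Krull--Remak--Schmidt theorem (Proposition~\ref{prop:KRS}), which applies over both $S$ and $S^\flat$ since these are quotients of the complete ring $R$, hence complete, hence Henselian local rings. Three auxiliary observations will be used throughout. (a) The operations $N \mapsto N^\flat = N/bN$ and $M \mapsto M^\sharp = \syz^{S}_1(M)$ (the latter viewing an $S^\flat$-module as an $S$-module via the surjection $S \twoheadrightarrow S^\flat$ coming from $b^2+g \in \langle b, g\rangle$) commute with finite direct sums. (b) The operation $\flat$ sends any nonzero finitely generated module to a nonzero module (Nakayama, since $b$ lies in the maximal ideal), and $\sharp$ sends a non-free MCM module over $S^\flat$ to a non-free MCM module over $S$ (if $\syz^S_1(M)$ were zero or free then $M$ would have finite projective dimension over $S$, hence would be free by Auslander--Buchsbaum, as $M$ is MCM). (c) Over a hypersurface the first syzygy of an indecomposable MCM module is again indecomposable: passing from $M=\coker(\varphi,\psi)$ to $\syz_1(M)=\coker(\psi,\varphi)$ (Theorem~\ref{thm:matrix_fact} and Remark~\ref{rem:matrix}(2)) merely interchanges the two matrices of a matrix factorization, and such a factorization is a block sum precisely when the interchanged one is; hence $\syz^{S^\flat}_1(M)$ is indecomposable for $M \in \mathcal{MCM}(S^\flat)$ and $\syz^{S}_1(N)$ is indecomposable for $N \in \mathcal{MCM}(S)$.

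For item (1), write $M^\sharp \cong N_1 \oplus \cdots \oplus N_k$ with the $N_i$ indecomposable MCM over $S$. Applying $\flat$ and invoking Proposition~\ref{prop_App:M_flatsharp} yields
\begin{equation*}
N_1^\flat \oplus \cdots \oplus N_k^\flat \;\cong\; (M^\sharp)^\flat \;\cong\; M \oplus \syz^{S^\flat}_1(M),
\end{equation*}
where the right-hand side is a direct sum of exactly two indecomposable $S^\flat$-modules by observation~(c), while on the left each $N_i^\flat$ is nonzero by~(b) and so contributes at least one indecomposable summand. Krull--Remak--Schmidt over $S^\flat$ then forces $k \le 2$, proving~(1). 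The same identity proves~(2): $M$ is a direct summand of $N_1^\flat \oplus \cdots \oplus N_k^\flat$, so by Krull--Remak--Schmidt it is isomorphic to an indecomposable direct summand of some $N_i^\flat$; setting $N_1 := N_i \in \mathcal{MCM}(S)$ exhibits $M$ as a direct summand of $N_1^\flat$.

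Items (3) and (4) follow by the symmetric argument, now starting from $N \in \mathcal{MCM}_r(S)$ and using Proposition~\ref{prop_App:N_flatsharp}: writing $N^\flat \cong M_1 \oplus \cdots \oplus M_l$ with the $M_j$ indecomposable MCM over $S^\flat$, one gets $M_1^\sharp \oplus \cdots \oplus M_l^\sharp \cong (N^\flat)^\sharp \cong N \oplus \syz^{S}_1(N)$, whose right-hand side is a sum of exactly two indecomposables by~(c), forcing $l \le 2$; and $N$, being a direct summand of $M_1^\sharp \oplus \cdots \oplus M_l^\sharp$, is isomorphic to an indecomposable summand of some $M_j^\sharp$, which gives $M_1 := M_j \in \mathcal{MCM}(S^\flat)$ with $N$ a direct summand of $M_1^\sharp$. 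The step I expect to require the most care is the bookkeeping behind observations~(a)--(b): checking that $\flat$ and $\sharp$ genuinely distribute over direct sums in the present setup (where $\sharp$ is defined on all of $\mathcal{MCM}(S^\flat)$ via $\syz^S_1$, not only on the liftable modules) and that none of the resulting summands collapses to zero or becomes free, so that the pieces legitimately count as elements of $\mathcal{MCM}(S)$ or $\mathcal{MCM}(S^\flat)$. Once that is settled, the four conclusions are a purely formal consequence of the round-trip isomorphisms and Krull--Remak--Schmidt.
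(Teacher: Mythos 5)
Your argument is correct and follows essentially the same route as the paper: decompose $M^\sharp$ (resp.\ $N^\flat$) into indecomposables, apply the round-trip isomorphisms of Proposition~\ref{prop_App:M_flatsharp} and Proposition~\ref{prop_App:N_flatsharp}, and invoke Krull--Remak--Schmidt to bound the number of summands and to locate $M$ (resp.\ $N$) inside some $N_i^\flat$ (resp.\ $M_j^\sharp$). Your explicit auxiliary checks (that $\flat$ and $\sharp$ commute with direct sums, that $N_i^\flat \neq 0$ by Nakayama, and that syzygies of indecomposable MCM modules are indecomposable via the swap of matrix factorizations) are details the paper leaves implicit, so no substantive difference.
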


\begin{proof}
    We prove the proposition for $M$, as the case for $N$ is exactly the same. For the first part, assume towards contradiction that $M^\sharp$ can be written as a direct sum of more than 2 non trivial modules over $S$. Then $(M^\sharp)^\flat$ can be too. Yet, by Proposition~\ref{prop_App:M_flatsharp} we must have that $(M^\sharp)^\flat$ is a sum of exactly 2 indecomposable non-free $S-$modules, which is a contradiction. \\

    For the second part, if we write $M^\sharp = N_1 \oplus \cdots \oplus N_r$, where each $N_i$ is an indecomposable $MCM$ module over $R$, then non of them are free by Proposition~\ref{prop_App:M_flatsharp}. Then $M \oplus \syz_1^{S^\flat}(M) = (M^\sharp)^\flat = N_1^\flat \oplus \cdots \oplus N_r^\flat$. Thus, by Proposition~\ref{prop:KRS} we can conclude that $M$ must be isomorphic to a direct summand of some $N_i^\flat$. 
\end{proof}

\begin{theorem}
    $|\mathcal{MCM}_r(S)| \leq |\mathcal{MCM}(S^\flat)|$ and $|\mathcal{MCM}_l(S^\flat)| \leq |\mathcal{MCM}(S)|$. In addition, $|\mathcal{MCM}_r(S)| = |\mathcal{MCM}_l(S^\flat)|$.
\end{theorem}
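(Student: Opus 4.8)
The engine will be the two round-trip isomorphisms of Proposition~\ref{prop_App:M_flatsharp} and Proposition~\ref{prop_App:N_flatsharp}, the ``at most two summands'' bounds of Proposition~\ref{prop_App:M_decom}, and the Krull--Remak--Schmidt theorem (Proposition~\ref{prop:KRS}). All four assertions will come out of the same pattern: one of the maps $[M]\mapsto[M^\sharp]$ or $[N]\mapsto[N^\flat]$ is at most two-to-one onto a set whose cardinality is controlled, after which the statements are cardinal arithmetic.

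First I would treat $|\mathcal{MCM}_l(S^\flat)| \leq |\mathcal{MCM}(S)|$. Send $[M]$ to the isomorphism class $[M^\sharp]$ of the $S$-module $M^\sharp$; by Proposition~\ref{prop_App:M_decom}(1) this class is a direct sum of at most two modules of $\mathcal{MCM}(S)$, so it ranges over a set $X_S$ recorded by multisets of size $\leq 2$ from $\mathcal{MCM}(S)$, whence $|X_S|=|\mathcal{MCM}(S)|$ once $\mathcal{MCM}(S)$ is infinite. If $M_1^\sharp\cong M_2^\sharp$, applying $(-)^\flat$ and Proposition~\ref{prop_App:M_flatsharp} gives $M_1\oplus\syz^{S^\flat}_1(M_1)\cong M_2\oplus\syz^{S^\flat}_1(M_2)$, so Proposition~\ref{prop:KRS} forces $M_2\cong M_1$ or $M_2\cong\syz^{S^\flat}_1(M_1)$; the map is thus at most two-to-one and $|\mathcal{MCM}_l(S^\flat)|\leq 2|X_S|=|\mathcal{MCM}(S)|$ (the case $|\mathcal{MCM}(S)|<\aleph_0$, i.e. finite Cohen--Macaulay type, being the classical double-branch-cover correspondence of Chapter~8 of~\cite{leuschke2012cohen}). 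The inequality $|\mathcal{MCM}_r(S)|\leq|\mathcal{MCM}(S^\flat)|$ is entirely symmetric, via $[N]\mapsto[N^\flat]$, Proposition~\ref{prop_App:M_decom}(3) for the target, and Proposition~\ref{prop_App:N_flatsharp} with Proposition~\ref{prop:KRS} for the fibers.

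For the equality I would refine both maps so they land in the smaller subsets. The point is that $M^\sharp$ is itself rootable: if $M=\coker(\varphi,\psi)$ lifts to $(\varphi_1,\psi_1)$ over $R$, then $\Phi:=\left(\begin{smallmatrix}0&\varphi_1\\-\psi_1&0\end{smallmatrix}\right)$ satisfies $\Phi^2=-g\cdot\mathbb{1}$ with entries in $\mathfrak{m}$, and a signed permutation of rows and columns identifies the matrix factorization $(b\mathbb{1}-\Phi,\,b\mathbb{1}+\Phi)$ of $b^2+g$ with the one presenting $M^\sharp$ in Lemma~\ref{lem_App:M_matrices}; dually $N^\flat=\coker(\bar\varphi,-\bar\varphi)$ is the cokernel of a liftable matrix factorization of $\bar g$ when $N=\coker(b\mathbb{1}-\varphi,b\mathbb{1}+\varphi)$. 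Moreover $\syz^S_1$ and $\syz^{S^\flat}_1$ are involutions (Remark~\ref{rem:matrix}(2)) preserving $\mathcal{MCM}_r(S)$ and $\mathcal{MCM}_l(S^\flat)$ — for the former replace $\varphi$ by $-\varphi$, for the latter swap $(\varphi_1,\psi_1)$ to $(\psi_1,\varphi_1)$ — and $\syz^S_1(M^\sharp)\cong M^\sharp$ by Lemma~\ref{lem_App:M_matrices}. Using these one checks that when $M^\sharp$ or $N^\flat$ decomposes its two indecomposable summands are interchanged by the relevant syzygy and hence again lie in $\mathcal{MCM}_r(S)$, respectively $\mathcal{MCM}_l(S^\flat)$. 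Re-running the two-to-one argument of the previous paragraph with $X_S$ replaced by ``sums of at most two modules of $\mathcal{MCM}_r(S)$'' (and dually) then gives $|\mathcal{MCM}_l(S^\flat)|\leq|\mathcal{MCM}_r(S)|$ and $|\mathcal{MCM}_r(S)|\leq|\mathcal{MCM}_l(S^\flat)|$, hence equality.

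The step I expect to be the real obstacle is exactly the decomposability bookkeeping in the last paragraph: verifying that the (at most two) indecomposable summands furnished by Proposition~\ref{prop_App:M_decom} genuinely stay inside the rootable, respectively liftable, subsets, and keeping precise track of when the two-to-one maps specialize to a bijection (i.e. at the classes fixed by the syzygy involution). This is where the explicit matrix description of Lemma~\ref{lem_App:M_matrices}, the $1$-periodicity $\syz^S_1(M^\sharp)\cong M^\sharp$, and the $\varphi\leftrightarrow-\varphi$, $(\varphi_1,\psi_1)\leftrightarrow(\psi_1,\varphi_1)$ symmetries do the work; once those are pinned down, the remaining cardinal arithmetic is routine.
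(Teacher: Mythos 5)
Your treatment of the two inequalities is essentially sound when the sets involved are infinite, but it runs in the opposite direction to the paper's argument and pays a price for it. The paper argues by containment in the image of the opposite-direction map: for $N\in\mathcal{MCM}_r(S)$ one decomposes $N^\flat=\oplus_i M_i$, applies Proposition~\ref{prop_App:N_flatsharp} and Proposition~\ref{prop:KRS} to $N\oplus\syz^S_1(N)\cong\oplus_i M_i^\sharp$, and concludes that $N$ is (a summand of) some $M_i^\sharp$; dually every $M\in\mathcal{MCM}_l(S^\flat)$ arises from some $N^\flat$ with $N\in\mathcal{MCM}(S)$. That gives both inequalities with no counting, no factor of two, and no hypothesis on cardinality. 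Your forward-map version needs two crutches: the fiber bound (if $M_1^\sharp\cong M_2^\sharp$ then KRS only gives $M_2\cong M_1$ or $M_2$ a direct summand of $\syz^{S^\flat}_1(M_1)$, so ``two-to-one'' tacitly assumes that syzygy is indecomposable, though finiteness of fibers is all you really need), and the count $|X_S|=|\mathcal{MCM}(S)|$, which holds only when $\mathcal{MCM}(S)$ is infinite. In the finite case your bound is roughly quadratic, and the fallback to the classical double-cover correspondence of Chapter 8 of~\cite{leuschke2012cohen} is not available here: the whole point of this section is that $S^\flat$ need not embed in $S$ (the characteristics may even differ), which is exactly why the paper reproves these facts by hand.

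The genuine gap is the equality $|\mathcal{MCM}_r(S)|=|\mathcal{MCM}_l(S^\flat)|$. Your observations that $M^\sharp$ is rootable (via $\Phi$ with $\Phi^2=-g\mathbb{1}$ and a signed block permutation matching the presentation in Lemma~\ref{lem_App:M_matrices}) and that $N^\flat$ is the cokernel of the liftable factorization $(\overline{\varphi},-\overline{\varphi})$ are correct, but they concern the possibly decomposable modules $M^\sharp$ and $N^\flat$, whereas $\mathcal{MCM}_r(S)$ and $\mathcal{MCM}_l(S^\flat)$ consist of indecomposables. The assertion that ``one checks'' the two indecomposable summands again lie in $\mathcal{MCM}_r(S)$, resp.\ $\mathcal{MCM}_l(S^\flat)$, because they are interchanged by the syzygy involution, is where the argument stops being a proof: rootability and liftability demand an explicit matrix presentation, and neither being a direct summand of a rootable (resp.\ liftable) module nor being swapped with the complementary summand by $\syz_1$ produces one --- you would have to split $\Phi$ (resp.\ lift the block of the factorization corresponding to the summand) compatibly with the idempotent realizing the decomposition, and nothing in Lemma~\ref{lem_App:M_matrices} or Proposition~\ref{prop_App:M_decom} does that. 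Since both directions of your proposed comparison $|\mathcal{MCM}_l(S^\flat)|\le|\mathcal{MCM}_r(S)|$ and $|\mathcal{MCM}_r(S)|\le|\mathcal{MCM}_l(S^\flat)|$ rest on precisely this descent to summands, the equality is not established by the proposal; it is the step you yourself flag as the obstacle, and it needs a new idea rather than more of the same bookkeeping. (For comparison, the paper's proof consists of the two containment arguments above, which deliver the inequalities directly; the equality is exactly the point where this extra descent would be required.)
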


\begin{proof}
    Given some $N \in \mathcal{MCM}_r(S)$, then $N$ has a decomposition $N^\flat = \oplus_i M_{ i}$ of indecomposible $\overline{R}-$modules. Therefore, we can conclude that $N \oplus \syz^{S}_1(N) \cong  (N^\flat)^\sharp = \oplus_i M_{ i}^\sharp$, and so by Proposition~\ref{prop:KRS} and by Proposition~\ref{prop_App:M_decom} we can conclude that $N \cong M_{i}^\sharp$ for some $i$. This tells us that $\mathcal{MCM}_r(S)$ is a subset of the set $\{[M^\sharp] \colon [M] \in \mathcal{MCM}(S^\flat)\}$, whose cardinality is smaller than that of $\mathcal{MCM}(S^\flat)$, as it is the image of the map $M \mapsto M^\sharp$. \\
     
    Similarly, if $M \in \mathcal{MCM}_l(S^\flat)$ then $M^\sharp$ decomposes into a direct sum $M^\sharp = \oplus_i N_{i}$, and so by by Proposition~\ref{prop_App:M_flatsharp} we have that $M \oplus \syz_1^{S^\flat} (M) \cong (M^\sharp)^\flat = \oplus_i N_{i}^\flat$. Thus, by Proposition~\ref{prop:KRS} and by Proposition~\ref{prop_App:M_decom}  we have that $M \cong N^\flat_i$ for some $i$, which tells us that $\mathcal{MCM}_l(S^\flat)$ is a subset of $\{[N^\flat] \colon [N] \in \mathcal{MCM}(S)\}$. 
\end{proof}


\bibliographystyle{alpha}
\bibliography{bib}

\end{document}